\documentclass[reqno, 11pt,epsfig,amsfonts]{amsart}
\usepackage{amssymb, amsmath, amsthm}

\usepackage[latin1]{inputenc} % 必须加这个，否则投稿编译不成功
\usepackage{listings, tcolorbox}% 必须加这个，否则投稿编译不成功

\usepackage{tocvsec2}
\usepackage{color}
\usepackage{epsfig}
\usepackage{amsmath}
\usepackage{amssymb}
\usepackage{amscd}
\usepackage{graphicx}
\usepackage{hyperref}
\usepackage{cleveref}
\usepackage{subfigure}
\usepackage{mathrsfs}
\usepackage[T1]{fontenc}
\usepackage{tikz}
\usepackage{floatrow}
\usepackage{float}
\usepackage{pifont,bm}
\usepackage{indentfirst}
\usepackage{enumerate}
\usepackage{amsmath}

\numberwithin{equation}{section}
\newtheorem{lemma}{Lemma}[section]
\newtheorem{theorem}{Theorem}
\newtheorem{re}{Remark}[section]

\newtheorem{prop}{Proposition}[section]

   \newtheorem{definition}{Definition}[section]

\usepackage{epsfig}

\numberwithin{equation}{section}
\numberwithin{theorem}{section}
\numberwithin{prop}{section}
\numberwithin{lemma}{section}
\numberwithin{re}{section}
\numberwithin{coro}{section}
\theoremstyle{definition}

\subjclass[2020]{37K10, 35Q35, 35Q51}

\keywords{Integrable system, modified Camassa--Holm equation, Stability, $N$-soliton solutions, Recursion operator}

\thanks{*Corresponding author (sftian@cumt.edu.cn, shoufu2006@126.com).}
\thanks{$^\dagger$Contributed equally as the first author.}
\begin{document}

\title[Stability of $N$-solitons for the mCH equation]{Stability of $N$-soliton solutions for the modified Camassa--Holm equation}

%\today

\author[Lu]{Zhen Lu}
\author[Tang]{Fei Tang$^{\dagger}$}
\author[Tian]{Shou-Fu Tian$^{*,\dagger}$}
\address{Zhen Lu, Fei Tang, Shou-Fu Tian (Corresponding author)\newline
School of Mathematics, China University of Mining and Technology, Xuzhou 221116, People's Republic of China}
\email{sftian@cumt.edu.cn, shoufu2006@126.com}

\begin{abstract}
In this work, we address the stability of $N$-soliton solutions to the completely integrable modified Camassa--Holm (mCH) equation. Recently, Li, Liu, and Zhu (Math. Ann. 392 (2025), 899--932) established the orbital stability of 2-soliton solutions in $H^4(\mathbb{R})$ with respect to the solution $u$ and highlighted the stability of mCH $N$-soliton solutions remains an urgent challenge. Motivated by their work, we systematically investigate the stability of mCH $N$-solitons. We first employ the bi-Hamiltonian structure of mCH to construct a novel hierarchy of explicit conservation laws with well-defined regularity domains. Then by formulating an appropriate Lyapunov functional, we apply the Inverse Scattering Transform to conduct a rigorous spectral analysis on the recursion operators. Finally, we demonstrate that the mCH $N$-solitons are non-isolated constrained minimizers of a variational problem. Our analysis proves that the $N$-soliton solutions of the mCH equation are both dynamically and orbitally stable in $H^{N+1}(\mathbb{R})$. Notably, when reduced to the 2-soliton case, our framework establishes stability in $H^3(\mathbb{R})$, which improves upon the existing regularity threshold.
\end{abstract}

\maketitle
%\tableofcontents

\section{Introduction}\label{sec.1}
In this work, we consider the stability of $N$-soliton solutions for the modified Camassa--Holm (mCH) equation \cite{ABFS, Fo, Fu}
\begin{align}\label{mch}
	{m}_{t} + {\left( ( {u}^{2} - {u}_{x}^{2}) m\right) }_{x} + \gamma {u}_{x} = 0,\quad t > 0,\quad x \in \mathbb{R},
\end{align}
where $u(t,x)$ is a real-valued function of the time-space variables $(t,x)$, the momentum density is defined as $m(t,x)=(1-\partial_x^2)u(t,x)$, and the constant $\gamma>0$ acts as the linear dispersion parameter.

The mCH equation \eqref{mch} was originally identified as a completely integrable system by Fuchssteiner via the method of recursion operators \cite{Fu,Qiao}. It was independently recovered by Olver and Rosenau through the application of tri-Hamiltonian duality within the bi-Hamiltonian framework of the modified Korteweg-de Vries equation \cite{OR}. From a physical standpoint, the mCH equation \eqref{mch} serves as a model for the unidirectional propagation of moderate-amplitude shallow-water waves over a flat bottom \cite{CHL}. Here, $u(t,x)$ denotes the horizontal fluid velocity at a specific depth, and the linear dispersion parameter $\gamma > 0$ corresponds to the critical shallow-water wave speed.

Moreover, the mCH equation \eqref{mch} is deeply intertwined with other prominent nonlinear dispersive models. Notably, under a characteristic coordinate transformation, it reduces to the integrable short-pulse equation \cite{SW}, expressed as
\begin{align}\label{spe}
	u_{xt} = u + \frac{1}{6}(u^3)_{xx}.
\end{align}
Given that equation \eqref{spe} is a paradigmatic model for the propagation of ultra-short optical pulses in silica fibers, this connection further underscores the broad physical relevance and intricate mathematical structure of the mCH equation.

A hallmark of the mCH equation \eqref{mch} is its complete integrability, which is intrinsically tied to its bi-Hamiltonian structure \cite{FF,GLO}. Specifically, the equation admits two compatible Hamiltonian operators, enabling the formulation
\begin{align}\label{shmdjg}
	{m}_{t} = \mathcal{K}_{0}\frac{\delta E}{\delta m} = \mathcal{K}_{1}\frac{\delta H}{\delta m},
\end{align}
where $m = u - {u}_{xx}$. The two compatible Hamiltonian operators $\mathcal{K}_0$ and $\mathcal{K}_1$ are given by
\begin{align}
	\mathcal{K}_{0}=-{\partial}_{x}m{\partial}_{x}^{-1}m{\partial}_{x}-\frac{1}{2}\gamma{\partial}_{x}, \quad \mathcal{K}_{1} = {\partial }_{x} - {\partial }_{x}^{3},
\end{align}
which correspond to the Hamiltonian functionals
\begin{align}\label{E}
	&E(u)=\int_{\mathbb{R}}\left({{u}^{2} + {u}_{x}^{2}}\right) dx,\\
	&H(u)= \frac{1}{4}\int_{\mathbb{R}}\left( {-{u}^{4} - 2{u}^{2}{u}_{x}^{2} + \frac{1}{3}{u}_{x}^{4} - {2\gamma }{u}^{2}}\right) dx.\label{H}
\end{align}

Identifying the natural domains for these conserved quantities is essential. The functional $E(u)$ corresponds to the squared norm of the standard Sobolev space $H^1(\mathbb{R})$, ensuring it is well-defined for any velocity profile $u \in H^1(\mathbb{R})$. In contrast, the Hamiltonian $H(u)$ contains a higher-degree nonlinear term, namely $\frac{1}{3}u_x^4$. As a result, $H(u)$ requires higher regularity and is rigorously defined only within $W^{1,4}(\mathbb{R})$. Consequently, the natural admissible space for the full bi-Hamiltonian system is restricted to the intersection $H^1(\mathbb{R}) \cap W^{1,4}(\mathbb{R})$ \cite{LL}.

Moreover, by employing the operator $\mathcal{K}_1$ and the identity $m = (1-\partial_x^2)u$, the bi-Hamiltonian system \eqref{shmdjg} admits an equivalent, more direct representation in terms of the primary velocity variable $u$, given by
\begin{align}\label{hmdjgu}
	\frac{\partial u}{\partial t} = \mathcal{J}\frac{\delta H}{\delta u},
\end{align}
where the operator $\mathcal{J}={\partial}_{x}{\left(1-{\partial}_{x}^{2}\right) }^{-1}$ is skew-symmetric and is a bounded operator on ${L}^{2}(\mathbb{R})$. This specific formulation is instrumental for analyzing the underlying dynamics and variational properties of the solutions \cite{CH,GLO,WL,XZQ}.

Beyond its elegant algebraic framework, the mCH equation \eqref{mch} exhibits a rich spectrum of nonlinear wave phenomena, which notably include the existence of solitary waves and multi-soliton solutions \cite{LL,LLZ1,LLZ2}. Rigorous analysis establishes that the mCH equation admits smooth solitary wave solutions if and only if the wave propagation speed $c$ satisfies the condition $0 < \gamma < c \le 2\gamma$. These traveling wave states assume the standard ansatz $u(t,x) = \phi_c(x - ct + x_0)$, where $\phi_c$ denotes the localized wave profile and $x_0 \in \mathbb{R}$ is an arbitrary phase shift. Substituting this traveling wave ansatz into the mCH equation \eqref{mch} yields a nonlinear ordinary differential equation governing the profile function:
\begin{align}\label{profile_eq1}
	\left(\phi_c^2 - (\partial_\xi\phi_c)^2 - c\right)\left(\phi_c - \partial_\xi^2\phi_c\right) = -\gamma\phi_c,
\end{align}
where $\xi = x - ct + x_0$. Multiplying \eqref{profile_eq1} by $\partial_\xi\phi_c$ and integrating once with respect to $\xi$ yields the corresponding first-order integrated form
\begin{align}\label{profile_eq2}
	\left(\phi_c^2 - (\partial_\xi\phi_c)^2 - c\right)^2 = c^2 - 2\gamma\phi_c^2,
\end{align}
which characterizes the homoclinic orbit associated with the smooth solitary wave dynamics.

In addition to single solitary waves, the complete integrability of the mCH equation ensures the existence of complex multisoliton solutions $U^{(N)}(t,x)$, which describe the nonlinear interaction of $N$ distinct solitons \cite{Ma}. A key dynamical feature of these configurations is their purely elastic interaction \cite{Lax}. As time approaches infinity, the composite solution asymptotically decouples into a linear superposition of $N$ independent and spatially isolated solitary waves traveling at distinct speeds $c_i$ for $i = 1, 2, \dots, N$, namely
\begin{align}\label{asymptotic_N}
	U^{(N)}(t,x) \sim \sum_{i=1}^{N} \phi_{c_i}(x - c_i t - x_i), \quad \text{as } t \to \infty,
\end{align}
where the phase parameters $x_i \in \mathbb{R}$ depend on the corresponding wave speeds $c_i$. This asymptotic decoupling demonstrates the absence of dispersive effects at spatial infinity, establishing a rigorous analytical foundation for investigating the invariant quantities and stability properties of multisoliton dynamics \cite{TT,YTL}.

The Cauchy problem for the mCH equation has been extensively investigated concerning its well-posedness and wave-breaking phenomena. Initial studies established local well-posedness for initial data in the Sobolev space $H^s(\mathbb{R})$ with $s > \frac{7}{2}$ \cite{MZ}, a regularity threshold that was subsequently relaxed to $s > \frac{5}{2}$ \cite{CLZ,GLO}. Analogous to the classical CH model \cite{Co,CE}, the mCH equation exhibits finite-time wave-breaking, characterized by a uniformly bounded solution with an unbounded spatial derivative \cite{GLO}. Conversely, global existence is guaranteed for sufficiently regular initial data ($s \ge 3$) that satisfy specific momentum threshold conditions, effectively precluding wave-breaking \cite{LLZ1}.

The stability analysis of solitary waves and multisolitons remains an active area of research in nonlinear dispersive equations.
Comparing the mCH equation \eqref{mch} with the classical Korteweg-de Vries (KdV) \cite{KdV1895,J} and Camassa--Holm (CH) \cite{CH, CL09} models provides an essential methodological benchmark. These fundamental shallow water equations capture different nonlinear dispersive regimes. Extensive investigations into KdV multisoliton stability culminated in the seminal constrained variational principle developed by Maddocks and Sachs \cite{MS} in the natural energy space, a result recently extended to the negative Sobolev space $H^{-1}(\mathbb{R})$ by Killip and Vi\c{s}an \cite{KV22}. Concurrently, CH theory established the orbital stability of single solitons and peakons in $H^1(\mathbb{R})$ through spectral methods \cite{CM01,CS02,CS00}, while multisoliton configurations were subsequently proven stable in $H^n(\mathbb{R})$ by adapting localized spectral techniques \cite{CGI07,EM07,WL}. Furthermore, Li and Zhang \cite{LZ} utilized nonlocal conservation laws to prove that the $N$-solitons of the CH equation are orbitally stable. Additionally, Wu and Tian \cite{WTLW} demonstrated that the $N$-solitons of the two-component CH equation are dynamically stable and that its 2-soliton solutions are orbitally stable. These fundamental advancements highlight the pivotal role of higher-order conserved quantities in rigorously establishing the stability of multisolitons.

For the mCH equation, initial investigations established the orbital stability of the smooth single soliton through classical constrained minimization \cite{LL}. Utilizing the foundational conserved quantities $E(m)$ and $H(m)$, stability was proven within the intersection space $W^{1,4}(\mathbb{R}) \cap H^1(\mathbb{R})$ for the momentum density $m = u - u_{xx}$. To circumvent analytical obstacles introduced by the higher-degree nonlinearities in $H(u)$, subsequent studies shifted to the momentum density formulation $m$. Leveraging the locally conserved functionals
\begin{align}
	&F_1(m) = \int_{\mathbb{R}}\left(\frac{-2}{\gamma}M+\sqrt{2}\gamma^{-\frac{1}{2}}\right)dx,\\
	&F_2(m) = \int_{\mathbb{R}}\left(\frac{m_x^2}{2M^5}-\frac{1}{\gamma M}+\sqrt{2}\gamma^{-\frac{3}{2}}\right)dx,
\end{align}
where $M=\sqrt{m^2+\gamma/2}$, researchers successfully demonstrated the orbital stability of the single smooth solitary wave in $H^3(\mathbb{R})$ \cite{LLZ1}.

Recently, this momentum-driven framework was extended to investigate double soliton solutions \cite{LLZ2}. Utilizing the multiplier method, a higher-order conserved integral
\begin{align}
	F_3(m) = \int_{\mathbb{R}}\left(\frac{m_{xx}^2}{2M^7}+\frac{5m_x^2}{4M^7}-\frac{7m_x^4}{2M^9}+\frac{35\gamma m_x^4}{16M^{11}}-\frac{1}{4\gamma M^3}-\frac{2}{\gamma^2 M}+\frac{5\sqrt{2}}{2}\gamma^{-\frac{5}{2}}\right)dx,
\end{align}
was rigorously derived in \cite{CLZ,FF,Ma}. The subsequent stability analysis for the $2$-soliton solutions is extended to the higher-regularity Sobolev space $H^4(\mathbb{R})$ for the primary velocity variable $u$.

Despite these significant advancements, the study of the mCH equation requires further investigation to relax existing regularity constraints and to extend the stability framework to multi-soliton configurations. The primary objective of this work is to establish the dynamical and orbital stability of $N$-soliton solutions in $H^{N+1}(\mathbb{R})$.
Driven by the unique cubic nonlinearities and bi-Hamiltonian structure of the mCH equation, this study addresses three central issues: utilizing its bi-Hamiltonian hierarchy to generate conservation laws for stability analysis in appropriate functional spaces, establishing a rigorous mathematical framework to determine the spectral inertia of generalized $N$-solitons, and applying spectral decomposition and constrained variational principles to verify the dynamical and orbital stability of mCH $N$-solitons under complex geometric constraints.

Our strategy is to adapt and generalize the framework of Maddocks and Sachs \cite{MS}, which originally addressed the stability of multi-solitons for the KdV equation via a constrained variational principle. The proof of stability is thus reduced to analyzing the spectral inertia of the second variation of a specially constructed Lyapunov functional. This approach has been extended to various integrable models \cite{Lco,LT,NL,WA,WL,WT,WTLW,XGW}. However, the application of this methodology to the mCH $N$-solitons presents several significant technical challenges.

Our analysis addresses three principal technical challenges. The first involves the domain characterization of the conservation hierarchy, as higher-order conserved quantities $\{F_n\}$ often involve higher-order spatial derivatives such as $m_{xx}$, necessitating restrictive functional topologies like $H^4(\mathbb{R})$. Identifying a recursive set of conserved quantities that balances mathematical tractability with minimal regularity requirements is therefore essential. The second challenge lies in the spectral analysis of the linearized operator, which for general mCH $N$-solitons escalates to a differential operator of order $2(N+1)$. The presence of these high-order derivative terms and non-constant coefficients exhibiting complex nonlinear interactions precludes the specific algebraic factorizations used in two-soliton scenarios, thereby demanding a fully systematic framework to determine the spectral inertia. Finally, the multidimensional stability constraints induced by $N$-soliton interactions pose a formidable structural hurdle. Due to the inherently nonlocal nature of the mCH recursion operator, the constrained variational framework is highly sensitive to the generalized conservation laws and their corresponding Lagrange multipliers, which rigorously requires the explicit evaluation of a highly coupled $N \times N$ Hessian matrix governed by generalized symmetric polynomials.

In order to systematically overcome the aforementioned difficulties, our analysis employs the following strategies.

To address the initial challenge concerning the domain of conservation laws and stringent regularity constraints, we leverage the intrinsic bi-Hamiltonian structure, circumventing the limitations of the conventional $\{F_n\}$ sequence. By combining the compatible Hamiltonian operators $\mathcal{K}_0$ and $\mathcal{K}_1$, we define the recursion operator

\begin{align}
\mathcal{K}[m] = \mathcal{K}_1^{-1}\mathcal{K}_0 = -(1-\partial_x^2)^{-1}\left(m\partial_x^{-1}m\partial_x + \frac{\gamma}{2}\right),
\end{align}

and the explicit inverse of this operator is given by

\begin{align}\label{hdgszm}
\widehat{\mathcal{K}}[m] = -\frac{2}{\gamma}\left(1 - \frac{m}{M}\partial_x^{-1}\frac{m}{M}\partial_x\right)\left(1-\partial_x^2\right),
\end{align}

where $M = \sqrt{m^2 + \frac{\gamma}{2}}$. Applying $\widehat{\mathcal{K}}[m]$ recursively from the foundational energy $E_0(u) = E(u)$ generates a novel hierarchy of conserved quantities $\{E_n\}_{n \ge 0}$ satisfying the recursive relation

\begin{align}\label{dggxm}
\frac{\delta E_n}{\delta m} = \widehat{\mathcal{K}}[m]\frac{\delta E_{n-1}}{\delta m}, \quad n = 1, 2, \dots, N.
\end{align}

This recursive scheme yields explicit higher-order functionals, for instance,

\begin{align}
E_1(m) &= \int_{\mathbb{R}}\left(-2\sqrt{\frac{2}{\gamma}}M+2\right)dx,\\
E_2(m) &= \int_{\mathbb{R}}\left(\sqrt{\frac{\gamma}{2}}\frac{m_x^2}{M^5}+\frac{2}{\gamma}\sqrt{\frac{2}{\gamma}}\frac{m^2}{M}\right)dx.
\end{align}

To perform the variational analysis directly on the primary velocity profile $u$, the recursion relations \eqref{dggxm} are reformulated as

\begin{align}
\frac{\delta E_n}{\delta u} = \widehat{\mathcal{R}}(u)\frac{\delta E_{n-1}}{\delta u}, \quad n = 1, 2, \dots, N.
\end{align}

Here, the transformed recursion operator $\widehat{\mathcal{R}}(u)$ is defined as
\begin{align}
\widehat{\mathcal{R}}(u) = -\frac{2}{\gamma}\left(1-\partial_x^2\right)\left(1 - \frac{m}{M}\partial_x^{-1}\frac{m}{M}\partial_x\right),
\end{align}
with its inverse
\begin{align*}
\mathcal{R}(u) = (1-\partial_x^2)\mathcal{K}[m](1-\partial_x^2)^{-1} = -\left(m\partial_x^{-1}m\partial_x + \frac{\gamma}{2}\right)(1-\partial_x^2)^{-1}.
\end{align*}

Notably, the natural domains for the generated sequence $\{E_n\}_{n=0}^N$ correspond precisely to the Sobolev spaces $H^{n+1}(\mathbb{R})$ (as proved in Section \ref{sec.2}). This property directly fulfills our objective to lower the regularity threshold and identifies the optimal energy space for the stability analysis.

\begin{re}
By exploiting the recursion operator $\widehat{\mathcal{R}}(u)$ to generate the $\{E_n\}$ hierarchy, the highest spatial derivative in $E_N(u)$ is bounded by order $N+1$. This reduction in regularity requirements allows us to rigorously establish the dynamical stability of $N$-soliton solutions in the optimal, lower-regularity space $H^{N+1}(\mathbb{R})$.
\end{re}

Equipped with this functional hierarchy, we apply the constrained variational principle by constructing the augmented Lyapunov functional $\mathcal{S}_{N}$ for the mCH $N$-solitons as
\begin{align}
\mathcal{S}_{N}(u) = (-1)^N\left(E_{N}(u) + \sum_{m=0}^{N-1} \mu_m E_m(u)\right),\label{lagrange_N}
\end{align}

where the Lagrange multipliers $\mu_{m}$ ($m=0, 1, \dots, N-1$) correspond to elementary symmetric functions of the wave speeds $c_{1}, \dots, c_{N}$ (as detailed in Section \ref{sec.2}). We demonstrate that $U^{(N)}$ is a critical point of $\mathcal{S}_{N}$, satisfying the generalized Euler-Lagrange equation

\begin{align}
\frac{\delta E_{N}(u)}{\delta u} + \sum_{m=0}^{N-1} \mu_{m} \frac{\delta E_{m}(u)}{\delta u} = 0,\quad \text{at } u = U^{(N)}.\label{1.20_N}
\end{align}

Establishing the dynamical stability of $U^{(N)}$ requires showing that it minimizes $E_{N}$ subject to the constraints $E_{m}(u) = E_{m}\left(U^{(N)}\right)$ for $m = 0, 1, \dots, N-1$. This fundamentally requires the self-adjoint second variation operator,

\begin{align}
\mathcal{L}_N := \mathcal{S}''_{N}(U^{(N)}), \label{VL_N}
\end{align}

to be positive definite under modulations along the constrained directions.

To address the second challenge regarding the spectral analysis of the high-order differential operator $\mathcal{L}_N$, we overcome the inapplicability of classical Sturm-Liouville theory by bridging single-soliton spectral data with the composite $N$-soliton structure. We focus our study on the linear operator $L_n = E''_{n}(\phi_c) + \omega E''_{n-1}(\phi_c)$ for the single soliton $\phi_c$ and determine its spectral properties by leveraging the Inverse Scattering Transform (IST) framework \cite{AKNS,LTY22a,LTY22b,OTW,XZQ,YTL}. By constructing squared eigenfunctions $F^\pm(x,k)$ from the Jost solutions of the Lax pair, we formulate a generalized basis that facilitates a rigorous analysis of $L_n$ and its conjugate forms $\mathcal{J}L_n$ and $L_n\mathcal{J}$. Specifically, we exploit the fundamental operator identities establishing that $\mathcal{J}L_n$ and $L_n\mathcal{J}$ commute with the adjoint recursion operator $\widehat{\mathcal{R}}^*(\phi_c)$ and the recursion operator $\widehat{\mathcal{R}}(\phi_c)$, respectively. As $t \to \infty$, the spectrum of the $N$-soliton operator $\mathcal{L}_N$ converges to the union of the individual single-soliton spectra, allowing us to precisely calculate the inertia index of $\mathcal{L}_N$ to verify the critical spectral condition $\mathrm{in}(\mathcal{L}_N) = p(D_N)$. In this formulation, $\mathrm{in}(\mathcal{L}_N)$ denotes the number of negative eigenvalues of $\mathcal{L}_N$ and $p(D_N)$ represents the number of positive eigenvalues of the $N \times N$ Hessian matrix $D_N := \left\{ \frac{\partial^{2} \mathcal{S}_{N}}{\partial \mu_{i} \partial \mu_{j}} \right\}$.

Finally, to resolve the structural complexity of the linearized $N$-soliton operator, we establish dynamical stability via a refined coercivity estimate. Spectral analysis reveals that $\mathcal{L}_N$ possesses exactly $\left[\frac{N+1}{2}\right]$ negative eigenvalues and an $N$-dimensional generalized kernel, corresponding to the potential instability directions associated with spatial translations and scalings. Given that $\mathcal{L}_N$ is a strongly elliptic operator of order $2(N+1)$, we apply a generalized G{\aa}rding's inequality to obtain a local coercivity bound near the $N$-soliton manifold in $H^{N+1}(\mathbb{R})$. By introducing $N$ dynamic temporal modulation parameters to continuously project out the neutral translational directions, we extend this local coercivity to a global bound on the $H^{N+1}$ distance. A standard continuity argument then completes the proof of orbital stability.

Next, we will present our main results as follows. An essential prerequisite for establishing the stability of solitary waves is the existence of global solutions, as demonstrated in \cite{GLO}.
\begin{prop}[Gui, Liu, Olver and Qu \cite{GLO}]
	Given an initial profile $u_0\in H^s(\mathbb{R})$ with $s>\frac{5}{2}$, there exists a maximal time $T>0$, which is independent of the choice of $s$, and a unique solution $u(t)$ to the Cauchy problem associated with the mCH equation \eqref{mch} with initial value $u(0)=u_0$ satisfying
	\begin{align}
		u = u(\cdot; u_0) \in C([0, T); H^s(\mathbb{R})) \cap C^1([0, T); H^{s-1}(\mathbb{R})).
	\end{align}
\end{prop}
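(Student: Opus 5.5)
The plan is to prove this local well-posedness result with Kato's quasilinear semigroup theory. The statement is cited from \cite{GLO}, so we only sketch the standard route. We first rewrite \eqref{mch} in terms of $u$ by applying $(1-\partial_x^2)^{-1}$. Expanding $((u^2-u_x^2)m)_x$ and isolating the transport part gives
\begin{align*}
u_t + (u^2-u_x^2)u_x = -(1-\partial_x^2)^{-1}\partial_x\Big(\tfrac{2}{3}u^3+u u_x^2\Big)+(1-\partial_x^2)^{-1}\big(\tfrac13 u_x^3\big)-\gamma(1-\partial_x^2)^{-1}u_x =: F(u).
\end{align*}
The precise lower-order terms are to be checked by direct computation. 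The point is that $u_t + A(u)u = F(u)$ with $A(u)=(u^2-u_x^2)\partial_x$.

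We then apply Kato's theorem with $X=H^{s-1}(\mathbb{R})$, $Y=H^{s}(\mathbb{R})$ and the isomorphism $Q=(1-\partial_x^2)^{1/2}$. This requires three verifications.
\begin{itemize}
\item[(i)] $A(u)$ is quasi-m-accretive on $H^{s-1}$ for $u$ in a ball of $H^s$. This holds because the coefficient $a=u^2-u_x^2$ lies in $H^{s-1}\subset W^{1,\infty}$ when $s>5/2$.
\item[(ii)] The commutator $[Q,A(u)]Q^{-1}$ is bounded on $H^{s-1}$, with Lipschitz dependence on $u$. This follows from the Kato--Ponce commutator estimate, since $\|a_x\|_{L^\infty}$ and $\|a\|_{H^{s-1}}$ are controlled by $\|u\|_{H^s}$.
\item[(iii)] $F$ is bounded and Lipschitz on balls, both $H^s\to H^s$ and $H^{s-1}\to H^{s-1}$. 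This uses the two-derivative smoothing of $(1-\partial_x^2)^{-1}$ together with the algebra property of $H^{s-1}$ for $s-1>1/2$.
\end{itemize}
Kato's theorem then yields a unique solution $u\in C([0,T);H^s)\cap C^1([0,T);H^{s-1})$ depending continuously on $u_0$.

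To see that $T$ is independent of $s$, we use a blow-up criterion. An $H^s$ energy estimate with Kato--Ponce gives
\begin{align*}
\frac{d}{dt}\|u\|_{H^s}^2\le C\big(\|u\|_{W^{1,\infty}}^2+\|u\|_{W^{2,\infty}}\|u\|_{W^{1,\infty}}+\gamma\big)\|u\|_{H^s}^2 .
\end{align*}
This is where $s>5/2$ is essential: the quasilinear coefficient contains $u_x$, so its derivative involves $u_{xx}$. By Gronwall's inequality the solution persists as long as $\int_0^T\|u\|_{W^{2,\infty}}\,dt<\infty$, and this quantity does not depend on $s$. Hence the maximal time for $H^{s'}$ data coincides with that for $H^{s}$ data whenever $s'>s>5/2$.

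The main obstacle is the loss of derivative in the quasilinear coefficient $u_x^2$. Unlike in the CH equation, this coefficient forces the threshold $s>5/2$ rather than $3/2$. It also makes the commutator estimate in step (ii) and the Lipschitz bound for $A(u)-A(v)$ in $\mathcal{L}(H^s,H^{s-1})$ the delicate points, and these must be handled with sharp product and commutator estimates at regularity $s-1>3/2$.
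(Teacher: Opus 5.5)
The paper gives no proof of this proposition; it is quoted from Gui, Liu, Olver and Qu. Your sketch therefore has to stand on its own, and in the variable $u$ it breaks down at the principal part, not at lower-order terms.

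\emph{The reformulation is wrong.} Expanding $((u^2-u_x^2)m)_x$ gives
\[
u_t+\bigl(u^2-\tfrac13u_x^2\bigr)u_x=-(1-\partial_x^2)^{-1}\partial_x\bigl(\tfrac23u^3+uu_x^2\bigr)-\tfrac13(1-\partial_x^2)^{-1}\bigl(u_x^3\bigr)-\gamma(1-\partial_x^2)^{-1}u_x .
\]
With your choice $A(u)=(u^2-u_x^2)\partial_x$, the remainder $F(u)$ therefore contains the local term $-\tfrac23u_x^3$. This term lies in $H^{s-1}$ but not in $H^s$, so (iii) fails.

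\emph{Kato's hypotheses fail even after correction.} The corrected coefficient $a=u^2-\tfrac13u_x^2$ still contains $u_x^2$. For $u\in H^s$ it lies only in $H^{s-1}$, one derivative below the Camassa--Holm coefficient $u\in H^s$ that your template copies. This breaks two of Kato's hypotheses for $X=H^{s-1}$, $Y=H^s$:
\begin{itemize}
\item[(a)] In (ii), $[\Lambda,a]\partial_x\Lambda^{-1}$ acts to leading order as multiplication by $a_x\in H^{s-2}$ composed with an order-zero multiplier. This is not bounded on $H^{s-1}$. The Kato--Ponce bound you cite needs $\|a\|_{H^s}$, i.e.\ $\|u\|_{H^{s+1}}$, not $\|a\|_{H^{s-1}}$.
\item[(b)] The Lipschitz hypothesis must hold in the weak norm, $\|(A(u)-A(v))w\|_{H^{s-1}}\le\mu\|u-v\|_{H^{s-1}}\|w\|_{H^s}$, because uniqueness and the contraction are run in $X$. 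This is false. Take $v$ fixed with $v_x\neq0$, and $u=v+h$ with $h$ concentrated at frequency $N$ and $\|h\|_{H^s}$ fixed. Then the left side contains $-\tfrac23v_xh_xw_x$, of $H^{s-1}$-size about $\|h\|_{H^s}$. The right side is about $\|h\|_{H^{s-1}}\approx N^{-1}\|h\|_{H^s}$.
\end{itemize}
These are not delicate estimates awaiting sharper tools: they are false, because the $u$-equation is not quasilinear in Kato's sense. The same loss infects your $H^s$ energy estimate, where Kato--Ponce produces $\|u_x^2\|_{H^s}$.

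The missing idea is to run the theory on the momentum equation
\[
m_t+(u^2-u_x^2)m_x+2u_xm^2+\gamma u_x=0,\qquad u=(1-\partial_x^2)^{-1}m ,
\]
which follows from $\partial_x(u^2-u_x^2)=2u_xm$. Take $Y=H^{s-2}$, $X=H^{s-3}$, $Q=(1-\partial_x^2)^{1/2}$ and $A(m)=(u^2-u_x^2)\partial_x$. The coefficient now depends on $m$ through a two-derivative smoothing, and each hypothesis can be checked:
\begin{itemize}
\item[(a)] The coefficient lies in $H^{s-1}$, two derivatives above $X$, with $s-1>\tfrac32$. This gives quasi-m-accretivity and the commutator bound.
\item[(b)] One has $\|a(m)-a(n)\|_{H^{s-2}}\lesssim\|m-n\|_{H^{s-3}}$, and $H^{s-2}$ multiplies $H^{s-3}$ since $s-2>\tfrac12$. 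This gives the Lipschitz condition in the weak norm.
\item[(c)] The forcing $-2u_xm^2-\gamma u_x$ is bounded on $H^{s-2}$-balls by the algebra property of $H^{s-2}$; this is exactly where $s>\tfrac52$ enters. It is also Lipschitz in $H^{s-3}$.
\end{itemize}
Since $(1-\partial_x^2)^{-1}$ is an isomorphism $H^r\to H^{r+2}$, the resulting $m\in C([0,T);H^{s-2})\cap C^1([0,T);H^{s-3})$ is precisely the stated regularity for $u$. The $s$-independence of $T$ also comes out cleanly here. Because $a_x=2u_xm$, the $H^{s-2}$ energy estimate for $m$ closes with a Gronwall factor depending only on $\gamma$, $\|u\|_{W^{1,\infty}}$ and $\|m\|_{L^\infty}$. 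This yields a continuation criterion that does not involve $s$.
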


Building upon this well-posedness result, we state our main theorems concerning the stability of the $N$-solitons for the mCH equation \eqref{mch}. A central objective of this work is not only to establish the dynamical and orbital stability of the mCH $N$-soliton solutions, but also to further reduce the required regularity of the stability space in \cite{LLZ2}. This reduction relies on the fact that the novel hierarchy of conservation laws $\{E_k\}_{k=0}^{N}$, derived from the bi-Hamiltonian system of the mCH equation, possesses well-defined and optimal regularity domains. Therefore, we first establish the following foundational theorem.

\begin{theorem}\label{thm:domain_conservation_laws}
	For the mCH equation \eqref{mch}, the $k$-th order conservation law $E_k(u) = \int_{\mathbb{R}} \mathcal{E}_k dx$ generated by the recursion operator $\widehat{\mathcal{R}}(u)$ explicitly depends on the spatial derivatives of $u$ up to order $k+1$. Furthermore, the natural Sobolev space for the functional $E_k(u)$ to be well-defined and finite is precisely $H^{k+1}(\mathbb{R})$.
\end{theorem}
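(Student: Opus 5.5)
We argue by induction on $k$, tracking the highest derivative of $m$ in the density $\mathcal{E}_k$ and in the variational derivative $\frac{\delta E_k}{\delta m}$, and then read off the regularity threshold. The base cases are explicit. For $E_0 = E(u) = \int (u^2+u_x^2)\,dx$, the density involves $u$ up to order $1$, so $E_0$ is finite exactly on $H^1(\mathbb{R})$. For $E_1 = \int (2 - 2\sqrt{2/\gamma}\,M)\,dx$, the density depends only on $m = u - u_{xx}$, hence on derivatives of $u$ up to order $2$. Since $2-2\sqrt{2/\gamma}M = -\frac{2}{\gamma}\sqrt{\frac{2}{\gamma}}\, m^2 + O(m^4)$ near $m=0$, the integrand is comparable to $m^2$ whenever $m$ is bounded, and $\|m\|_{L^2} \simeq \|u\|_{H^2}$. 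The same holds for $E_2$, whose top term $\sqrt{\gamma/2}\, m_x^2/M^5$ involves $u_{xxx}$ and is comparable to $\|m_x\|_{L^2}^2$, because $M \geq \sqrt{\gamma/2} > 0$ is bounded below and bounded above on $L^\infty$-bounded sets.

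For the inductive step, we write the recursion in the $m$-variable as $\frac{\delta E_k}{\delta m} = \widehat{\mathcal{K}}[m]\,\frac{\delta E_{k-1}}{\delta m}$, with $\widehat{\mathcal{K}}[m] = -\frac{2}{\gamma}\bigl(1 - \frac{m}{M}\partial_x^{-1}\frac{m}{M}\partial_x\bigr)(1-\partial_x^2)$. The inductive hypothesis is that, for $k \geq 1$, $\frac{\delta E_{k-1}}{\delta m}$ is a local expression in $m, m_x, \dots, m_{x}^{(2k-4)}$, which reflects the $m$-density of $E_{k-1}$ depending on $m$ up to order $k-2$. Applying $(1-\partial_x^2)$ raises this order to $2k-2$. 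The bracket does not raise it further: the combination $\frac{m}{M}\partial_x^{-1}\frac{m}{M}\partial_x$ stays local on gradients of conserved densities, which we check by writing the integrand as $\partial_x$ of a local expression, using $\frac{m}{M}m_x = M_x$. Thus $\frac{\delta E_k}{\delta m}$ has order $2(k-1)$ in $m$.

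A density whose Euler–Lagrange expression has order $2(k-1)$ can be chosen, after integrations by parts, to be quadratic in its top derivative $m^{(k-1)}_x$ with a coefficient that is a smooth positive function of $M$ (the pattern $m_x^2/M^5$, $m_{xx}^2/M^7$ seen in $E_2$ and $F_3$). To recover $\mathcal{E}_k$ from its gradient we use the homotopy formula $E_k(m) = \int_0^1\int \frac{\delta E_k}{\delta m}(\lambda m)\, m \,dx\,d\lambda$. Since $m^{(k-1)} \sim \partial_x^{k+1}u$, this shows that $\mathcal{E}_k$ depends on derivatives of $u$ up to order $k+1$.

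For the function space, the leading part of $\mathcal{E}_k$ is $a_k(M)\,|m^{(k-1)}|^2$ with $a_k > 0$, and all lower-order terms are polynomial in $m, \dots, m^{(k-2)}$ with coefficients bounded by powers of $M^{-1}$. By Gagliardo–Nirenberg and the embedding $H^{k+1} \subset W^{k,\infty}$, every term is integrable when $u \in H^{k+1}$. Constant terms such as the $+2$ in $E_1$ are absorbed by the vanishing at $m=0$. Conversely, finiteness of $E_k$ together with boundedness of $m$ controls $\|m^{(k-1)}\|_{L^2}$ from the leading quadratic term, which is the sense in which $H^{k+1}$ is precisely the natural space.

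The main obstacle is the inductive step, specifically showing that the nonlocal operator $\partial_x^{-1}$ produces local expressions at every level and that the order rises by exactly one. First we would check the parity and weight structure: assign weight $1$ to $\partial_x$, show that $\widehat{\mathcal{K}}$ raises the weight by $2$, and verify locality through the exactness identity $\frac{m}{M}\partial_x(\cdot) = \partial_x(\cdots)$ modulo the conserved-gradient property, which is available from bi-Hamiltonian compatibility.
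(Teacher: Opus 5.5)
Your route is the paper's, transposed from $u$ to $m$. Since $\frac{\delta E_k}{\delta u}=(1-\partial_x^2)\frac{\delta E_k}{\delta m}$, your claim that $\frac{\delta E_k}{\delta m}$ has order $2k-2$ in $m$ is the paper's Lemma that $\frac{\delta E_k}{\delta u}$ has order $2k+2$ in $u$. Both then halve the order to get the density and use Sobolev embedding with $M\ge\sqrt{\gamma/2}$. The gap is in your inductive step. The statement ``the bracket does not raise the order'' is only an upper bound. The weight count you propose cannot supply the lower bound, because homogeneity says nothing about whether the coefficient of $m^{(2k-2)}$ vanishes. Exactness of the order is what both halves of the statement need: order exactly $k+1$, and $H^{k+1}$ as the precise space. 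The paper's Lemma supplies it by computing the principal part of the recursion operator. Write $h=(1-\partial_x^2)g$ and integrate by parts: $h-\frac{m}{M}\partial_x^{-1}\bigl(\frac{m}{M}h_x\bigr)=\bigl(1-\frac{m^2}{M^2}\bigr)h+\frac{m}{M}\partial_x^{-1}\bigl((\frac{m}{M})_x h\bigr)$. Since $1-\frac{m^2}{M^2}=\frac{\gamma}{2M^2}>0$, the principal part of $\widehat{\mathcal{K}}[m]$ is $M^{-2}\partial_x^2$, which never degenerates.

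Carrying this out also shows your claim $a_k>0$ is false. Starting from $\frac{\delta E_1}{\delta m}=-2\sqrt{2/\gamma}\,m/M$, the top term of $\frac{\delta E_k}{\delta m}$ is $-\sqrt{2\gamma}\,M^{-2k-1}m^{(2k-2)}$. So the leading density coefficient is $a_k=(-1)^k\sqrt{\gamma/2}\,M^{-2k-1}$, which is negative for odd $k$. Your own expansion of $E_1$ is negative; its coefficient is $-2/\gamma$, not $-\frac{2}{\gamma}\sqrt{2/\gamma}$. The positive $m_{xx}^2/(2M^7)$ you cite belongs to $F_3$, which is a negative multiple of the top part of $E_3$. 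This alternation is why the paper inserts $(-1)^N$ in $\mathcal{S}_N$. Your function-space argument only needs $|a_k|$ bounded below on $L^\infty$-bounded sets, and the computation gives exactly that.

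On locality you are more careful than the paper, which treats $\partial_x^{-1}$ merely as an operator of order $-1$ and never checks that the output is local. Your sketch can be closed as follows. Since $\frac{\delta E_1}{\delta m}$ is a multiple of $\frac{m}{M}$ and $\partial_x(1-\partial_x^2)=\mathcal{K}_1$, the integrand $\frac{m}{M}\mathcal{K}_1\frac{\delta E_{k-1}}{\delta m}$ integrates to a multiple of $\{E_1,E_{k-1}\}_{\mathcal{K}_1}$. This bracket vanishes by the Lenard relations up to index $k-1$ together with skew-symmetry of $\mathcal{K}_0$ and $\mathcal{K}_1$. By exactness of the variational complex, the integrand is then $\partial_x$ of a local expression.
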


Equipped with the aforementioned hierarchy of conservation laws, we are able to construct a suitable augmented Lyapunov functional framed within an optimal energy space. Utilizing the generalized constrained variational framework developed by Maddocks and Sachs \cite{MS}, we establish the following theorem regarding the dynamical stability of the mCH $N$-solitons.

\begin{theorem}[Dynamical stability of $N$-solitons]\label{th1.1}
	Let $U^{(N)}(t,x ; \mathbf{c}, \mathbf{x}_0)$ be the smooth $N$-soliton solution of the mCH equation \eqref{mch} characterized by the wave speed vector $\mathbf{c}=(c_1, \dots, c_N)$ satisfying $\gamma < c_1 < \dots < c_N \le 2\gamma$ and initial phase shifts $\mathbf{x}_0 \in \mathbb{R}^N$. For any $\varepsilon > 0$, there exists a $\delta > 0$ such that if the initial data $u_0 \in H^{N+1}(\mathbb{R})$ satisfies
	\begin{align*}
		\left\|u_{0}(\cdot) - U^{(N)}(0, \cdot \, ; \mathbf{c}, \mathbf{x}_0)\right\|_{H^{N+1}(\mathbb{R})} < \delta,
	\end{align*}
	then the corresponding solution $u(t,x)$ of the mCH equation \eqref{mch} with the initial data $u(0) = u_0$ satisfies $u \in C([0, +\infty); H^{N+1}(\mathbb{R}))$, and for all $t > 0$,
	\begin{align*}
		\inf_{\psi \in G_{\mathbf{c}}} \left\|u(t, \cdot) - \psi(t, \cdot)\right\|_{H^{N+1}(\mathbb{R})} < \varepsilon,
	\end{align*}
	where $G_{\mathbf{c}}$ denotes the manifold generated by the conserved quantities, defined as
	\begin{align*}
		G_{\mathbf{c}} = \left\{ \psi \in H^{N+1}(\mathbb{R}) \mid E_k(\psi) = E_k(U^{(N)}(0, \cdot \, ; \mathbf{c}, \mathbf{x}_0)) \text{ for } k = 0, 1, \dots, N \right\}.
	\end{align*}
\end{theorem}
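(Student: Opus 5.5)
The proof follows the Maddocks--Sachs scheme: a Lyapunov functional built from $\{E_k\}$, a spectral count for its Hessian, and a continuity argument.

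\emph{Step 1: the functional and global existence.} By Theorem \ref{thm:domain_conservation_laws}, every $E_k$, $0\le k\le N$, is well defined and smooth on $H^{N+1}(\mathbb{R})$. The recursion $\delta E_n/\delta u=\widehat{\mathcal{R}}(u)\,\delta E_{n-1}/\delta u$ together with the bi-Hamiltonian structure \eqref{shmdjg} gives that each $E_k$ is conserved along solutions with $u_0\in H^{N+1}$. Since $N+1\ge 3>\frac52$, the local theory of Gui--Liu--Olver--Qu applies. Near $U^{(N)}$ we have $m\approx m_{U^{(N)}}$, so the a priori control of $\|u(t)\|_{H^{N+1}}$ supplied by $E_0,\dots,E_N$ (Step 3) prevents blow-up of the $H^{N+1}$ norm. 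Global existence then follows by the blow-up alternative, which gives $u\in C([0,\infty);H^{N+1})$.

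Next take $\mathcal{S}_N$ as in \eqref{lagrange_N}, with $\mu_m$ the elementary symmetric functions of $c_1,\dots,c_N$. One first checks that $U^{(N)}$ satisfies \eqref{1.20_N}. At the level of single solitons, $\delta E_n(\phi_c)=c^{-1}\delta E_{n-1}(\phi_c)$ up to normalization, i.e. $\phi_c$ is an eigenfunction of the recursion. Hence $\prod_i(\widehat{\mathcal{R}}-c_i^{-1})$ annihilates $\delta E_0$ along the $N$-soliton, and this identity persists for all $t$ because the $E_k$ are conserved and the flows commute.

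\emph{Step 2: the spectral condition $\mathrm{in}(\mathcal{L}_N)=p(D_N)$.} This is the main obstacle. First, $\mathcal{L}_N(t)$ is isoinertial in $t$: its inertia is independent of time because it is conjugated along the flow, a standard fact from the commuting-flows structure. So the inertia can be computed as $t\to\infty$. In that limit, \eqref{asymptotic_N} and exponential localization show that the spectrum of $\mathcal{L}_N$ converges to the union of the spectra of the single-soliton operators
\[
\prod_{j\neq i}(\cdots)\,L^{(i)},\qquad L^{(i)}=E_N''(\phi_{c_i})+\textstyle\sum_m \mu_m E_m''(\phi_{c_i}).
\]
For each $i$, I would use the squared eigenfunctions $F^\pm(x,k)$ of the Lax pair together with the commutation of $\mathcal{J}L_n$ with $\widehat{\mathcal{R}}^*(\phi_c)$. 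This diagonalizes $L^{(i)}$ on the continuous spectrum, where the symbol is a polynomial in $k^2$ whose sign is fixed by the ordering of the $c_j$. The discrete part is spanned by $\partial_x\phi_{c_i}$ (the kernel) and $\partial_c\phi_{c_i}$, and there the sign of $\langle L^{(i)}\partial_c\phi,\partial_c\phi\rangle$ equals $\pm\prod_{j\ne i}(c_i^{-1}-c_j^{-1})$. Counting the sign changes gives $\mathrm{in}(\mathcal{L}_N)=[\frac{N+1}{2}]$ and $\ker\mathcal{L}_N=\mathrm{span}\{\partial_{x_i}U^{(N)}\}$. Separately, I would compute $D_N$ from $\partial_{\mu_j}E_k$, expressed through $d E_k(\phi_c)/dc$, and evaluate its positive eigenvalues via a Vandermonde-type factorization in symmetric polynomials, checking that they also number $[\frac{N+1}{2}]$.

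\emph{Step 3: coercivity and conclusion.} Given Step 2, the Maddocks--Sachs lemma yields
\[
\langle\mathcal{L}_N v,v\rangle\ge C\|v\|_{H^{N+1}}^2
\]
for $v$ tangent to the constraint set $\{E_k=E_k(U^{(N)}),\,k<N\}$ and orthogonal to $\ker\mathcal{L}_N$. Here the $H^{N+1}$ norm, rather than merely $L^2$, comes from G{\aa}rding's inequality: $\mathcal{L}_N$ is strongly elliptic of order $2(N+1)$ with leading symbol of definite sign.

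The proof then concludes by contradiction. Choose translation parameters $\mathbf{x}(t)$ by the implicit function theorem so that $u(t)-U^{(N)}(\cdot;\mathbf{c},\mathbf{x}(t))\perp\partial_{x_i}U^{(N)}$. Taylor-expand $\mathcal{S}_N$, absorbing the constraint defects through $|E_k(u_0)-E_k(U^{(N)})|=O(\delta)$ (or noting that $G_{\mathbf c}$ already fixes them). This gives $\|u(t)-\psi\|_{H^{N+1}}^2\lesssim \mathcal{S}_N(u_0)-\mathcal{S}_N(U^{(N)})+o(\cdot)=O(\delta)$, and the continuity argument closes the bootstrap for all $t>0$.
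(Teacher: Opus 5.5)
Your proposal follows the paper's proof essentially step for step. It uses the Maddocks--Sachs functional $\mathcal{S}_N$ built from the $E_k$ hierarchy, isoinertiality of $\mathcal{L}_N$, and $t\to\infty$ decoupling into single-soliton operators, which are analysed with squared eigenfunctions and the commutation of $\mathcal{J}L_n$ with $\widehat{\mathcal{R}}^*(\phi_c)$. It gets $n(\mathcal{L}_N)=p(D_N)=\left[\frac{N+1}{2}\right]$ from the sign of $\langle L_{N,j}\partial_c\phi_{c_j},\partial_c\phi_{c_j}\rangle$ and a Vandermonde factorization of $D_N$, and closes with G{\aa}rding-based $H^{N+1}$ coercivity, translation modulation and a continuity argument.

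The departures are minor:
\begin{itemize}
\item You absorb the constraint defects directly instead of using the paper's penalized functional $\Delta_N$.
\item The multipliers should be symmetric functions of $\omega_j=2/c_j$, not of $c_j$.
\item Your claim $N+1\ge 3>\frac{5}{2}$ fails for $N=1$, since the cited well-posedness result does not cover $H^2$; the paper leaves this case open as well.
\end{itemize}
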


While Theorem \ref{th1.1} establishes the stability of the entire $N$-soliton manifold $G_{\mathbf{c}}$ under the mCH flow, it is fundamentally important to characterize the stability of the multi-soliton profile up to dynamic spatial shifts. By introducing appropriate temporal modulation parameters to continuously project the perturbed solution onto the translation directions, we can translate the dynamical stability of the manifold into the orbital stability of the wave profile. This leads to our second main result.

\begin{theorem}[Orbital stability of $N$-solitons]\label{th1.2}
	Let $U^{(N)}(t,x ; \mathbf{c}, \mathbf{x}_0)$ be the smooth $N$-soliton solution of the mCH equation \eqref{mch} characterized by the wave speed vector $\mathbf{c}=(c_1, c_2, \dots, c_N)$ satisfying $\gamma < c_1 < c_2 < \dots < c_N \le 2\gamma$. For any $\varepsilon > 0$, there exists a $\delta > 0$ such that if the initial data $u_{0} \in H^{N+1}(\mathbb{R})$ satisfies
	\begin{align*}
		\left\|u_{0}(\cdot) - U^{(N)}(0, \cdot \, ; \mathbf{c}, \mathbf{x}_0)\right\|_{H^{N+1}(\mathbb{R})} < \delta,
	\end{align*}
	then the corresponding solution $u(t,x)$ to the mCH equation \eqref{mch} with the initial condition $u(0, \cdot) = u_{0}(\cdot)$ exists globally in time, i.e., $u \in C([0, \infty); H^{N+1}(\mathbb{R}))$. Furthermore, this solution is orbitally stable, in the sense that there exist continuous spatial shift functions $\mathbf{r}(t) = (r_1(t), r_2(t), \dots, r_N(t))$ such that
	\begin{align*}
		\sup_{t \ge 0}\left\|u(t, \cdot) - U^{(N)}(t, \cdot \, ; \mathbf{c}, \mathbf{r}(t))\right\|_{H^{N+1}(\mathbb{R})} < \varepsilon.
	\end{align*}
\end{theorem}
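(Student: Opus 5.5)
The argument follows the scheme already used for Theorem \ref{th1.1}, with one extra ingredient: a modulation argument that replaces the manifold $G_{\mathbf{c}}$ by the finite-dimensional family of $N$-solitons with fixed speeds $\mathbf{c}$ and free phases $\mathbf{r}\in\mathbb{R}^N$.

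\textbf{Step 1 (global existence).} By the local theory of Gui, Liu, Olver and Qu, the solution exists on $[0,T)$ in $H^{N+1}(\mathbb{R})$, since $N+1\ge 3>5/2$. The a priori closeness obtained in Step 4, together with the conservation of $E_0,\dots,E_N$, will bound $\|u(t)\|_{H^{N+1}}$ uniformly in time. Theorem \ref{thm:domain_conservation_laws} states that $E_N$ controls the $(N+1)$-st derivative; since $E_N$ is at least quadratic in $\partial_x^{N+1}u$ with coefficient bounded below near the soliton, a uniform $H^{N+1}$ bound follows. The blow-up alternative then gives $T=\infty$.

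\textbf{Step 2 (modulation).} For $u$ in a small $H^{N+1}$ tube around the orbit $\{U^{(N)}(t,\cdot;\mathbf{c},\mathbf{r})\}$, I apply the implicit function theorem to impose
\[
\langle u(t)-U^{(N)}(t,\cdot;\mathbf{c},\mathbf{r}(t)),\,\partial_{r_j}U^{(N)}(t,\cdot;\mathbf{c},\mathbf{r}(t))\rangle=0,\qquad j=1,\dots,N.
\]
This uniquely determines continuous shifts $\mathbf{r}(t)$. The Jacobian is the Gram matrix of the vectors $\partial_{r_j}U^{(N)}$, and it is uniformly invertible in $t$. The reason is that as $t\to\infty$ these vectors decouple into the translates $\phi_{c_j}'$, which become asymptotically orthogonal. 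The elastic asymptotics \eqref{asymptotic_N} make this uniform.

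\textbf{Step 3 (coercivity).} Write $w=u-U^{(N)}(\cdot;\mathbf{r}(t))$ and decompose $w=\sum_j a_j\partial_{c_j}U^{(N)}+w^\perp$. The coefficients $a_j$ are controlled by the conserved differences $E_k(u_0)-E_k(U^{(N)})$, using the nondegeneracy of the Hessian $D_N$. On the complement, the spectral count $\mathrm{in}(\mathcal{L}_N)=p(D_N)$ and the fact that the kernel of $\mathcal{L}_N$ is exactly $\mathrm{span}\{\partial_{r_j}U^{(N)}\}$ give the bound $\langle\mathcal{L}_N w^\perp,w^\perp\rangle\ge \kappa\|w^\perp\|_{L^2}^2$. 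Gårding's inequality for the strongly elliptic order-$2(N+1)$ operator $\mathcal{L}_N$ upgrades this to $\kappa'\|w^\perp\|_{H^{N+1}}^2$. Finally, a Taylor expansion of the conserved Lyapunov functional $\mathcal{S}_N$ yields
\[
\kappa'\|w\|_{H^{N+1}}^2\le \mathcal{S}_N(u_0)-\mathcal{S}_N(U^{(N)})+C\sum_k|E_k(u_0)-E_k(U^{(N)})|+C\|w\|_{H^{N+1}}^3,
\]
and the right-hand side is $O(\delta)$.

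\textbf{Step 4 (continuity).} A bootstrap closes the argument. The bound $\|w(t)\|<\varepsilon$ holds at $t=0$, and the inequality of Step 3 improves it to $\|w(t)\|<\varepsilon/2$ as long as the solution remains in the tube. Hence it holds for all $t\ge 0$.

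\textbf{Main obstacle.} The hardest part is making the constant $\kappa'$ uniform in $t$, because $\mathcal{L}_N$ depends on time. For large $t$ I would use the spectral convergence of $\mathcal{L}_N$ to the union of the single-soliton spectra. On compact time intervals I would use the isospectrality of $\mathcal{L}_N$ along the flow, which follows from the conjugation by the linearized flow (the same commutation with $\widehat{\mathcal{R}}$ that was used earlier). I would try the isospectrality argument first, since it gives uniformity directly.
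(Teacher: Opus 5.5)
Your outline works, and it shares the paper's skeleton: modulation by orthogonality to $\partial_{r_j}U^{(N)}$, G{\aa}rding's inequality for the order-$2(N+1)$ operator $\mathcal{L}_N$, conservation, and continuity. The coercivity step, however, is done differently.

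The paper never decomposes along the scaling directions. It works with the augmented Lagrangian $\Delta_N$ of \eqref{Delta}, lets the penalty term control the negative subspace through \eqref{ell_estimate}, and obtains \eqref{coercive_HN_final}. Since $U^{(N)}$ is a critical point of the conserved functional $\Delta_N$, it then gets $\|w(t)\|_{H^{N+1}}\le\sqrt{L/\alpha}\,\delta$ at once.

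You instead use the constrained argument in the style of Weinstein and Grillakis--Shatah--Strauss. The components $a_j$ along $\partial_{c_j}U^{(N)}$ are fixed by the conserved values of $E_0,\dots,E_{N-1}$, through invertibility of $(\partial_{c_j}E_k)$ (essentially the matrix $B$ in the proof of Lemma \ref{lem2.1}). The index condition $\mathrm{in}(\mathcal{L}_N)=p(D_N)$ enters only through its direct consequence: $\mathcal{L}_N$ is nonnegative on the constraint tangent space, with kernel spanned by the translations.

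The paper's route packages everything into one Lyapunov functional and gives Lipschitz dependence on $\delta$ immediately. Yours makes the role of each conservation law explicit and avoids \eqref{ell_estimate}, but as written it only gives $\|w\|=O(\sqrt{\delta})$. You can recover $O(\delta)$: differentiating \eqref{1.20_N} in $c_j$ gives $\mathcal{L}_N\partial_{c_j}U^{(N)}\in\mathrm{span}\{E_k'(U^{(N)})\}_{k=0}^{N-1}$. So the cross terms between $\sum_j a_j\partial_{c_j}U^{(N)}$ and $w^\perp$ vanish, and only $O(|a|^2)$ survives. You also address global existence and the uniformity of the constants in $t$, which the paper takes for granted.

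Some things to tighten:

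\textbf{The complement in Step 3.} It must be the constraint tangent space $\{v:\langle E_k'(U^{(N)}),v\rangle=0,\ k=0,\dots,N-1\}$. On an arbitrary complement of $\mathrm{span}\{\partial_{c_j}U^{(N)}\}$ your lower bound fails, because $\mathcal{L}_N$ has $[\frac{N+1}{2}]$ negative directions.

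\textbf{The scaling directions.} Take $\partial_{c_j}$ at fixed soliton positions. In the parametrization $U^{(N)}(t,\cdot\,;\mathbf{c},\mathbf{r})$, the $c_j$-derivative contains a term of size $t$ along the translation directions, which destroys uniformity of the decomposition.

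\textbf{The isospectrality route.} This does not work. The linearized flow $\Phi(t)$ only gives the congruence $\Phi(t)^*\mathcal{L}_N(t)\Phi(t)=\mathcal{L}_N(0)$. That preserves inertia (which is how the paper evaluates it as $t\to\infty$), but not, in general, the spectrum. Moreover, $\Phi(t)$ grows linearly along the generalized kernel, so no uniform constant follows. Use instead:
\begin{itemize}
\item on compact sets of configurations, coercivity at each configuration (from the common inertia) together with continuity of $\mathcal{L}_N$ in the configuration;
\item for well-separated configurations, decoupling.
\end{itemize}
Apply this to every phase vector the modulated profile can reach, not only to the single trajectory $U^{(N)}(t,\cdot\,;\mathbf{c},\mathbf{x}_0)$.

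\textbf{The case $N=1$.} Your ``$N+1\ge3$'' in Step 1 excludes $N=1$, where $H^2$ lies below the $s>\frac52$ threshold of the cited local theory. The paper does not treat this case either.
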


\begin{re}
Notably, for the case $N=2$, Theorem \ref{th1.2} establishes the orbital stability of mCH 2-solitons in the energy space $H^3(\mathbb{R})$ with respect to the primary velocity variable $u$. This provides a refined regularity threshold compared to the pioneering work of Li, Liu, and Zhu \cite{LLZ2}, where the 2-soliton stability was established in the space $H^2(\mathbb{R})$ for the momentum density $m$ (which is mathematically equivalent to $H^4(\mathbb{R})$ for the primary variable $u$). In \cite{LLZ2}, the stability analysis relies on the conserved quantity $F_3(m)$, where the explicit presence of the $m_{xx}$ term (corresponding to $\partial_x^4 u$) inherently dictates $H^4(\mathbb{R})$ as the requisite admissible space for $u$.
\end{re}

\begin{re}
Our framework leverages the intrinsic bi-Hamiltonian recursion to generate the hierarchy $\{E_k(u)\}$. According to the domain characterization in Theorem \ref{thm:domain_conservation_laws}, the functional $E_2(u)$ used for the 2-soliton case explicitly involves spatial derivatives of $u$ up to the third order. Consequently, by virtue of the general stability criteria in Theorem \ref{th1.2}, the mCH 2-soliton stability can be established in the lower-regularity space $H^3(\mathbb{R})$ with respect to the primary velocity variable $u$. This result complements the foundational advancements in \cite{LLZ2} by relaxing the initial data constraints on $u$ while offering a unified methodology for general $N$-soliton stability.
\end{re}

Following the statement of the main theorems, we outline the organization of the subsequent proofs. Our general strategy hinges on a detailed spectral analysis of the recursion operators associated with the $N$-soliton solutions $U^{(N)}$. By verifying that the constructed augmented Lyapunov functional $\mathcal{S}_N$ satisfies the Maddocks-Sachs stability criterion, we employ a modulation argument to conclude both dynamical and orbital stability.

The proof transforms the nonlinear stability problem into a spectral analysis of a specific linearized operator, exploiting the mCH equation's bi-Hamiltonian recursion operators to construct a suitable Lyapunov functional. The paper is organized as follows:

In Section \ref{sec.2}, we review the IST framework and the Hamiltonian formulation. We construct the recursion operator $\widehat{\mathcal{R}}$ to generate the conservation laws $\{E_n\}_{n=0}^N$, variationally characterize the $N$-solitons as constrained critical points of $\mathcal{S}_N$, and compute the positive inertia index $p(D_N)$ of the associated Hessian matrix.

Section \ref{sec.3} focuses on the spectral analysis of the high-order linearized operator $\mathcal{L}_N = \mathcal{S}_N''(U^{(N)})$. Utilizing IST squared eigenfunctions and the asymptotic decoupling of $N$-solitons as $t \to \infty$, we determine the spectrum of $\mathcal{L}_N$. This rigorously verifies the critical spectral condition $\mathrm{in}(\mathcal{L}_N) = p(D_N) = \left[ \frac{N+1}{2}\right]$.

In Section \ref{sec.4}, we finalize the stability proofs via constrained variational principles. We establish local coercivity in $H^{N+1}(\mathbb{R})$ by applying a generalized G{\aa}rding's inequality to $\mathcal{L}_N$, and employ a dynamic modulation argument to handle translation invariance, which extends the local bound to a global $H^{N+1}$ estimate and concludes both dynamical and orbital stability.

\section{Preliminaries}\label{sec.2}
In this section, we present fundamental results from the inverse scattering transform
theory for the mCH equation (\ref{mch}). The section is organized as follows.
\subsection{Spectral problem}
As a significant approach for obtaining the orbital stability of the mCH equation, we first introduce the spectral problem of the mCH equation and its inverse scattering transform.

The completely integrable mCH equation \eqref{mch} possesses the following Lax pair
\begin{align}\label{lax}
	\Phi_x(x,t,k)=U(x,t,k)\Phi(x,t,k),\\
	\Phi_t(x,t,k)=V(x,t,k)\Phi(x,t,k),
\end{align}
where $k$ is the spectral parameters, the matrix function $U$ and $V$ satisfy the zero- curvature equation $U_t-V_x+UV-VU=0$ and their can be expressed respectively as
\begin{align}
	&U(x,t,\lambda)=\begin{pmatrix}
		-i\mu & \frac{1}{2}\lambda m(x,t) \\
		-\frac{1}{2}\lambda m(x,t) & i\mu
	\end{pmatrix},\\
	&V(x,t,\lambda)=\begin{pmatrix}
		\frac{2i\mu}{\lambda^2}+i\mu\left(u^2-u_x^2\right) & -\frac{u-2i\mu u_x}{\lambda}-\frac{1}{2}\lambda\left(u^2-u_x^2\right)m \\
		\frac{u+2i\mu u_x}{\lambda}+\frac{1}{2}\lambda\left(u^2-u_x^2\right)m & -\frac{2i\mu}{\lambda^2}-i\mu\left(u^2-u_x^2\right)
	\end{pmatrix},
\end{align}
and
\begin{align}
	\mu(k)=\frac{1}{4}\left(k-\frac{1}{k}\right),\quad \lambda(k)=\frac{1}{\sqrt{2\gamma}}\left(k+\frac{1}{k}\right).
\end{align}
Furthermore, we can easily find that
\begin{align}
	\mu=\mu(\lambda,\gamma)=-\frac{i}{2}\sqrt{1-\frac{1}{2}\gamma\lambda^2}.
\end{align}

We consider solutions $u(t,x)$ with decaying boundary conditions as $|x|\to \infty$ and for the continuous spectrum $\mu(k)\in\mathbb{R}$, the spatial spectral problem \eqref{lax} admits fundamental matrix solutions $\Phi(x,k)$ and $\Psi(x,k)$, satisfying
\begin{align}\label{L2}
	\lim_{x\to-\infty}e^{i\mu y\sigma_3}\Phi(x,k)=\mathbb{I},\quad
	\lim_{x\to+\infty}e^{i\mu y\sigma_3}\Psi(x,k)=\mathbb{I},
\end{align}
where $y=x+\int_{+\infty}^{x}(\sqrt{2m^2/\gamma+1}-1)ds$ and $\sigma_3=\operatorname{diag}\left(1,-1\right)$. These are the so-called Jost solutions. Denote the columns of the Jost solutions by
\begin{align}
	\Phi=[\phi^+,\phi^-]=\begin{pmatrix}
		\phi^+_1 & \phi^-_1 \\
		\phi^+_2 & \phi^-_2
	\end{pmatrix},\quad \Psi=[\psi^-,\psi^+]=\begin{pmatrix}
		\psi^-_1 & \psi^+_1 \\
		\psi^-_2 & \psi^+_2
	\end{pmatrix}.
\end{align}
Using the relation $\mathrm{Im} \mu=\frac{1+|k|^2}{4|k|^2}\mathrm{Im}k$, standard arguments show that $\phi^+(x,\lambda)$ and $\psi^+(x,\lambda)$ are analytic for $\mathrm{Im} k>0$, while $\phi^-(x,\lambda)$ and $\psi^-(x,\lambda)$ are analytic for $\mathrm{Im}k<0$.

Moreover, the smooth solitary wave solution $\phi_c$ of the mCH equation \eqref{mch} was derived by Matsuno \cite{Ma} via the bilinear method. For our purposes, it can be expressed as
\begin{align}
	&u=\frac{16i\mu_1}{\gamma\lambda_1^3}\operatorname{sgn}(\lambda_1)\frac{\cosh(\xi)}{\cosh(2\xi)+\frac{1-4\mu_1^2}{1+4\mu_1^2}},\\
	&\xi=2i\mu_1\left(x-\frac{2t}{\lambda_1^2}-\ln\left(\frac{1+\frac{1+2i\mu_1}{1-2i\mu_1}e^{2\xi}}{1+\frac{1-2i\mu_1}{1+2i\mu_1}e^{2\xi}}\right)\right),
\end{align}
where $\mu_1=\mu(k_1)$, $\lambda_1=\lambda(k_1)$, and the discrete spectral point $k_1$ satisfies $|k_1|=1$ and $a(k_1)=0$. Consequently, one has
$c=\frac{2}{\lambda_1^2}$.

Furthermore, from our other work \cite{TAT}, the smooth solitary-wave solution $\phi_c$ admits the complete set
\begin{align}\label{complete}
		\Bigg\{  \left(1-\partial_x^2\right)\left((\psi^-_{1})^2 + (\psi^-_{2})^2\right)(x,k),\;\text{for } k\in \mathbb{R}^+; \quad &\left(1-\partial_x^2\right)\left((\psi^-_{1})^2 + (\psi^-_{2})^2\right)(x,k_1^-); \notag \\
		& \left(1-\partial_x^2\right)\frac{d}{dk}\left((\psi^-_{1})^2 + (\psi^-_{2})^2\right)(x,k_1^-) \Bigg\}
\end{align}
and the other complete set
\begin{align}
	\bigg\lbrace & \left((\psi^-_{1})^2 + (\psi^-_{2})^2\right)_x(x,k),\;\text{for } k\in \mathbb{R}^+; \notag \\
	& \left((\psi^-_{1})^2 + (\psi^-_{2})^2\right)_x(x,k_1^-);\; \frac{d}{dk}\left((\psi^-_{1})^2 + (\psi^-_{2})^2\right)_x(x,k_1^-) \bigg\rbrace .
\end{align}
where $k_1^-$ is the discrete spectral point on the unit circle in the fourth quadrant corresponding to $\phi_c$. This relation will be helpful for our subsequent research.

\subsection{Hamiltonian formation}
As the foundation for analyzing the multi-soliton stability of the mCH equation (\ref{mch}), it is crucial to determine the domain of the conservation laws so as to define the space of soliton stability. Hence, we will proceed to establish the proof of Theorem \ref{thm:domain_conservation_laws}

To rigorously establish this theorem, we divide the proof into the following lemmas. First, we determine the exact highest order of spatial derivatives in the variational derivative $\frac{\delta E_k}{\delta u}$ and verify its non-degeneracy.
	\begin{lemma}\label{lem:variational_derivative_order}
		For any integer $k \ge 0$, the variational derivative $G_k := \frac{\delta E_k}{\delta u}$ contains spatial derivatives of $u$ up to exactly order $2k+2$. Moreover, the principal symbol of the highest-order differential operator in $G_k$ is strictly positive and bounded away from zero.
	\end{lemma}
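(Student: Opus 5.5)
The proof goes by induction on $k$, driven by the recursion $G_k=\widehat{\mathcal{R}}(u)G_{k-1}$ with $G_k=\frac{\delta E_k}{\delta u}$. I will track two things along the way: the exact differential order of $G_k$ in $u$, and the coefficient $a_k$ of the top derivative $\partial_x^{2k+2}u$, i.e. the coefficient in the linearization of $G_k$. The base case is explicit: $G_0=2(u-u_{xx})=2m$ has order $2$ with top coefficient $-2$. Its principal symbol is therefore $2\xi^2>0$.

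For the inductive step I first rewrite the nonlocal factor of $\widehat{\mathcal{R}}(u)=-\frac{2}{\gamma}(1-\partial_x^2)\bigl(1-\frac{m}{M}\partial_x^{-1}\frac{m}{M}\partial_x\bigr)$. Integrating by parts inside $\partial_x^{-1}$ gives, for any $f$,
\begin{align*}
\Bigl(1-\frac{m}{M}\partial_x^{-1}\frac{m}{M}\partial_x\Bigr)f=\frac{\gamma}{2M^2}\,f+\frac{m}{M}\,\partial_x^{-1}\Bigl(\bigl(\tfrac{m}{M}\bigr)_x f\Bigr),
\end{align*}
using $1-m^2/M^2=\gamma/(2M^2)$. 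The leading part is multiplication by a strictly positive function. The remainder is smoothing in $f$. It is a priori nonlocal, but since $E_k$ is a local conserved density (it is produced by the bi-Hamiltonian hierarchy), $G_k$ must be local. I would check this concretely on $k=1,2$ using the explicit $E_1,E_2$.

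Applying $-\frac{2}{\gamma}(1-\partial_x^2)$ raises the order by two. On the top term this gives $a_k=a_{k-1}M^{-2}$, so $a_k=-2M^{-2k}$ with constant sign. The principal symbol is then $2M^{-2k}\xi^{2k+2}$, under the sign convention (fixed by $k=0$) that absorbs $(-1)^{k+1}$ from $(i\xi)^{2k+2}$; this is compatible with the $(-1)^N$ normalization in $\mathcal{S}_N$. Because $u\in H^{k+1}\subset L^\infty$ for $k\ge 1$ and $M\ge\sqrt{\gamma/2}$, we get $|a_k|\ge 2(\|m\|_\infty^2+\gamma/2)^{-k}$, bounded away from zero on bounded sets of $m$, and also $|a_k|\le 2(2/\gamma)^k$.

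The main obstacle is bookkeeping the terms where derivatives fall on the coefficients $m/M$ and $M^{-2}$. Since $m=u-u_{xx}$, each such derivative costs two orders of $u$. For example, $\partial_x^2\bigl(\frac{\gamma}{2M^2}f\bigr)$ contains $(M^{-2})_{xx}f$, which involves $\partial_x^4u$, and the remainder involves $(m/M)_x\sim\partial_x^3u$. I must show that none of these reaches order $2k+2$ linearly, or exceeds it. For $k\ge2$ this follows by comparing $4<2k+2$ with $f$ of order $2k$. The case $k=1$ is borderline, since there $4=2k+2$. I would settle it directly from the explicit $E_1=\int(-2\sqrt{2/\gamma}M+2)dx$, computing $\frac{\delta E_1}{\delta u}=(1-\partial_x^2)\frac{\delta E_1}{\delta m}$ and reading off the coefficient of $u_{xxxx}$. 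Finally, I would confirm that this coefficient equals the inductive prediction $-2M^{-2}$ up to positive factors, so that positivity and the lower bound hold uniformly across all $k$.
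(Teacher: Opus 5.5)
Your argument is the paper's own induction: you push the top coefficient of $G_k=\widehat{\mathcal{R}}(u)G_{k-1}$ through $1-m^2/M^2=\gamma/(2M^2)$. Your conclusion is right, but the closed formula $a_k=-2M^{-2k}$ is not.

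The borderline case you flagged is exactly where the paper's proof also slips. The paper asserts $\mathcal{P}(G_k)=M^{-2}\partial_x^2G_{k-1}$ for every $k\ge1$, counting only derivatives that land on $G_{k-1}$. But $m/M$ itself carries two derivatives of $u$. At $k=1$, the terms in which $\partial_x^2$ falls on $\gamma/(2M^2)$ contribute to the coefficient of $\partial_x^4u$. So does the nonlocal piece $\frac{m}{M}\partial_x^{-1}\bigl((m/M)_xG_0\bigr)$ of your decomposition.

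Carrying out the check you propose confirms this. Since $\partial_x^{-1}\bigl(\frac{m}{M}m_x\bigr)=M-\sqrt{\gamma/2}$, one gets $G_1=\widehat{\mathcal{R}}(u)(2m)=-2\sqrt{2/\gamma}\,(1-\partial_x^2)(m/M)$, which is indeed $\delta E_1/\delta u$. Because $\partial_m(m/M)=\gamma/(2M^3)$, the coefficient of $\partial_x^4u$ is $a_1=-\sqrt{2\gamma}\,M^{-3}$. This differs from $-2M^{-2}$ by the non-constant positive factor $\sqrt{\gamma/2}/M\in(0,1]$.

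So organise the proof with $k=1$ as a separately computed base case, and use your recursion $a_k=a_{k-1}M^{-2}$ only for $k\ge2$. There the order-$4$ terms from derivatives on coefficients and the order-$(2k+1)$ contribution of the nonlocal piece are genuinely subordinate. This gives $a_k=-\sqrt{2\gamma}\,M^{-2k-1}$ for $k\ge1$. You can cross-check it against the $\sqrt{\gamma/2}\,m_x^2/M^5$ term of $E_2$, which yields $a_2=-\sqrt{2\gamma}\,M^{-5}$.

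With this correction the bounds change as follows:
\begin{itemize}
\item The upper bound $|a_k|\le 2(2/\gamma)^k$ survives unchanged.
\item The lower bound becomes $\sqrt{2\gamma}\,(\|m\|_{L^\infty}^2+\gamma/2)^{-k-1/2}$.
\end{itemize}

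Your remark that the lower bound requires control of $\|m\|_{L^\infty}$ is correct. The paper's argument does not justify it: $M^2\ge\gamma/2$ bounds $M^{-2}$ from above, not from below. Note, however, that $H^{k+1}$ controls $\|m\|_{L^\infty}$ only for $k\ge2$, and not through $u\in L^\infty$ as you wrote.

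Finally, ``strictly positive principal symbol'' means positivity of $-a_k$, in your version and in the paper's alike. The literal symbol $a_k(i\xi)^{2k+2}$ alternates in sign with $k$, so this is a shared convention rather than a discrepancy.
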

	\begin{proof}
		We proceed by mathematical induction on $k$. For the base case $k=0$, the fundamental energy functional is
		\begin{align}
			E_0(u) = \int_{\mathbb{R}} (u^2 + u_x^2) dx
		\end{align}
		 The variational derivative is computed directly as $G_0 = 2(u - u_{xx}) = 2m$. The highest-order derivative is $u_{xx}$, and its coefficient is $-2$. Thus, the order is verified, and the base case holds.
		
Assume inductively that for the $(k-1)$-th order, $G_{k-1}$ is a differential polynomial with respect to $u$ of exact order $2k$. Utilizing the Lenard-Magri recurrence relation $\hat{\mathcal{R}}(u) G_{k-1} = G_k$, we obtain
		\begin{equation}\label{eq:recurrence_expand}
			G_k = -\frac{2}{\gamma}(1-\partial_x^2) G_{k-1} + \frac{2}{\gamma}(1-\partial_x^2)\left[ \frac{m}{M}\partial_x^{-1}\left(\frac{m}{M} \partial_x G_{k-1}\right) \right].
		\end{equation}
To extract the principal part, we track the action of the second-order differential operator $\partial_x^2$. Let us denote $f(x) := \frac{m}{M}$. By applying the fundamental theorem of calculus and the Leibniz rule, the second derivative of the nonlocal term in \eqref{eq:recurrence_expand} expands as follows
		\begin{equation}\label{eq:principal_expansion}
			\begin{aligned}
				\partial_x^2 \left[ f \partial_x^{-1}\left( f \partial_x G_{k-1} \right) \right]=f_{xx} \partial_x^{-1}\left( f \partial_x G_{k-1} \right) + 2 f_x f \partial_x G_{k-1} + f^2 \partial_x^2 G_{k-1}.
			\end{aligned}
		\end{equation}
Notice that the nonlocal integration operator $\partial_x^{-1}$ is pseudodifferential of order $-1$, which decreases the derivative order. Consequently, the only term in \eqref{eq:principal_expansion} that elevates the derivative order of $G_{k-1}$ by precisely two is $f^2 \partial_x^2 G_{k-1}$. Substituting this back into \eqref{eq:recurrence_expand}, the principal part $\mathcal{P}(G_k)$ of the $k$-th variational derivative simplifies significantly
		\begin{equation}\label{eq:principal_part_simplified}
			\mathcal{P}(G_k) = \frac{2}{\gamma} \partial_x^2 G_{k-1} - \frac{2}{\gamma} \left(\frac{m}{M}\right)^2 \partial_x^2 G_{k-1} = \frac{2}{\gamma} \left( 1 - \frac{m^2}{M^2} \right) \partial_x^2 G_{k-1}.
		\end{equation}
		Recalling the definition $M^2 = m^2 + \frac{\gamma}{2}$, we have $1 - \frac{m^2}{M^2} = \frac{\gamma/2}{M^2}$. Thus, the principal part becomes
		\begin{equation}
			\mathcal{P}(G_k) = \frac{1}{M^2} \partial_x^2 G_{k-1}.
		\end{equation}
		Crucially, since the linear dispersion parameter $\gamma > 0$, we have $M^2 \ge \frac{\gamma}{2} > 0$. This guarantees that the leading coefficient $\frac{1}{M^2}$ is bounded globally and strictly isolated from zero. The operator therefore never degenerates. By the induction hypothesis, the order of $G_{k-1}$ is $2k$, meaning the highest derivative in $G_k$ is precisely $2k+2$. This completes the proof of Lemma \ref{lem:variational_derivative_order}.
	\end{proof}
	
	\begin{lemma}\label{lem:energy_density_order}
		The energy density $\mathcal{E}_k(u, u_x, \dots, u^{(p)})$ of the $k$-th conservation law $E_k(u)$ depends on $u$ and its spatial derivatives up to exactly order $k+1$.
	\end{lemma}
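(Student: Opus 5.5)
The plan is to deduce Lemma \ref{lem:energy_density_order} from Lemma \ref{lem:variational_derivative_order} via the Euler--Lagrange operator and the standard order calculus for variational derivatives. If a local density $\mathcal{E}_k$ depends on derivatives of $u$ up to order $p$, then
\begin{align*}
\frac{\delta E_k}{\delta u} = \sum_{j=0}^{p} (-\partial_x)^j \frac{\partial \mathcal{E}_k}{\partial u^{(j)}}.
\end{align*}
The highest derivative of $u$ can enter only through the term $(-1)^p\partial_x^p\,\partial\mathcal{E}_k/\partial u^{(p)}$. After reducing $\mathcal{E}_k$ modulo total derivatives, this term has order at most $2p$. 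Its coefficient of $u^{(2p)}$ equals $(-1)^p\,\partial^2\mathcal{E}_k/\partial (u^{(p)})^2$.

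The first step is to normalize the density. Adding total $x$-derivatives does not change $E_k$, so I will use integration by parts to remove every dependence on $u^{(p)}$ that is linear or affine modulo lower-order terms. This gives a density that is \emph{nondegenerate} at top order, meaning $\partial^2\mathcal{E}_k/\partial(u^{(p)})^2 \not\equiv 0$. For such a density, the variational derivative has exact order $2p$. Comparing with the exact order $2k+2$ in Lemma \ref{lem:variational_derivative_order}, we get $2p = 2k+2$, hence $p = k+1$.

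The second step is to rule out degenerate situations. If the top-order coefficient vanished identically after normalization, the order of $G_k$ would drop below $2p$. It could then match $2k+2$ with $p > k+1$ only if the density were degenerate. The standard fact here is that a density depending on $u^{(p)}$ in an irreducible way, i.e.\ one not equivalent to lower order modulo total derivatives, has nonzero Hessian in $u^{(p)}$. Lemma \ref{lem:variational_derivative_order} also states that the principal symbol of $G_k$ is positive, and up to sign this symbol is exactly $\partial^2\mathcal{E}_k/\partial(u^{(p)})^2$. So the normalized density has positive-definite dependence on $u^{(k+1)}$. Evaluated at $u=0$, one expects it to look like $c_k M^{-\alpha}(u^{(k+1)})^2$, consistent with $E_2 \ni m_x^2/M^5$, where $m_x$ contains $u_{xxx}$.

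The main obstacle is that the $E_k$ are a priori defined only through their gradients. That they come from a local density at all needs justification, because $\widehat{\mathcal{R}}(u)$ contains $\partial_x^{-1}$. I would use the standard Lenard--Magri argument: $\widehat{\mathcal{R}}$ is hereditary (Nijenhuis), built from the compatible pair $\mathcal{K}_0,\mathcal{K}_1$. Hence each $G_k$ is a gradient, i.e.\ its Fr\'echet derivative is self-adjoint. By the homotopy formula, $E_k(u)=\int_0^1\int_{\mathbb{R}} u\,G_k(\theta u)\,dx\,d\theta$. Locality of $G_k$ follows because $\frac{m}{M}\partial_x G_{k-1}$ is an exact derivative at each step, as the explicit $E_1,E_2$ illustrate. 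I would verify this inductively from the symmetry of the Fr\'echet derivative. After this, the order comparison above finishes the proof.
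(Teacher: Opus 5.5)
Your proposal follows the paper's argument: expand $\frac{\delta E_k}{\delta u}$ by the Euler--Lagrange formula, note that only $(-1)^p\partial_x^p\bigl(\partial\mathcal{E}_k/\partial u^{(p)}\bigr)$ can reach order $2p$, and match this with the exact order $2k+2$ from Lemma \ref{lem:variational_derivative_order}. Your preliminary reduction modulo total derivatives guarantees $\partial^2\mathcal{E}_k/\partial(u^{(p)})^2\not\equiv 0$, so that order $2p$ is actually attained; this makes explicit a nondegeneracy the paper uses tacitly.

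The locality discussion is not needed, since the lemma presupposes a local density. Your aside about positive-definite dependence on $u^{(k+1)}$ is off by the sign $(-1)^k$: for instance $\partial^2\mathcal{E}_1/\partial u_{xx}^2=-\sqrt{2\gamma}/M^3$. This is harmless for the order count.
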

	
	\begin{proof}
		Let $p$ be the highest spatial derivative appearing in the energy density $\mathcal{E}_k$. By the standard multi-variable calculus of variations, the Euler-Lagrange derivative is given by the formal series
		\begin{equation}
			\frac{\delta E_k}{\delta u} = \sum_{j=0}^{p} (-1)^j \partial_x^j \left( \frac{\partial \mathcal{E}_k}{\partial u^{(j)}} \right).
		\end{equation}
		The highest-order spatial derivative generated by this expansion inherently stems from the term $(-1)^p \partial_x^p \left( \frac{\partial \mathcal{E}_k}{\partial u^{(p)}} \right)$, which yields a derivative of order $p + p = 2p$. According to Lemma \ref{lem:variational_derivative_order}, the exact highest order of $\frac{\delta E_k}{\delta u}$ is established as $2k+2$. Equating the orders gives $2p = 2k+2$, which implies $p = k+1$. Thus, $\mathcal{E}_k$ explicitly depends on $u^{(k+1)}$.
	\end{proof}
	\begin{lemma}\label{lem:well_posedness_domain}
		The functional $E_k(u) = \int_{\mathbb{R}} \mathcal{E}_k dx$ is well-posed and continuously differentiable if and only if $u \in H^{k+1}(\mathbb{R})$.
	\end{lemma}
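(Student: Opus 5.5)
The plan is to prove the two directions separately, using the structure of $\mathcal{E}_k$ given by Lemma \ref{lem:energy_density_order} together with a quadratic-form analysis of its top-order part. First I would pin down the precise form of the density. Under Lemma \ref{lem:variational_derivative_order} the principal part of $G_k$ is $\frac{1}{M^2}\partial_x^2 G_{k-1}$. Iterating this from $G_0=2m$ gives a principal part of the form $c_k\,\partial_x^{2k+2}u$ times a positive coefficient built from powers of $M^{-2}$. For $E_k$ this should correspond to a top-order density term
\begin{align*}
\mathcal{E}_k = a_k(m)\,\big(\partial_x^{k+1}u\big)^2 + \mathcal{N}_k\big(u,\dots,\partial_x^{k}u\big),
\end{align*}
with $a_k$ bounded and bounded below by a positive constant, since $\gamma/2\le M^2$. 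This is the same mechanism that produces $m_x^2/M^5$ in $E_2$.

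I would then identify the lower-order remainder $\mathcal{N}_k$ by induction on the recursion. This remainder is a sum of two kinds of terms: smooth bounded functions of $m/M$ and $1/M$, multiplied by monomials in derivatives of $u$ of total differential weight at most $2k+2$, each factor of order at most $k$; and terms such as $-2\sqrt{2/\gamma}\,M+2$, which vanish quadratically at $m=0$, as in $E_1$. The point is that every term is at least quadratic in $u$ near $u=0$. This is the property that gives integrability on $\mathbb{R}$ rather than only local integrability.

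For sufficiency, take $u\in H^{k+1}$. Then $m\in H^{k-1}$. By Sobolev embedding $u,\dots,\partial_x^{k}u\in L^\infty$, and every coefficient function is bounded. Each monomial in $\mathcal{N}_k$ contains at least two factors, so it is controlled by $\|\cdot\|_{L^\infty}^{j-2}\|\cdot\|_{L^2}^2$. For $k=0$ this breaks down because $m$ involves $u_{xx}$; there I would instead use that $E_0$ is the $H^1$ norm directly. The top term is bounded by $\|a_k\|_\infty\|\partial_x^{k+1}u\|_{L^2}^2$. Continuous (Fr\'echet) differentiability then follows from dominated convergence, applied to the derivative of the density, which is again a polynomial with bounded smooth coefficients.

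For necessity, suppose $E_k(u)$ is finite and $C^1$ on a space $X$ containing the solitons. The coercive lower bound $a_k\ge a_*>0$ gives
\begin{align*}
a_*\|\partial_x^{k+1}u\|_{L^2}^2 \le E_k(u) + \Big|\int \mathcal{N}_k\,dx\Big|.
\end{align*}
I would handle $\mathcal{N}_k$ by induction, as a lower-order term. Combined with finiteness of $E_0,\dots,E_{k-1}$ and Gagliardo--Nirenberg interpolation, this forces $u\in H^{k+1}$. I would also add a scaling or test-function example showing that $E_k$ is infinite for some $u\in H^{k+1-\epsilon}$, so that $H^{k+1}$ cannot be lowered.

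I expect the main obstacle to be twofold. The first part is rigorously extracting the quadratic top-order structure of $\mathcal{E}_k$ from the nonlocal recursion, since $\partial_x^{-1}$ could in principle generate nonlocal densities. I would first try to prove by induction that $G_k$ is local, using the fact that $\frac{m}{M}\partial_x G_{k-1}$ is an exact derivative; this is the Lenard--Magri exactness coming from the bi-Hamiltonian structure. The second part is the necessity direction, which is delicate as stated. I would interpret ``if and only if'' as \emph{natural domain}: $H^{k+1}$ is the largest Sobolev space $H^s$ on which $E_k$ is finite and $C^1$.
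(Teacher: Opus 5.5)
Your sufficiency half is the paper's argument: negative powers of $M$ are bounded since $M\ge\sqrt{\gamma/2}$, and Sobolev embedding controls $u,\dots,\partial_x^k u$. Your remark that every term of $\mathcal{E}_k$ vanishes at least quadratically at $u=0$ supplies what integrability over all of $\mathbb{R}$ actually needs, which the paper passes over.

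The gap is in the necessity direction. The bound $M^2\ge\gamma/2$ controls negative powers of $M$ from \emph{above}, not from below. The top-order coefficient is a negative power of $M$ that decays as $|m|\to\infty$. Concretely, the coefficient of $-\partial_x^{2k+2}u$ in $G_k$ is $\sqrt{2\gamma}\,M^{-3}$ for $k=1$ and $\sqrt{2\gamma}\,M^{-5}$ for $k=2$, matching $\sqrt{\gamma/2}\,m_x^2M^{-5}$ in $E_2$. These are not the values $2M^{-2}$ and $2M^{-4}$ that the recursion in Lemma \ref{lem:variational_derivative_order} suggests. The reason is that at $k=1$ the discarded term $f_{xx}\partial_x^{-1}(f\partial_x G_0)$, with $f=m/M$, is itself of fourth order, because $m$ contains $u_{xx}$. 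For $k=1$ the density $\mathcal{E}_1=2-2\sqrt{1+2m^2/\gamma}$ even grows only linearly in $|m|$. So $a_k\ge a_*>0$ holds only under an a priori bound $m\in L^\infty$, which is what you are trying to derive; your coercive inequality is circular.

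In fact the literal ``only if'' is false, so no induction or interpolation can close it. With a smooth cutoff $\chi$, put $m=\chi(x)|x|^{-\alpha}$ and $u=(1-\partial_x^2)^{-1}m$.
\begin{itemize}
\item For $\frac12\le\alpha<1$: $m\in L^1\setminus L^2$, so $u\in H^1\setminus H^2$, while $|\mathcal{E}_1|\le 2\sqrt{2/\gamma}\,|m|$ makes $E_1(u)$ finite.
\item For $\frac13<\alpha<\frac12$: $u\in H^2\setminus H^3$, while near $x=0$ one has $m_x^2M^{-5}\le m_x^2|m|^{-5}\lesssim|x|^{3\alpha-2}$ and $m^2M^{-1}\le|m|$. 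Hence $E_0,E_1,E_2$ are all finite, which defeats exactly your plan of combining finiteness of $E_0,\dots,E_{k-1}$ with Gagliardo--Nirenberg.
\end{itemize}

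Only your fallback reading survives: no $H^s$ with $s<k+1$ makes $E_k$ finite everywhere. Any witness for it must keep the top coefficient nondegenerate. For $k\ge2$, take $u\in H^{k+1-\epsilon}\setminus H^{k+1}$ with $\epsilon<\frac12$; then $m\in H^{k-1-\epsilon}\subset L^\infty$, so $M$ is bounded above. For $k=1$, you need $u\in H^s$ with $s<2$ and $u_{xx}\notin L^1_{\mathrm{loc}}$.

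The paper does not supply this step either. It merely asserts that finiteness ``naturally requires'' $u^{(k+1)}\in L^2$. This implicitly relies on the claim in Lemma \ref{lem:variational_derivative_order} that the leading coefficient is ``strictly isolated from zero,'' which reverses the same inequality.
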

	
	\begin{proof}
		By Lemma \ref{lem:energy_density_order}, the leading quadratic term of the energy density $\mathcal{E}_k$ involves $(u^{(k+1)})^2$. For the integral $E_k(u)$ to be finite over the entire real line $\mathbb{R}$, we naturally require $u^{(k+1)} \in L^2(\mathbb{R})$, which necessitates $u \in H^{k+1}(\mathbb{R})$. Furthermore, the nonlinear fractional terms generated by the recursion operator strictly manifest as negative powers of $M$. Since $M \ge \sqrt{\gamma/2} > 0$, the denominator is globally bounded away from zero, entirely precluding the formation of singularities. By the Sobolev embedding theorem $H^{s}(\mathbb{R}) \hookrightarrow L^\infty(\mathbb{R})$ for $s > 1/2$, all lower-order derivatives $u, u_x, \dots, u^{(k)}$ are uniformly bounded and continuous. Therefore, the integrability is perfectly guaranteed in $H^{k+1}(\mathbb{R})$.
	\end{proof}

	\begin{proof}[Proof of Theorem \ref{thm:domain_conservation_laws}]
	Combining Lemma \ref{lem:variational_derivative_order}, Lemma \ref{lem:energy_density_order}, and Lemma \ref{lem:well_posedness_domain}, Theorem \ref{thm:domain_conservation_laws} is fully established.
\end{proof}
\begin{re}\label{rem.domain}
	The regularity $H^{k+1}(\mathbb{R})$ established in Theorem \ref{thm:domain_conservation_laws} is the optimal energy space for the functional $E_k(u)$. This exact domain characterization provides the essential analytical foundation for the subsequent $N$-soliton stability analysis in Section \ref{sec.4}, explaining why the orbital stability of $N$-solitons naturally resides in the Sobolev space $H^{N+1}(\mathbb{R})$.
\end{re}
Next, we will show that the mCH equation, through the application of the recursion operator, has an expression in terms of the wave speed $c$, which will be of great help for our subsequent spectral analysis. We denote $E(u):=E_0(u)$, then the recursion relations \eqref{dggxm} can be equivalently expressed in terms of $u$ as
\begin{align}\label{hamilton}
	\frac{\delta E_k}{\delta u}=\widehat{\mathcal{R}}(u)\frac{\delta E_{k-1}}{\delta u}=\cdots=\widehat{\mathcal{R}}^k(u)\frac{\delta E_0}{\delta u},
\end{align}
where
\begin{align}\label{inverse}
	\widehat{\mathcal{R}}(u)=\left(1-\partial_x^2\right)\widehat{\mathcal{K}}[m]\left(1-\partial_x^2\right)^{-1}=-\frac{2}{\gamma}\left(1-\partial_x^2\right)\left(1-\frac{m}{M}\partial_x^{-1}\frac{m}{M}\partial_x\right),
\end{align}
and the inverse of $\widehat{\mathcal{R}}(u)$ can be written as
\begin{align}
	\mathcal{R}(u)=\left(1-\partial_x^2\right)\mathcal{K}[m]\left(1-\partial_x^2\right)^{-1}=-\left(m\partial_x^{-1}m\partial_x+\frac{\gamma}{2}\right)\left(1-\partial_x^2\right)^{-1}.
\end{align}
Near the smooth solitary wave solution \(\phi_c\), the conserved quantities $E_k, k\in 1, 2, \cdots,N$ will satisfy the following first-variation identities
\begin{align}\label{2.19}
	\left. \frac{\delta E_k}{\delta u} \right|_{u=\phi_c}=-\omega\left. \frac{\delta E_{k-1}}{\delta u} \right|_{u=\phi_c}=\cdots=(-\omega)^k\left. \frac{\delta E_0}{\delta u} \right|_{u=\phi_c},
\end{align}
where $\omega = \frac{2}{c}$. Now multiply (\ref{2.19}) with $\frac{\mathrm{d}\phi_c}{\mathrm{d}c}$, one has
\begin{align}
	\frac{\mathrm{d}E_k(\phi_c)}{\mathrm{d}c}=-\omega \frac{\mathrm{d}E_{k-1}(\phi_c)}{\mathrm{d}c}=\cdots=(-\omega)^k\frac{\mathrm{d}E_0(\phi_c)}{\mathrm{d}c},
\end{align}
and therefore
\begin{align}
	E_k(\phi_c)=(-1)^k\int_{0}^{c}y^k\frac{\mathrm{d}E_0(\phi_y)}{\mathrm{d}y}\mathrm{d}y, \quad k=1,2, \cdots, N.
\end{align}
Furthermore, we could easily find that
\begin{align*}
	E_0(\phi_c)=\int_\mathbb{R}\left(\phi_c^2+\phi_{cx}^2\right)dx=\int_\mathbb{R}\phi_c\left(\phi_c-\phi_{cxx}\right)dx,
\end{align*}
and one can obtain that
\begin{align}
	\frac{dE_0(\phi_c)}{dc}=2\sqrt{\frac{c}{c-\gamma}}>0.
\end{align}
Thus, one has
\begin{align}
	E_k(\phi_{c})=(-1)^k 2\int_{0}^{c}y^k\sqrt{\frac{y}{y-\gamma}}\mathrm{d}y, \quad k=1,2, \cdots, N.
\end{align}
\subsection{Variational characterization of the $N$-solitons}
In this subsection, we would demonstrate that the mCH $N$-solitons satisfy (\ref{1.20_N}) if one prescribes the Lagrange multipliers $\mu_k, k=0,1,\cdots, N-1$ appropriately. This provides a variational characterization of $U^{(N)}$. The main result in this subsection is as follows.
\begin{prop}
	The profiles of the mCH $N$-solitons $U^{(N)}$ satisfy the Euler-Lagrange equation \eqref{1.20_N} if the Lagrange multipliers $\{\mu_{m}\}_{m=0}^{N-1}$ are chosen such that the wave parameters $\{-\omega_j\}_{j=1}^N$ (where $\omega_j = 2/c_j$) are the roots of the following polynomial:
	\begin{align}
		P(x) = \prod_{j=1}^{N}(x + \omega_j) = x^{N} + \sum_{k=1}^{N}\mu_{N-k}x^{N-k}, \quad x\in \mathbb{R}.
	\end{align}
	In particular, the multipliers $\mu_m$ are given by Vieta's formulas:
	\begin{align}
		\mu_{N-k} = \sum_{1\leq i_{1} < \cdots < i_{k} \leq N} \biggl( \prod_{j=1}^{k}\omega_{i_{j}} \biggr), \quad k=1,\ldots,N.\label{lag_refined}
	\end{align}
\end{prop}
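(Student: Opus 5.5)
Following the Maddocks--Sachs approach for KdV, I would prove the Euler--Lagrange identity \eqref{1.20_N} by rewriting its left side as a polynomial in the recursion operator applied to a single function, and then showing that this function is annihilated by the polynomial.

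By \eqref{hamilton}, $\frac{\delta E_m}{\delta u}=\widehat{\mathcal{R}}^m(u)\frac{\delta E_0}{\delta u}$ with $\frac{\delta E_0}{\delta u}=2m$. Hence the left side of \eqref{1.20_N} is
\begin{align*}
\Big(\widehat{\mathcal{R}}^N(u)+\sum_{m=0}^{N-1}\mu_m\widehat{\mathcal{R}}^m(u)\Big)\frac{\delta E_0}{\delta u}=P\big(\widehat{\mathcal{R}}(u)\big)\frac{\delta E_0}{\delta u}.
\end{align*}
With the multipliers chosen as in \eqref{lag_refined}, this factors as $\prod_{j=1}^N(\widehat{\mathcal{R}}+\omega_j)\frac{\delta E_0}{\delta u}$ evaluated at $u=U^{(N)}$. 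It therefore suffices to show that $P(\widehat{\mathcal{R}}(U^{(N)}))$ annihilates $2(1-\partial_x^2)U^{(N)}$ for all $t$. The factors commute, since they are all polynomials in the single operator $\widehat{\mathcal{R}}(U^{(N)})$, so the order of the product does not matter.

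The core step is to show that the $N$-soliton profile $U^{(N)}(t,\cdot)$ is a stationary point of a flow in the mCH hierarchy. Concretely, I would show that $U^{(N)}$ is a common solution of the commuting flows $u_{\tau_k}=\mathcal{J}\frac{\delta E_k}{\delta u}$, whose Hamiltonians are in involution by the Lenard--Magri scheme. On the $N$-soliton manifold, each such flow acts only through the phases: by the inverse scattering description of Subsection 2.1, the $k$-th flow advances the $j$-th phase at speed $\propto(-\omega_j)^k$. This mirrors the single-soliton identity \eqref{2.19}, where $\phi_c$ satisfies $(\widehat{\mathcal{R}}+\omega)\frac{\delta E_0}{\delta u}=0$, so that $\mathcal{J}\frac{\delta E_k}{\delta u}\big|_{\phi_c}=(-\omega)^k\mathcal{J}\frac{\delta E_0}{\delta u}\big|_{\phi_c}$ and each flow is a translation.

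The $N$-dimensional tangent space of the manifold is spanned by the phase derivatives $\partial_{x_j}U^{(N)}$. The vectors $\mathcal{J}\frac{\delta E_k}{\delta u}$ for $k=0,\dots,N$ are $N+1$ vectors in this $N$-dimensional space. Their linear relation is exactly the one whose coefficient vector, acting on the speeds $(-\omega_j)^k$, vanishes for every $j$, namely $\sum_k \mu_k(-\omega_j)^k=P(-\omega_j)=0$ with $\mu_N=1$. Therefore $\mathcal{J}\big(\frac{\delta E_N}{\delta u}+\sum_{m}\mu_m\frac{\delta E_m}{\delta u}\big)=0$ at $U^{(N)}$. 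Because $\mathcal{J}=\partial_x(1-\partial_x^2)^{-1}$ is injective on decaying functions, and the bracket decays at infinity by the exponential localization of $U^{(N)}$, this gives \eqref{1.20_N}.

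The main obstacle is rigorously identifying how the $k$-th hierarchy flow acts on the soliton phases, i.e. that the $j$-th phase speed is proportional to $(-\omega_j)^k$. I would establish this through the squared-eigenfunction expansion. The gradients $\frac{\delta E_k}{\delta u}$ can be expressed via the trace formula in terms of $\log a(k)$. At a reflectionless potential the continuous-spectrum contribution vanishes, so only the discrete points $k_j$ contribute. On these, $\widehat{\mathcal{R}}$ acts diagonally on the squared eigenfunctions with eigenvalue $-\omega_j=-\lambda_j^2$, using $c_j=2/\lambda_j^2$. A backup route is the asymptotic decoupling \eqref{asymptotic_N} combined with \eqref{2.19} for each separated soliton. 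The bracket is then exponentially small as $t\to\infty$, and since it is itself conserved along the flow, it must vanish identically. That invariance would still need a short argument of its own.
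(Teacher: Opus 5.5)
Your argument is sound, but your main route differs from the paper's. The paper's proof is essentially your backup route. It lets $t\to\infty$, uses the decoupling \eqref{asymptotic_N} to demand \eqref{1.20_N} at each $\phi_{c_j}$ separately, inserts \eqref{2.19} to get $P(-\omega_j)\frac{\delta E_0}{\delta u}(\phi_{c_j})=0$, and reads off Vieta's formulas. As written, this only identifies which multipliers are compatible with \eqref{1.20_N} in the decoupled limit. It skips exactly the step you flag: why an identity that holds asymptotically holds exactly at finite $t$.

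That step is not a one-liner. Since $\{\mathcal{S}_N,H\}=0$, the residual $B(t)=\mathcal{S}_N'(U^{(N)}(t))$ satisfies $\partial_t B=H''(U^{(N)})\mathcal{J}B$. Hence $\langle B(t),v(t)\rangle$ is constant for every solution $v$ of the linearized mCH flow. Exponential smallness of $B(t)$ forces $B(0)=0$ only if you control the growth of $v(t)$, and that growth is secular along the directions $\partial_{c_j}U^{(N)}$.

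Your main route transplants Lax's KdV argument to the negative mCH hierarchy, and it proves the statement as posed: sufficiency, at every $t$. The $N+1$ fields $\mathcal{J}E_k'(U^{(N)})=\sum_j\kappa_j(-\omega_j)^k\partial_{x_j}U^{(N)}$ satisfy, by the Vandermonde structure, a unique monic relation, and its coefficients are those of $P$. Injectivity of $\mathcal{J}$ on decaying functions then finishes, since the gradients of the local densities $E_k$ do decay at $U^{(N)}$.

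The price is IST input the paper never develops. You need the fixed-speed $N$-soliton manifold to be invariant under each flow $\mathcal{J}E_k'$, with phase speeds $\kappa_j(-\omega_j)^k$ and $\kappa_j$ independent of $k$. The trace-formula version you sketch last is the most economical way to supply this, and it makes the tangent-space detour unnecessary. At a reflectionless potential the continuous part of the trace formula has zero gradient, because it is quadratic in the reflection coefficient. So $E_0'(U^{(N)})=2m$ is a combination $\sum_j\alpha_j\Psi_j$ of the discrete squared eigenfunctions. These obey $\widehat{\mathcal{R}}(U^{(N)})\Psi_j=-\omega_j\Psi_j$ by the analogue of \eqref{3.12} at each $k_j$. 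Your first-paragraph reduction then gives $P(\widehat{\mathcal{R}}(U^{(N)}))\,2m=\sum_j\alpha_jP(-\omega_j)\Psi_j=0$ directly.
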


\begin{proof}
	Considering the Euler-Lagrange equation and the asymptotic property of $N$-solitons, as $t \to \infty$, the $N$ solitons spatially separate and decouple. Thus, the equation must hold for each individual soliton $\phi_{c_j}$ ($j=1,\dots,N$) independently:
	\begin{align}
		\frac{\delta E_{N}}{\delta u}(\phi_{c_j}) + \sum_{m=0}^{N-1} \mu_m \frac{\delta E_{m}}{\delta u}(\phi_{c_j}) = 0.
	\end{align}
	Recall the recurrence relation for a single soliton with parameter $\omega_j$:
	\begin{align}
		\frac{\delta E_{k}}{\delta u}(\phi_{c_j}) = (-\omega_j)^k \frac{\delta E_0}{\delta u}(\phi_{c_j}), \quad k = 0, 1, \dots, N.
	\end{align}
	Substituting this into the decoupling equation yields:
	\begin{align}
		\left[ (-\omega_j)^N + \sum_{m=0}^{N-1} \mu_m (-\omega_j)^m \right] \frac{\delta E_0}{\delta u}(\phi_{c_j}) = 0.
	\end{align}
	At this stage, we emphasize that $\frac{\delta E_0}{\delta u}(\phi_{c_j}) = 2m_{\phi_{c_j}}$. Since the momentum density $m_{\phi_{c_j}}$ of a non-trivial soliton is not identically zero, the non-degeneracy of the variational derivative $\frac{\delta E_0}{\delta u}$ allows us to conclude that the term in the brackets must vanish for each $j$:
	\begin{align}
		(-\omega_j)^N + \sum_{m=0}^{N-1} \mu_m (-\omega_j)^m = 0, \quad j=1,\dots,N.
	\end{align}
	This implies that $\{-\omega_j\}_{j=1}^N$ are precisely the $N$ roots of the polynomial $P(x) = x^N + \mu_{N-1}x^{N-1} + \dots + \mu_0$. The application of Vieta's formulas then leads directly to \eqref{lag_refined}.
\end{proof}

We consider now the following mCH $N$-solitons $U^{(N)}(t,x;\mathbf{c},\mathbf{x})$ variational principle. The variational expression corresponds to the functional derivative of (\ref{1.20_N}) evaluated at $u = U^{(N)}$. Generally, the $N$-soliton solution $U^{(N)}(t,x)$ does not minimize $\mathcal{S}_N$ globally but constitutes a constrained, non-isolated critical point for the optimization problem
\[
\min E_{N}(u(t)) \quad \text{subject to} \quad E_j(u(t)) = E_j(U^{(N)}(t)), \quad j =0,1, \cdots, N-1.
\]

Consider the self-adjoint operator $\mathcal{L}_N(t)$ defined by (\ref{VL_N}), whose second variation properties are essential. Denote by $n(\mathcal{L}_N(t))$ the count of its negative eigenvalues. These quantities exhibit potential time-dependence. The $N \times N$ Hessian matrix is defined as
\begin{align}\label{4.1}
	D_N(t) := \left\{ \frac{\partial^2 \mathcal{S}_N(U^{(N)}(t))}{\partial \mu_{i-1} \partial \mu_{j-1}} \right\},
\end{align}
with $p(D(t))$ representing its positive eigenvalue count. Base upon the facts above and approach in \cite{MS}, the proof of Theorem\ref{th1.1} reduces to calculate the exact value of $n(\mathcal{L}_N)$ and $p(D_N)$. Concerning $p(D_N)$, one has the following results.

\begin{lemma}\label{lem2.1}
	For any wave speed vector $\mathbf{c} = (c_1, c_2, \dots, c_N)$ satisfying $\gamma < c_1 < \dots < c_N \leq 2\gamma$, the Hessian matrix $D_N$ defined in \eqref{Delta} satisfies the spectral condition
	\begin{align}
		p(D_N) = \left[ \frac{N+1}{2}\right],
	\end{align}
	where $p(D_N)$ denotes the number of positive eigenvalues of $D_N$.
\end{lemma}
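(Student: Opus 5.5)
We follow the Maddocks--Sachs strategy and reduce the count $p(D_N)$ to the inertia of a Hankel-type matrix built from the single-soliton energies. The computation needs three ingredients: the variational characterization of $U^{(N)}$ from the preceding proposition, the asymptotic decoupling \eqref{asymptotic_N}, and the explicit formula for $E_k(\phi_c)$.

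\textbf{Step 1: reduce the entries of $D_N$ to soliton sums.} Since the $E_k$ are conserved and $U^{(N)}$ decouples as $t\to\infty$, we have
\[
E_k(U^{(N)}) = \sum_{j=1}^N E_k(\phi_{c_j}).
\]
By the envelope theorem, $\partial \mathcal S_N/\partial\mu_{i-1} = (-1)^N E_{i-1}(U^{(N)})$. Hence
\[
(D_N)_{ij} = (-1)^N \sum_{l=1}^N \frac{\partial E_{i-1}(\phi_{c_l})}{\partial \mu_{j-1}} .
\]
We use the chain rule $\partial/\partial\mu_{j-1} = \sum_l (\partial \omega_l/\partial\mu_{j-1})\,\partial/\partial\omega_l$. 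From \eqref{2.19}, $dE_k(\phi_c)/d\omega = (-\omega)^k\, dE_0/d\omega$. The Jacobian $(\partial\omega_l/\partial\mu_{j-1})$ is the inverse of the Jacobian of the Vieta map \eqref{lag_refined}. Differentiating $P(-\omega_l)=0$ gives
\[
\frac{\partial \omega_l}{\partial\mu_{j-1}} = \frac{(-\omega_l)^{j-1}}{P'(-\omega_l)} .
\]

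\textbf{Step 2: factor $D_N$ as a congruence.} Substituting, we get
\[
D_N = (-1)^N\, V^{T} \Lambda V, \qquad V_{lj} = (-\omega_l)^{j-1}, \qquad \Lambda = \operatorname{diag}\!\left(\frac{E_0'(\omega_l)}{P'(-\omega_l)}\right).
\]
The Vandermonde matrix $V$ is invertible because the speeds, and hence the $\omega_l$, are distinct. Sylvester's law of inertia then says that $p(D_N)$ equals the number of $l$ with $(-1)^N E_0'(\omega_l)/P'(-\omega_l) > 0$.

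\textbf{Step 3: count the signs.} First, $dE_0/dc = 2\sqrt{c/(c-\gamma)} > 0$ and $\omega = 2/c$ is decreasing in $c$, so $dE_0/d\omega < 0$ for every $l$. Next, order $\omega_1 > \cdots > \omega_N$, which corresponds to $c_1 < \cdots < c_N$. Then
\[
P'(-\omega_l) = \prod_{j\neq l} (\omega_j - \omega_l)
\]
contains exactly $l-1$ negative factors, namely those with $j<l$, since $\omega_j > \omega_l$ for $j<l$. Its sign is therefore $(-1)^{l-1}$, and the sign of the $l$-th diagonal entry of $(-1)^N\Lambda$ is
\[
(-1)^N \cdot (-1) \cdot (-1)^{l-1} = (-1)^{N+l}.
\]
Counting $l\in\{1,\dots,N\}$ with $N+l$ even gives exactly $[\frac{N+1}{2}]$ when $N$ is odd and $N/2$ when $N$ is even.

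\textbf{Main obstacle: sign conventions.} The Step 3 count is $N/2$ for even $N$, which is not $[\frac{N+1}{2}]$. The claimed formula is therefore very sensitive to three choices: the overall sign of $\mathcal S_N$, the meaning of differentiation with respect to $\mu$ (holding $E$ fixed versus the Legendre-dual convention), and whether $D_N$ is defined with $\mathcal S_N$ or with its Legendre transform. This is where I expect the real difficulty. I would first re-derive the Maddocks--Sachs definition, in which $D_N$ is the Hessian of the value function $d(\mu)=\mathcal S_N(U^{(N)}(\mu))$. That Hessian equals $-(\partial E/\partial\mu)$ up to the $(-1)^N$ factor, which flips every sign in Step 3. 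I would then redo the sign count under that convention and check it against the case $N=1$, where the known single-soliton result must give $p(D_1)=1$.

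I also note a tension with the paper's own claim, and I do not resolve it here. Under any fixed convention the diagonal signs alternate in $l$, so the count should be $N/2$ or $N-[\frac N2]$ depending on the overall sign. This equals $[\frac{N+1}{2}]$ exactly when the overall sign is chosen so that the $l=N$ term (the fastest soliton) is positive. Fixing that normalization is the crux of the proof.
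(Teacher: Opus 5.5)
Your Steps 1--3 follow the paper's proof. You use the same envelope identity $\partial\mathcal{S}_N/\partial\mu_{j-1}=(-1)^N E_{j-1}(U^{(N)})$, the same decoupling $E_k(U^{(N)})=\sum_l E_k(\phi_{c_l})$, and the same chain rule through the Vieta map. You then reduce, via Sylvester's law, to a diagonal matrix whose $l$-th entry has the sign of $(-1)^N E_0'(\omega_l)\prod_{k\neq l}(\omega_k-\omega_l)$. Your formula $\partial\omega_l/\partial\mu_{j-1}=(-\omega_l)^{j-1}/P'(-\omega_l)$ and the factorization $D_N=(-1)^N V^{T}\Lambda V$ actually derive that congruence, and the symmetry of $D_N$; the paper only asserts both.

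Step 3, however, contains a sign slip. For $j<l$ you have $\omega_j>\omega_l$, so $\omega_j-\omega_l>0$. The negative factors of $P'(-\omega_l)=\prod_{j\neq l}(\omega_j-\omega_l)$ are therefore those with $j>l$. Hence $\operatorname{sgn}P'(-\omega_l)=(-1)^{N-l}$, and the $l$-th diagonal entry of $(-1)^N\Lambda$ has sign $(-1)^N\cdot(-1)\cdot(-1)^{N-l}=(-1)^{l+1}$. It is positive exactly for odd $l$, which is the paper's count.

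Your ``main obstacle'' comes from a misreading. Here $[\cdot]$ is the integer part, so $[\frac{N+1}{2}]=\frac{N}{2}$ when $N$ is even. Your mis-signed pattern $(-1)^{N+l}$ is just the correct pattern composed with the reversal $l\mapsto N+1-l$. It therefore also yields $\lceil N/2\rceil=[\frac{N+1}{2}]$ positive entries for every $N$. The proof is complete at the end of Step 3, and the last two paragraphs should be dropped.

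In particular, do not adopt the convention change you propose. The envelope computation in Step 1 already is the Hessian of the value function $d(\mu)=\mathcal{S}_N(U^{(N)}(\mu))$, so there is no extra sign. Flipping all signs would give $[\frac{N}{2}]$ positive eigenvalues, hence $p(D_1)=0$. That contradicts the one-soliton case and the count $n(\mathcal{L}_N)=[\frac{N+1}{2}]$ from Proposition~\ref{prop3.8}.

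Your criterion that the fastest soliton's entry must be positive is also not right. The overall sign matters only for odd $N$, where $l=1$ and $l=N$ have the same parity. For even $N$ the $l=N$ entry is negative, yet the count is still $N/2$.
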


\begin{proof}
	For fixed $t$, we omit the temporal dependence for brevity. Since the multipliers $\mu_m$ are defined via the symmetric polynomials of the roots $-\omega_k$ (where $\omega_k = 2/c_k$), we apply the chain rule with respect to $\omega_k$. The entries of the Hessian matrix $D_N \in \mathbb{R}^{N \times N}$ are given by
	\begin{align}
		(D_N)_{ij} = \frac{\partial^2 \mathcal{S}_N}{\partial\mu_{i-1} \partial \mu_{j-1}} = \sum_{k=1}^{N} \frac{\partial \omega_k}{\partial \mu_{i-1}} \frac{\partial}{\partial \omega_k} \left( \frac{\partial \mathcal{S}_N}{\partial \mu_{j-1}} \right), \quad 1 \leq i, j \leq N.
	\end{align}
	Recalling the critical point condition $\mathcal{S}'_N(U^{(N)}) = 0$ and the globally corrected Lyapunov functional $\mathcal{S}_N(u) = (-1)^N \big( E_N(u) + \sum \mu_m E_m(u) \big)$, the first variation with respect to the multipliers simplifies to
	\begin{align}
		\frac{\partial \mathcal{S}_N}{\partial \mu_{j-1}} = (-1)^N E_{j-1}(U^{(N)}).
	\end{align}
	Consequently, the matrix $D_N$ can be decomposed as the product of two $N \times N$ matrices:
	\begin{align}
		D_N = AB, \quad A = \left( \frac{\partial \omega_j}{\partial \mu_{i-1}} \right), \quad B = \left( (-1)^N \frac{\partial E_{j-1}}{\partial \omega_i} \right).
	\end{align}
	In the asymptotic limit where the multi-solitons $U^{(N)}$ decouple into $N$ independent single solitons, we have $E_{j-1}(U^{(N)}) = \sum_{k=1}^{N} E_{j-1}(\phi_{c_k})$. Utilizing the recursive integral representation $\frac{\partial E_{j-1}(\phi_{c_i})}{\partial c_i} = (-\omega_i)^{j-1} \frac{\mathrm{d}E_0(\phi_{c_i})}{\mathrm{d}c_i}$ and the chain rule $\frac{\mathrm{d}c_i}{\mathrm{d}\omega_i} = -\frac{2}{\omega_i^2}$, we explicitly compute the entries of $B$:
	\begin{align}
		B_{ji} = (-1)^N (-\omega_i)^{j-1} \frac{\mathrm{d}E_0(\phi_{c_i})}{\mathrm{d}\omega_i} = (-1)^{N+1} (-\omega_i)^{j-1} \frac{4}{\omega_i^2} \sqrt{\frac{c_i}{c_i-\gamma}}.
	\end{align}
	It can be verified that the matrix $B$ is invertible, as it is the product of a diagonal matrix with strictly negative entries and a non-singular Vandermonde matrix generated by $(-\omega_i)$. According to Sylvester's law of inertia, since $D_N$ is a symmetric Hessian matrix, its number of positive eigenvalues $p(D_N)$ is invariant under congruence.
	
	Specifically, it suffices to examine the diagonal structure of the related product. A direct calculation involving the properties of symmetric polynomials (Vieta's formulas) shows that $D_N$ is congruent to a diagonal matrix $\Lambda$ whose $(j,j)$ entry is given by
	\begin{align}
		\Lambda_{jj} = (-1)^N \frac{\partial E_0(\phi_{c_j})}{\partial \omega_j} \prod_{k \neq j} (\omega_k - \omega_j) = (-1)^{N+1} \frac{4}{\omega_j^2} \sqrt{\frac{c_j}{c_j-\gamma}} \prod_{k \neq j} (\omega_k - \omega_j).
	\end{align}
Counting the positive entries in this strictly alternating sequence yields exactly $\left[ \frac{N+1}{2}\right]$ positive eigenvalues.
\end{proof}

\subsection{Recursion operators around the smooth solitons}
 Let us recall that the soliton $\phi_c(x-ct)$  is a solution of the mCH equation. For simplicity, we denote $\phi_{c}$ by $\phi$. Then by (\ref{hamilton}) we have
 \begin{align}
 	E'_k(\phi)=\widehat{R}(\phi)E'_{k-1}(\phi),
 \end{align}
where $\widehat{\mathcal{R}}[\phi]$  is the associated operator
\begin{align}
	\widehat{\mathcal{R}}[\phi]=-\frac{2}{\gamma}\left(1-\partial_x^2\right)\left(1-\frac{m_{\phi}}{M_{\phi}}\partial_x^{-1}\frac{m_{\phi}}{M_{\phi}}\partial_x\right).
\end{align}

To analyze the second variation of the actions, we linearize the equation (\ref{hamilton}) to let $u=\phi+\epsilon z$, and obtain a relation between linearized Hamiltonian $E''_{k}(\phi)$ and $E''_{k-1}(\phi)$ for all $k\geq 1$. One has the following results.

\begin{prop}\label{prop2.2}
	Suppose that $\phi_c$ is a soliton of the mCH equation \eqref{mch} with speed $c \in (\gamma, 2\gamma]$. If $z \in H^{k+1}(\mathbb{R})$, then there holds
	\begin{equation}\label{2.72_style}
		E_k''(\phi_c)z = \widehat{\mathcal{R}}(\phi_c)E_{k-1}''(\phi_c)z + (-\omega)^{k-1} \left( E_1''(\phi_c)z - \widehat{\mathcal{R}}(\phi_c)E_0''(\phi_c)z \right),
	\end{equation}
	and the following iteration operator identity
	\begin{equation}\label{2.73_style}
		\left( E_k''(\phi_c) + \omega E_{k-1}''(\phi_c) \right)z = \widehat{\mathcal{R}}(\phi_c) \left( E_{k-1}''(\phi_c) + \omega E_{k-2}''(\phi_c) \right)z.
	\end{equation}
\end{prop}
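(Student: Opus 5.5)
We obtain both identities by linearizing the Lenard--Magri recursion \eqref{hamilton} at $u=\phi_c$ and then feeding in the first-variation identities \eqref{2.19}.

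\textbf{Step 1: linearize the recursion.} Start from
\begin{align*}
E_k'(u)=\widehat{\mathcal{R}}(u)E_{k-1}'(u),
\end{align*}
valid for $u$ in a neighbourhood of $\phi_c$ in $H^{k+1}(\mathbb{R})$. By Theorem \ref{thm:domain_conservation_laws} and Lemma \ref{lem:well_posedness_domain}, both sides are $C^1$ there, so we may set $u=\phi_c+\epsilon z$ and differentiate at $\epsilon=0$. This gives
\begin{align*}
E_k''(\phi_c)z=\widehat{\mathcal{R}}(\phi_c)E_{k-1}''(\phi_c)z+\big(\widehat{\mathcal{R}}'(\phi_c)[z]\big)E_{k-1}'(\phi_c),
\end{align*}
where $\widehat{\mathcal{R}}'(\phi_c)[z]$ is the Gateaux derivative of the operator. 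Explicitly, from \eqref{inverse}, with $f=m/M$ and $\delta f=\frac{\gamma}{2}M^{-3}(1-\partial_x^2)z$ evaluated at $\phi_c$,
\begin{align*}
\widehat{\mathcal{R}}'(\phi_c)[z]=\frac{2}{\gamma}(1-\partial_x^2)\big(\delta f\,\partial_x^{-1}f\partial_x+f\partial_x^{-1}\delta f\,\partial_x\big).
\end{align*}

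\textbf{Step 2: use the soliton structure.} By \eqref{2.19},
\begin{align*}
E_{k-1}'(\phi_c)=(-\omega)^{k-1}E_0'(\phi_c).
\end{align*}
Since $\widehat{\mathcal{R}}'(\phi_c)[z]$ is linear in the function it acts on,
\begin{align*}
\big(\widehat{\mathcal{R}}'(\phi_c)[z]\big)E_{k-1}'(\phi_c)=(-\omega)^{k-1}\big(\widehat{\mathcal{R}}'(\phi_c)[z]\big)E_0'(\phi_c).
\end{align*}
The case $k=1$ of Step 1 identifies this last term:
\begin{align*}
\big(\widehat{\mathcal{R}}'(\phi_c)[z]\big)E_0'(\phi_c)=E_1''(\phi_c)z-\widehat{\mathcal{R}}(\phi_c)E_0''(\phi_c)z.
\end{align*}
Substituting yields \eqref{2.72_style}.

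\textbf{Step 3: the iteration identity.} Write \eqref{2.72_style} at level $k$ and at level $k-1$. Multiply the level-$(k-1)$ version by $\omega$ and add it to the level-$k$ version. The inhomogeneous terms carry the coefficients $(-\omega)^{k-1}$ and $\omega(-\omega)^{k-2}=-(-\omega)^{k-1}$, so they cancel. What remains is \eqref{2.73_style}. For $k=1$, the term $E_{-1}''$ is interpreted as $0$, and \eqref{2.73_style} is then simply the definition of the remainder term.

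\textbf{Main obstacle: justification, not algebra.} The delicate points are the following.
\begin{enumerate}[(i)]
\item The nonlocal operator $\partial_x^{-1}$ must be well defined on the functions it meets. Its argument $f\partial_x E_0'(\phi_c)$ decays exponentially, since $\phi_c$ does. Fix the normalization $\partial_x^{-1}=\int_{-\infty}^x$ and check that the resulting terms lie in $H^{-k-1}$ for $z\in H^{k+1}$.
\item The map $u\mapsto\widehat{\mathcal{R}}(u)$ must be differentiable in a suitable operator topology. This uses $M\ge\sqrt{\gamma/2}$, so that $M^{-j}$ is a smooth function of $m$.
\item The second derivative $E_k''(\phi_c)$ must make sense as a map $H^{k+1}\to H^{-k-1}$.
\end{enumerate}
I would handle all three by first proving the identities for $z\in\mathcal{S}(\mathbb{R})$, where every manipulation is classical, and then extending by density. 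Both sides are bounded linear maps $H^{k+1}\to H^{-k-1}$: by Lemma \ref{lem:variational_derivative_order} the operators have order $2k+2$, with smooth, exponentially localized coefficient corrections.
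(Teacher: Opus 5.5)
Your proof is correct and follows the paper's argument step for step: linearize $E_k'(u)=\widehat{\mathcal{R}}(u)E_{k-1}'(u)$ at $\phi_c$, replace $E_{k-1}'(\phi_c)$ by $(-\omega)^{k-1}E_0'(\phi_c)$, read off $\bigl(\widehat{\mathcal{R}}'(\phi_c)[z]\bigr)E_0'(\phi_c)$ from the case $k=1$, and add $\omega$ times the level-$(k-1)$ identity so the inhomogeneous terms cancel (your functional-analytic caveats go beyond the paper, which argues purely formally). One correction: drop the aside that \eqref{2.73_style} holds at $k=1$ with $E_{-1}''=0$, because it would assert $E_1''(\phi_c)z-\widehat{\mathcal{R}}(\phi_c)E_0''(\phi_c)z=-2\omega(1-\partial_x^2)z$, and that is false: for a fixed bump $z$ translated far from the soliton the left side tends to zero, since in your formula $\partial_x^{-1}\bigl(\frac{m_\phi}{M_\phi}\partial_xE_0'(\phi_c)\bigr)=2\bigl(M_\phi-\sqrt{\gamma/2}\bigr)$ and $\partial_xE_0'(\phi_c)=2\partial_x m_\phi$ are exponentially small on its support, while the right side does not; so the iteration identity genuinely needs $k\ge 2$, which is also what the paper's derivation from \eqref{2.72_style} at levels $k$ and $k-1\ge 1$ uses.
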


\begin{proof}
	Let $u = \phi_c + \varepsilon z$. By the recursion relation $E_k'(u) = \widehat{\mathcal{R}}[u]E_{k-1}'(u)$ and the definition of the G\^ateaux derivative, one has
	\begin{equation}\label{2.74_style}
		E_k''(\phi_c)z = \widehat{\mathcal{R}}(\phi_c)E_{k-1}''(\phi_c)z + \left( \widehat{\mathcal{R}}'(\phi_c)z \right) \left( E_{k-1}'(\phi_c) \right),
	\end{equation}
	where $\widehat{\mathcal{R}}'(\phi_c)z = \lim_{\varepsilon \to 0} \frac{\widehat{\mathcal{R}}(\phi_c + \varepsilon z) - \widehat{\mathcal{R}}(\phi_c)}{\varepsilon}$ is the linearization of the recursion operator.
	
	Notice that by the variational principle for the $1$-soliton $\phi_c$, we have the first-order identity $E_{k-1}'(\phi_c) = (-\omega)^{k-1} E_0'(\phi_c)$. Substituting this into \eqref{2.74_style} yields
	\begin{equation}\label{eq:Ek_intermediate}
		E_k''(\phi_c)z = \widehat{\mathcal{R}}(\phi_c)E_{k-1}''(\phi_c)z + (-\omega)^{k-1} \left( \widehat{\mathcal{R}}'(\phi_c)z \right) \left( E_0'(\phi_c) \right).
	\end{equation}
	Particularly, for $k=1$, the relation \eqref{2.74_style} becomes
	\begin{equation}
		E_1''(\phi_c)z = \widehat{\mathcal{R}}(\phi_c)E_0''(\phi_c)z + \left( \widehat{\mathcal{R}}'(\phi_c)z \right) \left( E_0'(\phi_c) \right),
	\end{equation}
	which implies that
	\begin{equation}\label{2.75_style}
		\left( \widehat{\mathcal{R}}'(\phi_c)z \right) \left( E_0'(\phi_c) \right) = E_1''(\phi_c)z - \widehat{\mathcal{R}}(\phi_c)E_0''(\phi_c)z.
	\end{equation}
	Combining \eqref{eq:Ek_intermediate} and \eqref{2.75_style}, the identity \eqref{2.72_style} is verified.
	
	To prove \eqref{2.73_style}, we rewrite \eqref{2.72_style} for $E_k''$ and $E_{k-1}''$ respectively:
	\begin{align*}
		E_k'' z &= \widehat{\mathcal{R}} E_{k-1}'' z + (-\omega)^{k-1} \left( \widehat{\mathcal{R}}' z \right) E_0', \\
		\omega E_{k-1}'' z &= \omega \widehat{\mathcal{R}} E_{k-2}'' z + \omega (-\omega)^{k-2} \left( \widehat{\mathcal{R}}' z \right) E_0'.
	\end{align*}
	Summing the two equations, the terms involving $\widehat{\mathcal{R}}' z$ cancel out since $\omega(-\omega)^{k-2} = -(-\omega)^{k-1}$. This leads directly to
	\begin{equation*}
		\left( E_k'' + \omega E_{k-1}'' \right)z = \widehat{\mathcal{R}} \left( E_{k-1}'' + \omega E_{k-2}'' \right)z,
	\end{equation*}
	which completes the proof.
\end{proof}

\section{Spectral analysis}\label{sec.3}
Let $U^{(N)} = U^{(N)}_{\mathbf{c,x}}$ be the $N$-soliton solution to the mCH equation \eqref{mch}, characterized by the wave speed vector $\mathbf{c} = (c_1, \dots, c_N)$ and the initial shift vector $\mathbf{x} = (x_1, \dots, x_N)$. This section is devoted to a comprehensive spectral analysis of the linearized operator $\mathcal{L}_N = \mathcal{S}_N''(U^{(N)})$, where the augmented Lyapunov functional $\mathcal{S}_N$ is constructed through the recursion operator $\widehat{\mathcal{R}}(\phi)$ and the bi-Hamiltonian hierarchy discussed in Section \ref{sec.2}.

To determine the spectral properties of $\mathcal{L}_N$, we adopt the iso-inertial framework proposed by Maddocks and Sachs \cite{MS}.
\begin{definition}
	The inertia $in(L)$ of a self-adjoint operator $L$ is defined as the pair $(n, z)$ of non-negative integers, where $n$ represents the dimension of the negative subspace of $L$ (counted with geometric multiplicities) and $z$ denotes the dimension of its null space.
\end{definition}

As $U^{(N)}$ constitutes a critical point of the functional $\mathcal{S}_N$ and fits within the general framework of Hamiltonian systems with symmetries \cite{GSS, GSS2, MS}, the inertia $in(\mathcal{L}_N)$ is an invariant of the mCH flow. Consequently, we can evaluate the inertia in the asymptotic limit as $t \to \infty$. In this limit, the $N$-soliton solution $U^{(N)}$ decomposes into a superposition of $N$ well-separated single solitons $\phi_{c_j}$. Thus, the spectrum $\sigma(\mathcal{L}_N)$ of the linearized operator converges to the union of the spectra of the individual operators $L_{N,j}$, namely,
\begin{align}
	\sigma(\mathcal{L}_N) \longrightarrow \bigcup_{j=1}^{N} \sigma(L_{N,j}), \quad \text{as } t \to \infty,
\end{align}
where $L_{N,j} := \mathcal{S}_N''(\phi_{c_j})$ is the second variation of the action $\mathcal{S}_N$ evaluated at the $j$-th soliton component. Our objective is to prove that $in(\mathcal{L}_N) = (\left[ \frac{N+1}{2}\right], N)$.

By virtue of the Euler-Lagrange equation \eqref{lagrange_N} and the symmetric properties of the Lagrange multipliers $\mu_m$, the operator $L_{N,j}$ can be expressed as a linear combination of the second variations of the conservation laws $\{E_k\}_{k=0}^N$, which is
\begin{align}
	L_{N,j} = (-1)^N \left( E_N''(\phi_{c_j}) + \sum_{m=0}^{N-1} \mu_m E_m''(\phi_{c_j}) \right).
\end{align}
Utilizing the iterative operator identity established in Proposition \ref{prop2.2}, the operator $L_{N,j}$ admits a remarkable factorization in terms of the recursion operator $\widehat{\mathcal{R}}(\phi_{c_j})$:
\begin{align}\label{eq:factorization_LNj}
	L_{N,j} = (-1)^N \left( \prod_{k \neq j}^{N} (\widehat{\mathcal{R}}(\phi_{c_j}) + \omega_k) \right) \left( E_1''(\phi_{c_j}) + \omega_j E_0''(\phi_{c_j}) \right),
\end{align}
where $\omega_k = 2/c_k$. To proceed, we first analyze the fundamental linearized operator around a single soliton
\begin{align}
	L_1 = E_1''(\phi) + \omega E_0''(\phi) = -\sqrt{2\gamma}(1-\partial_x^2)\frac{1}{M^3}(1-\partial_x^2) + \frac{4}{c}(1-\partial_x^2).
\end{align}
The determination of the spectrum for the general operator $L_{N,j}$ thus relies on analyzing the action of the recursion operator products in \eqref{eq:factorization_LNj} on the eigenspaces of $L_1$. This motivates the subsequent spectral analysis of the recursion operator $\widehat{\mathcal{R}}(\phi)$ and its adjoint $\widehat{\mathcal{R}}^*(\phi)$.
\begin{prop}\label{prop3.1}
	Let $\phi$ be the smooth solitary wave solution of the mCH equation \eqref{mch}. Then for $1 \leq n \leq N$, the following operator identities hold:
	\begin{align}
		L_n \mathcal{J} \widehat{\mathcal{R}}(\phi) &= \widehat{\mathcal{R}}(\phi) L_n \mathcal{J}, \label{3.5} \\
		\mathcal{J} L_n \widehat{\mathcal{R}}^*(\phi) &= \widehat{\mathcal{R}}^*(\phi) \mathcal{J} L_n, \label{3.6}
	\end{align}
	where $\mathcal{J} = \partial_{x}(1-\partial_{x}^{2})^{-1}$ is the skew-symmetric Hamiltonian operator, and $\widehat{\mathcal{R}}^*$ denotes the adjoint of the recursion operator $\widehat{\mathcal{R}}$.
\end{prop}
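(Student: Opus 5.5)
The plan is to reduce both identities to the case $n=1$ and then to a single structural property of the recursion operator, namely that $\widehat{\mathcal{R}}^*$ is a strong symmetry (hereditary recursion operator) for the mCH hierarchy. Throughout, $L_n := E_n''(\phi)+\omega E_{n-1}''(\phi)$ and $\phi=\phi_c$.

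\textbf{Step 1 (structural identities).} I would first record two facts. The first comes from the bi-Hamiltonian structure: since $\widehat{\mathcal{K}}=\mathcal{K}_0^{-1}\mathcal{K}_1$ and both $\mathcal{K}_0,\mathcal{K}_1$ are skew, transporting to the $u$-variable via $\widehat{\mathcal{R}}=(1-\partial_x^2)\widehat{\mathcal{K}}(1-\partial_x^2)^{-1}$ and $\mathcal{J}=\partial_x(1-\partial_x^2)^{-1}$ should give the Lenard-type intertwining relation
\begin{align*}
\mathcal{J}\widehat{\mathcal{R}}(u)=\widehat{\mathcal{R}}^*(u)\mathcal{J}.
\end{align*}
This relation expresses that $\mathcal{J}\widehat{\mathcal{R}}$ is again skew-symmetric, and I would check it directly from \eqref{inverse}. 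The second fact is the iteration identity \eqref{2.73_style} of Proposition \ref{prop2.2}, which by induction gives
\begin{align*}
L_n=\widehat{\mathcal{R}}(\phi)^{\,n-1}L_1 .
\end{align*}

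\textbf{Step 2 (the case $n=1$ via strong symmetry).} Set $\Phi(u)=E_1'(u)+\omega E_0'(u)$. By \eqref{2.19} we have $\Phi(\phi)=0$, so $K(u):=\mathcal{J}\Phi(u)$ is a vector field in the hierarchy (a combination of the first two flows) vanishing at $\phi$, and $K'(\phi)=\mathcal{J}L_1$. The hereditary property of $\widehat{\mathcal{R}}^*$, which I would derive from the compatibility of $\mathcal{K}_0$ and $\mathcal{K}_1$ via Magri's theorem, gives
\begin{align*}
\widehat{\mathcal{R}}^{*\prime}(u)[K(u)]-[K'(u),\widehat{\mathcal{R}}^*(u)]=0 .
\end{align*}
At $u=\phi$ the first term is linear in $K(\phi)=0$ and therefore drops out. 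What remains is $\mathcal{J}L_1\widehat{\mathcal{R}}^*=\widehat{\mathcal{R}}^*\mathcal{J}L_1$, which is \eqref{3.6} for $n=1$. To obtain \eqref{3.5} for $n=1$, I would take adjoints, using that $L_1$ is self-adjoint and $\mathcal{J}^*=-\mathcal{J}$:
\begin{align*}
(\mathcal{J}L_1\widehat{\mathcal{R}}^*)^*=-\widehat{\mathcal{R}}L_1\mathcal{J},\qquad (\widehat{\mathcal{R}}^*\mathcal{J}L_1)^*=-L_1\mathcal{J}\widehat{\mathcal{R}} .
\end{align*}

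\textbf{Step 3 (general $n$).} For \eqref{3.5}, Step 1 and the case $n=1$ give
\begin{align*}
L_n\mathcal{J}\widehat{\mathcal{R}}=\widehat{\mathcal{R}}^{n-1}L_1\mathcal{J}\widehat{\mathcal{R}}=\widehat{\mathcal{R}}^{n}L_1\mathcal{J}=\widehat{\mathcal{R}}L_n\mathcal{J}.
\end{align*}
For \eqref{3.6}, I would move $\mathcal{J}$ through the powers of $\widehat{\mathcal{R}}$ using $\mathcal{J}\widehat{\mathcal{R}}^{n-1}=(\widehat{\mathcal{R}}^*)^{n-1}\mathcal{J}$:
\begin{align*}
\mathcal{J}L_n\widehat{\mathcal{R}}^*=(\widehat{\mathcal{R}}^*)^{n-1}\mathcal{J}L_1\widehat{\mathcal{R}}^*=(\widehat{\mathcal{R}}^*)^{n}\mathcal{J}L_1=\widehat{\mathcal{R}}^*\mathcal{J}L_n .
\end{align*}
Alternatively, \eqref{3.6} follows from \eqref{3.5} by taking adjoints once more, since $L_n$ is self-adjoint, being a Hessian.

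\textbf{Main obstacle.} The hard part is Step 2, specifically justifying the hereditary (strong-symmetry) property of $\widehat{\mathcal{R}}^*$ in the $u$-variables, together with the intertwining relation $\mathcal{J}\widehat{\mathcal{R}}=\widehat{\mathcal{R}}^*\mathcal{J}$. These hold formally from the compatibility of $\mathcal{K}_0$ and $\mathcal{K}_1$, but the nonlocal factor $\partial_x^{-1}$ needs care: one must work on functions with suitable decay, for example Schwartz-class $z$, where $\partial_x^{-1}$ is taken as $\int_{-\infty}^x$ and the adjoint is $-\int_x^{\infty}$. The identities would then be extended by density to the relevant domains in $H^{n+1}(\mathbb{R})$. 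If the abstract Magri argument proves unwieldy, my fallback is to verify the $n=1$ commutator identity directly, using the explicit form of $L_1$ and the profile equations \eqref{profile_eq1} and \eqref{profile_eq2}. This is a finite computation involving $M_\phi^{-3}$ and $m_\phi/M_\phi$.
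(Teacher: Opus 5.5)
Your argument is correct, but its core step differs from the paper's. The paper never uses the hereditary or strong-symmetry property. By \eqref{2.73_style}, $\widehat{\mathcal{R}}(\phi)L_n=L_{n+1}=E_{n+1}''(\phi)+\omega E_n''(\phi)$ is a combination of Hessians and hence self-adjoint, so $L_n\widehat{\mathcal{R}}^*(\phi)=\widehat{\mathcal{R}}(\phi)L_n$. One application of $\mathcal{J}\widehat{\mathcal{R}}=\widehat{\mathcal{R}}^*\mathcal{J}$ then gives $\mathcal{J}L_n\widehat{\mathcal{R}}^*=\mathcal{J}\widehat{\mathcal{R}}L_n=\widehat{\mathcal{R}}^*\mathcal{J}L_n$ for all $n$ at once, and \eqref{3.5} follows by taking adjoints.

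Both ingredients are already in your Step 1; for $n=1$ this is just the symmetry of $L_2=\widehat{\mathcal{R}}L_1$. So the ``main obstacle'' you single out in Step 2 can be bypassed completely. What your route buys is a conceptual explanation: the linearization $\mathcal{J}L_1$ of the flow $K$ at its zero $\phi$ commutes with every strong symmetry of $K$. It also reduces all $n$ to $n=1$, so you never appeal to $L_{n+1}$. The price is the Magri--Fuchssteiner machinery and its nonlocal domain issues.

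If you keep Step 2, note that hereditariness alone does not yield $\widehat{\mathcal{R}}^{*\prime}[K]-[K',\widehat{\mathcal{R}}^*]=0$. You also need one of two facts. Either $\widehat{\mathcal{R}}^*$ is a strong symmetry of the seed flow $\mathcal{J}E_0'=2u_x$ (automatic by translation invariance), which hereditariness then propagates to $\widehat{\mathcal{R}}^*\mathcal{J}E_0'=\mathcal{J}E_1'$. Or $K$ is Hamiltonian with respect to both transported Poisson operators.
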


\begin{proof}
	Due to the adjoint relationship, it suffices to prove \eqref{3.6}. Recall from Proposition \ref{prop2.2} that the operator $\widehat{\mathcal{R}}(\phi) L_n = L_{n+1}$ is self-adjoint. This symmetry implies:
	\begin{equation*}
		(\widehat{\mathcal{R}}(\phi) L_n)^* = L_n \widehat{\mathcal{R}}^*(\phi) = \widehat{\mathcal{R}}(\phi) L_n.
	\end{equation*}
	Furthermore, invoking the compatibility condition between the recursion operator and the Hamiltonian structure, we have $\mathcal{J} \widehat{\mathcal{R}}(\phi) = \widehat{\mathcal{R}}^*(\phi) \mathcal{J}$. Substituting these relations, we obtain
	\begin{equation*}
		\mathcal{J} L_n \widehat{\mathcal{R}}^*(\phi) = \mathcal{J} \widehat{\mathcal{R}}(\phi) L_n = \widehat{\mathcal{R}}^*(\phi) \mathcal{J} L_n,
	\end{equation*}
	which verifies the claimed identity.
\end{proof}

A significant consequence of the commutation relations \eqref{3.5} and \eqref{3.6} is that the (adjoint) recursion operator $\widehat{\mathcal{R}}(\phi)$ ($\widehat{\mathcal{R}}^*(\phi)$) and the operator $L_n \mathcal{J}$ ($\mathcal{J} L_n$) share a common set of eigenfunctions. Next, we will investigate the spectrum of the operator $\mathcal{J} L_n$, which is analytically more accessible than that of $L_n$. By leveraging \eqref{3.6}, the problem is reduced to the spectral analysis of the adjoint recursion operator $\widehat{\mathcal{R}}^*(\phi)$. Then we demonstrate that the eigenfunctions of $\widehat{\mathcal{R}}^*(\phi)$, augmented by the generalized kernel of $\mathcal{J} L_n$, constitute an orthogonal basis in $L^2(\mathbb{R})$, establishing a completeness relation. Finally, analyze the spectral information of $\mathcal{L}_N$ based on the analysis results.

\subsection{The spectrum of the recursion operator around the mCH single solitons}

The spectrum of the recursion operator
\begin{align}
	\widehat{\mathcal{R}}(\phi)=-\frac{2}{\gamma}\left(1-\partial_x^2\right)\left(1-\frac{m_{\phi}}{M_{\phi}}\partial_x^{-1}\frac{m_{\phi}}{M_{\phi}}\partial_x\right),
\end{align}
and its adjoint operator $\widehat{\mathcal{R}}^*(\phi)$ are essential for analyzing the linearized Hamiltonian $L_n$ in \eqref{eq:factorization_LNj}. Although these recursion operators are nonlocal and difficult to investigate directly, we can characterize their spectral properties by employing the operator identities in Proposition \ref{prop3.1} and the properties of the squared eigenfunctions $F^\pm (x,k):=(\psi_1^\pm)^2(x,k)+(\psi_2^\pm)^2(x,k)$.

\begin{prop}\label{prop3.2}
	The recursion operator $\widehat{\mathcal{R}}(\phi)$ possesses exactly one discrete eigenvalue $-\frac{2}{c}$ associated with the eigenfunction $m_\phi$. Its essential spectrum is the interval $\left(-\infty, -\frac{2}{\gamma}\right]$, and the corresponding generalized eigenfunctions do not belong to $L^2(\mathbb{R})$. Moreover, the kernel of $\widehat{\mathcal{R}}(\phi)$ is empty for any $\gamma > 0$.
\end{prop}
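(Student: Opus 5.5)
The proof splits into three parts: the discrete eigenvalue, the essential spectrum, and the kernel.

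\textbf{Discrete eigenvalue.} We start from the first-variation identity \eqref{2.19} with $k=1$. Since $E_1'(\phi)=\widehat{\mathcal{R}}(\phi)E_0'(\phi)$ and $E_0'(\phi)=2m_\phi$, \eqref{2.19} gives
\begin{align*}
\widehat{\mathcal{R}}(\phi)m_\phi=-\omega m_\phi,\qquad \omega=\tfrac{2}{c}.
\end{align*}
The profile $\phi$ is smooth and decays exponentially, so $m_\phi\in L^2(\mathbb{R})$, and $-2/c$ is a genuine eigenvalue. For uniqueness of the discrete spectrum, we would use the commutation relation \eqref{3.6} together with the completeness of the squared eigenfunctions \eqref{complete}. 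First we show that $(1-\partial_x^2)F^-(x,k)$ (equivalently $F^-_x$ for the adjoint) is a generalized eigenfunction of $\widehat{\mathcal{R}}(\phi)$, with eigenvalue $-\frac{2}{\gamma}\cdot\frac{1}{\gamma\lambda^2(k)/2}$ or a similar expression in $\lambda(k)$. To derive this, differentiate the Lax equation \eqref{lax} for the products $\psi_i\psi_j$. This gives a first-order system in which $\lambda^{-2}$ acts as the eigenvalue of the inverse recursion operator $\mathcal{R}(\phi)=-(m\partial_x^{-1}m\partial_x+\gamma/2)(1-\partial_x^2)^{-1}$. We then invert, using $M^2=m^2+\gamma/2$ to match the explicit form \eqref{inverse}. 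At the discrete point $k_1^-$ with $c=2/\lambda_1^2$, this eigenvalue equals $-2/c$, and the eigenfunction agrees with $m_\phi$ up to a constant. The derivative $\frac{d}{dk}F^-(x,k_1^-)$ generates only a Jordan-type generalized eigenvector, not a new eigenvalue. Because the family in \eqref{complete} is complete, any $L^2$ eigenfunction must expand in it. Eigenfunctions with distinct eigenvalues are independent, so no other $L^2$ eigenvalue can occur.

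\textbf{Essential spectrum.} For $k\in\mathbb{R}^+$ we have $\mu(k)\in\mathbb{R}$, and $\lambda(k)=\frac{1}{\sqrt{2\gamma}}(k+1/k)$ ranges over $[\sqrt{2/\gamma},\infty)$. The eigenvalue expression therefore sweeps out $(-\infty,-2/\gamma]$; we will need to check the sign and monotonicity carefully. A direct check against the constant-coefficient limit confirms the endpoint. As $|x|\to\infty$ we have $m_\phi\to0$, so $\widehat{\mathcal{R}}\to-\frac{2}{\gamma}(1-\partial_x^2)$, whose symbol $-\frac{2}{\gamma}(1+\xi^2)$ has range $(-\infty,-2/\gamma]$. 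Weyl's theorem then applies because the difference is relatively compact, since it carries the exponentially decaying coefficient $m/M$. The generalized eigenfunctions behave like $e^{\pm 2i\mu y}$ at infinity with $|\cdot|=1$, so they are bounded but not in $L^2$.

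\textbf{Kernel.} Suppose $\widehat{\mathcal{R}}(\phi)f=0$ with $f\in L^2$. Since $1-\partial_x^2$ is invertible, this means $f=\frac{m}{M}\partial_x^{-1}(\frac{m}{M}f_x)$. Setting $g=\partial_x^{-1}(\frac{m}{M}f_x)$ gives $g'=\frac{m}{M}(\frac{m}{M}g)'$, a first-order ODE. Multiplying by $g$ and using $m^2/M^2<1$ (guaranteed by $\gamma>0$) forces $g\equiv0$ by an energy estimate, hence $f=0$. Alternatively, $0\notin(-\infty,-2/\gamma]\cup\{-2/c\}$ already follows from the completeness argument. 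The main obstacle is the eigenvalue relation for the squared eigenfunctions, together with rigorously transferring completeness from \eqref{complete} (stated for the $(1-\partial_x^2)$-transformed family) to exclude further point spectrum. I would first verify the eigenvalue relation formally through the Lax pair computation before addressing the functional-analytic details.
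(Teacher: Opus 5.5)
Your skeleton matches the paper's proof. The Lax pair gives $\widehat{\mathcal{R}}(\phi)(1-\partial_x^2)F^\pm(x,k)=-\lambda^2(k)(1-\partial_x^2)F^\pm(x,k)$. The bound state $k_1$ gives $-\lambda_1^2=-2/c$ with eigenfunction proportional to $m_\phi$, real $k$ gives the continuum, and the Jost asymptotics \eqref{L2} give non-decay. Your additions improve on the paper in three places:
\begin{enumerate}
\item[(i)] Deriving $\widehat{\mathcal{R}}(\phi)m_\phi=-\frac{2}{c}m_\phi$ from \eqref{2.19} and $E_0'(\phi)=2m_\phi$ supplies the identity the paper's proof uses without comment. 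It is sound: at $k=1$ it reduces to $m_\phi/M_\phi=\sqrt{2\gamma}\,\phi/c$, which follows from \eqref{profile_eq1}--\eqref{profile_eq2}.
\item[(ii)] The completeness step makes explicit the exclusion of further point spectrum, which the paper leaves implicit. The uniqueness of expansion you need there comes from the biorthogonal closure relation \eqref{close}, which pairs \eqref{set2} with the adjoint family \eqref{set1}; completeness of \eqref{complete} alone does not give it.
\item[(iii)] The ODE argument replaces the paper's bare ``direct computation'' for the kernel.
\end{enumerate}

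Three details are wrong as written.

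\textbf{The eigenvalue formula.} Your tentative eigenvalue $-\frac{2}{\gamma}\cdot\frac{2}{\gamma\lambda^2(k)}$ contradicts both of your conclusions. Since $\lambda^2(k)\ge 2/\gamma$ for real $k$, it sweeps out $[-2/\gamma,0)$, and at $k_1$ it equals $-2c/\gamma^2\neq -2/c$. The correct value is $-\lambda^2(k)$. Your own constant-coefficient check produces it: $F^\pm$ oscillates at frequency $2\mu$, and $-\frac{2}{\gamma}(1+4\mu^2)=-\frac{1}{2\gamma}(k+\frac{1}{k})^2=-\lambda^2$.

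\textbf{The Weyl step.} Relative compactness fails. Set $h=m_\phi/M_\phi$; expanding $(1-\partial_x^2)[h\,\partial_x^{-1}(hf_x)]$ produces $-h^2f_{xx}$. So $\widehat{\mathcal{R}}(\phi)+\frac{2}{\gamma}(1-\partial_x^2)$ is a second-order perturbation of a second-order operator; this is the variable principal coefficient $1/M^2$ of Lemma \ref{lem:variational_derivative_order}. A decaying coefficient times an order-zero multiplier that does not decay in $\xi$ is not compact. The fix is to prove compactness of the resolvent difference instead:
\begin{enumerate}
\item[(a)] Write $A_\infty-\widehat{\mathcal{R}}(\phi)=-\frac{2}{\gamma}(1-\partial_x^2)\,h\partial_x^{-1}h\partial_x$ with $A_\infty=-\frac{2}{\gamma}(1-\partial_x^2)$.
\item[(b)] Check that $(\widehat{\mathcal{R}}(\phi)-z)^{-1}(1-\partial_x^2)$ is bounded.
\item[(c)] Use that $h\partial_x^{-1}h\partial_x(A_\infty-z)^{-1}$ is compact.
\end{enumerate}
Since $\widehat{\mathcal{R}}(\phi)$ is not self-adjoint, you also need the non-self-adjoint (Fredholm) version of Weyl's theorem.

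\textbf{The kernel argument.} Your ``energy estimate'' is really the exact identity $\frac{d}{dx}[(1-h^2)g^2]=0$, and it does not by itself give $g=0$. The ODE $g'=h(hg)'$ has the nontrivial solutions $g=CM_\phi$, i.e. $f=Cm_\phi$. What excludes them is the normalization $g(-\infty)=0$ built into $\partial_x^{-1}$, together with $M_\phi\to\sqrt{\gamma/2}\neq 0$. That normalization is legitimate because $hf_x\in L^1$. State this explicitly: it is exactly why $m_\phi$, which solves the same ODE, is an eigenfunction for $-2/c$ rather than an element of the kernel.
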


\begin{proof}
	Consider the Jost solutions $\psi^\pm(x,k)$ of the spectral problem \eqref{lax} with the potential $m_\phi$ and the asymptotic expressions in \eqref{L2}. In this case, there exists a discrete eigenvalue $k=i\kappa_1$ which generates the soliton profile. By the spatial spectrum problem \eqref{lax}, one obtains:
	\begin{align}
		&\widehat{\mathcal{R}}(\phi)\left(1-\partial_x^2\right)F^\pm(x,k)=-\lambda^2\left(1-\partial_x^2\right)F^\pm(x,k), \quad \forall k \in \mathbb{R}, \label{3.11}\\
		&\widehat{\mathcal{R}}(\phi)\left(1-\partial_x^2\right)F^\pm_1(x)=-\lambda^2_1\left(1-\partial_x^2\right)F^\pm_1(x)=-\frac{2}{c} m_\phi. \label{3.12}
	\end{align}
	Since $\widehat{\mathcal{R}}(\phi)m_\phi = -\frac{2}{c}m_\phi$, we have $(1-\partial_x^2)F_1^\pm(x) \sim m_\phi$. The essential spectrum of $\widehat{\mathcal{R}}(\phi)$ is given by the set of values $-\lambda^2 = \frac{1}{2\gamma}(k+\frac{1}{k})^2$ for $k \in \mathbb{R}$, which corresponds to the interval $\left(-\infty, -\frac{2}{\gamma}\right]$. The associated generalized eigenfunctions $(1-\partial_x^2)F^\pm(x,k)$ exhibit no spatial decay and thus do not belong to $L^2(\mathbb{R})$, as evidenced by the asymptotic formulas in \eqref{L2}.
	
	Finally, a direct computation shows that the kernel of $\widehat{\mathcal{R}}(\phi)$ is empty for $\gamma > 0$. The proof is complete.
\end{proof}

We now consider the adjoint recursion operator $\widehat{\mathcal{R}}^*(\phi)$. In view of the commutation relation \eqref{3.6}, $\widehat{\mathcal{R}}^*(\phi)$ shares the same eigenfunctions as $\mathcal{J}L_n$, which is particularly relevant for the spectral stability analysis of solitons. Recall that:
\begin{align}
	\widehat{\mathcal{R}}^*(\phi)=-\frac{2}{\gamma}\left(1-\frac{m_{\phi}}{M_{\phi}}\partial_x^{-1}\frac{m_{\phi}}{M_{\phi}}\partial_x\right)\left(1-\partial_x^2\right).
\end{align}
From \eqref{3.11}, it follows that:
\begin{align}\label{self}
	\widehat{\mathcal{R}}^*(\phi)\mathcal{J}F^\pm(x,k)=-\lambda^2\mathcal{J}F^\pm(x,k).
\end{align}
The spectral properties of the adjoint operator $\widehat{\mathcal{R}}^*(\phi)$ are summarized as follows.

\begin{prop}\label{prop3.3}
	The adjoint recursion operator $\widehat{\mathcal{R}}^*(\phi)$ possesses exactly one discrete eigenvalue $-\frac{2}{c}$ associated with the translational eigenfunction $\phi_x$. Its essential spectrum is the interval $\left(-\infty, -\frac{2}{\gamma}\right]$, and the corresponding generalized eigenfunctions lack spatial decay. Moreover, the kernel of $\widehat{\mathcal{R}}^*(\phi)$ is empty.
\end{prop}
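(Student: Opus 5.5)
The plan is to transfer the spectral information of Proposition \ref{prop3.2} to the adjoint operator by duality, using the intertwining relation $\mathcal{J}\widehat{\mathcal{R}}(\phi)=\widehat{\mathcal{R}}^*(\phi)\mathcal{J}$ already invoked in the proof of Proposition \ref{prop3.1}. Since $\mathcal{J}=\partial_x(1-\partial_x^2)^{-1}$, the relation $\widehat{\mathcal{R}}^*(\phi)\mathcal{J}F^\pm(x,k)=-\lambda^2\mathcal{J}F^\pm(x,k)$ in \eqref{self} shows that $\mathcal{J}$ applied to each (generalized) eigenfunction $(1-\partial_x^2)F^\pm$ of $\widehat{\mathcal{R}}(\phi)$ produces a (generalized) eigenfunction of $\widehat{\mathcal{R}}^*(\phi)$ with the same eigenvalue. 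Thus the essential spectrum part follows as in Proposition \ref{prop3.2}. For $k\in\mathbb{R}$, $-\lambda^2(k)=-\frac{1}{2\gamma}(k+1/k)^2$ sweeps $(-\infty,-\frac{2}{\gamma}]$, with the extremal value attained at $|k|=1$. The functions $\partial_xF^\pm(x,k)$ behave like $e^{\pm 2i\mu y}$ times constants as $|x|\to\infty$ by \eqref{L2}, so they are bounded but not in $L^2(\mathbb{R})$.

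For the discrete eigenvalue, I would apply the intertwining relation to $m_\phi$. This gives $\widehat{\mathcal{R}}^*(\phi)\mathcal{J}m_\phi=\mathcal{J}\widehat{\mathcal{R}}(\phi)m_\phi=-\frac{2}{c}\mathcal{J}m_\phi$. Since $\mathcal{J}m_\phi=\partial_x(1-\partial_x^2)^{-1}(1-\partial_x^2)\phi=\phi_x$, we obtain $\widehat{\mathcal{R}}^*(\phi)\phi_x=-\frac{2}{c}\phi_x$. As a consistency check, I would also verify this directly. Using \eqref{2.19}, $E_1'(\phi)=-\omega E_0'(\phi)=-2\omega m_\phi$. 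Differentiating the translation invariance $E_k(\phi(\cdot+h))=E_k(\phi)$ shows that $\phi_x$ is compatible with the recursion. Concretely, $(1-\partial_x^2)\phi_x=\partial_x m_\phi$, and the action of $\left(1-\frac{m}{M}\partial_x^{-1}\frac{m}{M}\partial_x\right)$ on it reduces, via $\partial_x^{-1}(\frac{m}{M}m_x)=M-\sqrt{\gamma/2}$, to a local expression that can be matched with the profile equation \eqref{profile_eq1}.

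Uniqueness of the discrete eigenvalue and emptiness of the kernel I would obtain by a contradiction argument. Suppose $\widehat{\mathcal{R}}^*(\phi)f=\nu f$ with $f\in L^2$ and $\nu\neq -\frac{2}{c}$, or suppose $\nu=0$. By completeness of the second set after \eqref{complete} (the $\partial_x F$-family, i.e. the $\mathcal{J}$-images up to the smoothing factor $(1-\partial_x^2)^{-1}$), $f$ expands in $\{\partial_xF^-(x,k)\}_{k\in\mathbb{R}^+}$, $\partial_xF^-(x,k_1^-)$, and $\frac{d}{dk}\partial_xF^-(x,k_1^-)$. Applying $\widehat{\mathcal{R}}^*-\nu$ termwise gives a coefficient factor $(-\lambda^2-\nu)$ on the continuous family. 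That factor forces $f$ to be supported, spectrally, only on the set where $-\lambda^2=\nu$, which is impossible for $L^2$ functions (a single level set has measure zero in $k$). This leaves only the discrete part, whose eigenvalue is $-\lambda_1^2=-\frac{2}{c}$. The derivative element $\frac{d}{dk}\partial_xF^-$ is only a generalized eigenvector, since $(\widehat{\mathcal{R}}^*+\lambda_1^2)$ maps it to a multiple of $\phi_x$. Hence it does not give a new eigenvalue. For the kernel, $0\notin(-\infty,-\frac{2}{\gamma}]\cup\{-\frac{2}{c}\}$ because $c>0$, so the kernel is trivial. I would double-check this directly: $\widehat{\mathcal{R}}^*f=0$ means $(1-\frac{m}{M}\partial_x^{-1}\frac{m}{M}\partial_x)g=0$ with $g=(1-\partial_x^2)f$. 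This integrates to a first-order ODE whose only decaying solution is zero.

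The main obstacle is justifying the expansion step rigorously. This requires the completeness relation from \cite{TAT} to hold in a topology where $\widehat{\mathcal{R}}^*$ (a nonlocal operator involving $\partial_x^{-1}$) can be applied termwise. I would handle this by working on the dense subspace of Schwartz functions with $\int\frac{m}{M}\partial_x g\,dx$ controlled, and then extending by closedness of the operator.
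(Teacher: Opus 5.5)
Your proposal is correct and follows the paper's route. You push the squared-eigenfunction relations of Proposition~\ref{prop3.2} through $\mathcal{J}$, so that $(1-\partial_x^2)F^\pm\mapsto\partial_xF^\pm$ and $m_\phi\mapsto\mathcal{J}m_\phi=\phi_x$, and you read off the essential spectrum from $-\lambda^2(k)$, $k\in\mathbb{R}$. Your completeness argument for uniqueness and your ODE argument for the trivial kernel make explicit what the paper calls ``uniquely identified'' and ``direct calculation.''

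One caution about your optional direct check. The reduction via $\partial_x^{-1}(\frac{m}{M}m_x)=M-\sqrt{\gamma/2}$, and the intertwining $\mathcal{J}\widehat{\mathcal{R}}=\widehat{\mathcal{R}}^*\mathcal{J}$ itself, only work for the genuine $L^2$-adjoint $-\frac{2}{\gamma}(1-\partial_x\frac{m}{M}\partial_x^{-1}\frac{m}{M})(1-\partial_x^2)$. They do not work for the displayed form with $\frac{m}{M}\partial_x^{-1}\frac{m}{M}\partial_x$ on the left, which you reused. With the genuine adjoint you get $\widehat{\mathcal{R}}^*(\phi)\phi_x=-\sqrt{2/\gamma}\,\partial_x(m_\phi/M_\phi)=-\frac{2}{c}\phi_x$, because \eqref{profile_eq1}--\eqref{profile_eq2} give $m_\phi/M_\phi=\frac{\sqrt{2\gamma}}{c}\phi$.
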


\begin{proof}
	Consider the Jost solutions $\psi^\pm(x,k)$ of the spectral problem \eqref{lax} with the potential $m_\phi$. As established, the discrete eigenvalue $k=i\kappa_1$ corresponds to the soliton profile. From \eqref{self}, the following relations hold:
	\begin{align}
		&\widehat{\mathcal{R}}^*(\phi)\partial_x F^\pm(x,k)=-\lambda^2\partial_x F^\pm(x,k), \quad \forall k \in \mathbb{R}, \label{3.15}\\
		&\widehat{\mathcal{R}}^*(\phi)\partial_xF^\pm_1(x)=-\lambda^2_1\partial_xF^\pm_1(x)=-\frac{2}{c} \phi_x. \label{3.16}
	\end{align}
	Since $\widehat{\mathcal{R}}^*(\phi)\phi_x = -\frac{2}{c}\phi_x$, the discrete eigenvalue is uniquely identified. Similar to Proposition \ref{prop3.2}, the essential spectrum is $\left(-\infty, -\frac{2}{\gamma}\right]$, and the generalized eigenfunctions $\partial_x F^\pm(x,k)$ do not belong to $L^2(\mathbb{R})$. The empty kernel for $\gamma > 0$ is verified by direct calculation.
\end{proof}

\subsection{The spectra of $\mathcal{J}L_n$ and $L_n\mathcal{J}$}
In this subsection, our attention is focused on the spectral analysis of the operators $\mathcal{J}L_n$ and $L_n\mathcal{J}$. The main ingredients are the commutation relations \eqref{3.6} and the observation that the eigenfunctions of the adjoint recursion operator $\widehat{\mathcal{R}}^*(\phi)$ form an orthogonal basis in $L^2(\mathbb{R})$. It follows that the spectrum of $\mathcal{J}L_n$ lies on the imaginary axis, which implies directly the spectral stability of solitons.

We now consider the operator $\mathcal{J}L_n$. Since $L_n=\widehat{\mathcal{R}}^{n-1}(\phi)L_1$, we first examine the asymptotic constant-coefficient form (i.e., the principal part as $|x| \to \infty$) of $L_1$, which is given by $-\frac{4}{\gamma}\left(1-\partial_x^2\right)^2+\frac{4}{c}\left(1-\partial_x^2\right)$. Hence, the principal symbol of $L_1$ is
\begin{align}
	\rho_1(\xi)=-\frac{4}{\gamma}\left(1+\xi^2\right)^2+\frac{4}{c}\left(1+\xi^2\right).
\end{align}
Due to the relation $L_n=(\widehat{\mathcal{R}}(\phi))^{n-1}L_1$, and considering that the symbol of $\mathcal{J}=\partial_x(1-\partial_x^2)^{-1}$ is $\frac{i\xi}{1+\xi^2}$ while the asymptotic symbol of $\widehat{\mathcal{R}}$ is $-\frac{2}{\gamma}(1+\xi^2)$, we can obtain the principal symbol of $\mathcal{J}L_n$ as
\begin{align}\label{3.18}
	\varrho_n(\xi)=(-1)^{n-1}i\xi\left(\frac{2}{\gamma}\right)^{n-1}\left(1+\xi^2\right)^{n-2}\rho_1(\xi).
\end{align}
Based upon the facts above, we have the following statement related to the spectrum of the operator $\mathcal{J}L_n$ for $1\leq n\leq N$.

\begin{prop}\label{prop3.4}
	The operators $\mathcal{J}L_n$ for $1\leq n\leq N$ and the adjoint recursion operator $\widehat{\mathcal{R}}^*(\phi)$ share the same eigenfunctions. Moreover, the essential spectra of $\mathcal{J}L_n$ are contained in $i\mathbb{R}$, the kernel is spanned by the function $\phi_{x}$, and the generalized kernel is spanned by $\frac{\partial \phi}{\partial c}$.
\end{prop}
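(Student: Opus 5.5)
The plan rests on the commutation relation \eqref{3.6} of Proposition \ref{prop3.1}, combined with the spectral description of $\widehat{\mathcal{R}}^*(\phi)$ in Proposition \ref{prop3.3}. I will take the functions $\mathcal{J}F^\pm(x,k)$ for $k\in\mathbb{R}$, together with the discrete ones $\mathcal{J}F_1^\pm \sim \phi_x$ and $\frac{d}{dk}\mathcal{J}F^\pm(x,k_1^-)$, as the candidate common eigenbasis. By the completeness result \eqref{complete} from \cite{TAT}, after applying $\mathcal{J}$, these span $L^2(\mathbb{R})$ in the generalized sense.

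\textbf{Shared eigenfunctions.} Fix $k\in\mathbb{R}$. By \eqref{self}, $\mathcal{J}F^\pm(x,k)$ is a generalized eigenfunction of $\widehat{\mathcal{R}}^*(\phi)$ with eigenvalue $-\lambda^2(k)$. Relation \eqref{3.6} shows that $\mathcal{J}L_n\mathcal{J}F^\pm$ lies in the same eigenspace. For $\lambda^2>2/\gamma$ this eigenspace is spanned by $\mathcal{J}F^+$ and $\mathcal{J}F^-$, and the two have opposite asymptotic phases $e^{\pm 2i\mu y}$. Because $\mathcal{J}L_n$ commutes with the translation-like asymptotics at $x\to+\infty$, it preserves each phase, so $\mathcal{J}L_n\mathcal{J}F^\pm=\Omega_n^\pm(k)\mathcal{J}F^\pm$.

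\textbf{Essential spectrum.} I will compute $\Omega_n^\pm(k)$ by comparing leading asymptotics as $x\to+\infty$, where $\mathcal{J}F^\pm\sim e^{\pm 2i\mu x}$ up to the phase correction in $y$. This amounts to evaluating the symbol \eqref{3.18} at $\xi=\pm 2\mu(k)$, giving
\[
\Omega_n^\pm(k)=\varrho_n(\pm2\mu).
\]
Since $\rho_1$ is real and $\varrho_n$ equals $i\xi$ times a real function, $\Omega_n^\pm\in i\mathbb{R}$. Completeness then places the essential spectrum of $\mathcal{J}L_n$ in $i\mathbb{R}$.

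\textbf{Discrete part.} Translation invariance of the flow gives $L_n\phi_x=0$, because $\phi$ is a critical point of $E_n+\omega E_{n-1}$ by \eqref{2.19}. Hence $\phi_x\in\ker\mathcal{J}L_n$. Differentiating the Euler--Lagrange relation $E_n'(\phi_c)+\omega E_{n-1}'(\phi_c)=0$ in $c$ and using \eqref{2.19} gives
\[
L_n\partial_c\phi=-\frac{d\omega}{dc}E_{n-1}'(\phi)=\frac{2}{c^2}(-\omega)^{n-1}\cdot 2m_\phi .
\]
Applying $\mathcal{J}$, and using $\mathcal{J}m_\phi=\partial_x\phi$, yields $\mathcal{J}L_n\partial_c\phi=\text{const}\cdot\phi_x$ with a nonzero constant. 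So $\partial_c\phi$ lies in the generalized kernel.

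\textbf{Exactness of the kernel.} To show nothing else lies in the kernel, I will expand an arbitrary element of the generalized kernel in the basis above. Continuous modes have $\Omega_n^\pm(k)\neq0$ for $k\neq0$, as $\rho_1$ does not vanish on the relevant range, so their coefficients must vanish. The only remaining directions are $\phi_x$ and $\frac{d}{dk}\mathcal{J}F^\pm(k_1)$, the latter being proportional to $\partial_c\phi$ modulo $\phi_x$.

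The main obstacle is the claim that the eigenvalue $\Omega_n^\pm(k)$ is given by the constant-coefficient symbol. This needs $\mathcal{J}L_n$ to act on the Jost asymptotics exactly as its limiting operator does, including the nonlocal $\partial_x^{-1}$ terms inside $\widehat{\mathcal{R}}$. My first attempt would be to write $\mathcal{J}L_n=(\widehat{\mathcal{R}}^*)^{n-1}\mathcal{J}L_1$, which follows from $L_n=\widehat{\mathcal{R}}^{n-1}L_1$ and $\mathcal{J}\widehat{\mathcal{R}}=\widehat{\mathcal{R}}^*\mathcal{J}$. This reduces the problem to $n=1$. I would then check on $\mathcal{J}L_1$ that $\partial_x^{-1}$ of decaying functions contributes no extra constant at $+\infty$, normalizing $\partial_x^{-1}=\int_{+\infty}^x$ there.
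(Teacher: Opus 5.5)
Your proposal follows the paper's own route. The paper uses the commutation identity \eqref{3.6} to get shared eigenfunctions, takes the squared eigenfunctions with eigenvalues $\varrho_n(\pm 2\mu)\in i\mathbb{R}$ read off from \eqref{3.18}, and uses the translation and scaling modes to get $\mathcal{J}L_n\phi_x=0$ and $\mathcal{J}L_n\partial_c\phi=(-1)^{n+1}\omega^{n+1}\phi_x$, exactly as in \eqref{3.19}--\eqref{3.21}; your reduction $\mathcal{J}L_n=(\widehat{\mathcal{R}}^*)^{n-1}\mathcal{J}L_1$ and your exactness check are useful detail the paper omits. Two small corrections: the eigenfunctions coming from \eqref{3.11} are $\partial_xF^\pm=\mathcal{J}(1-\partial_x^2)F^\pm$ as in \eqref{3.15}, not $\mathcal{J}F^\pm$ as the display \eqref{self} misprints, and this is what makes the discrete mode $\sim\phi_x$; also $\varrho_n(\pm 2\mu(k))$ vanishes at $k=\pm1$, where $\mu=0$, not at $k=0$.
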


\begin{proof}
	That the operators $\mathcal{J}L_n$ for $1\leq n\leq N$ and the adjoint recursion operator $\widehat{\mathcal{R}}^*(\phi)$ share the same eigenfunctions is inferred from the operator identity \eqref{3.6}. By Proposition \ref{prop3.3}, one can compute the spectra of the operator $\mathcal{J}L_n$ directly by employing the squared eigenfunctions as follows:
	\begin{align}
		&\mathcal{J}L_n\partial_xF^\pm(x,k)=\varrho_n(\pm 2\mu)\partial_xF^\pm(x,k), \quad \text{for}~ k\in \mathbb{R},\label{3.19}\\
		&\mathcal{J}L_n\partial_xF_1^\pm(x)\sim \mathcal{J}L_n\partial_x\phi=0,\label{3.20}\\
		&\mathcal{J}L_n\frac{\partial F_1(x,k)}{\partial c}\sim \mathcal{J}L_n\frac{\partial \phi}{\partial c}=(-1)^{n+1}\omega^{n+1} \partial_x \phi.\label{3.21}
	\end{align}
	In view of \eqref{3.18} and \eqref{3.19}, the essential spectra of $\mathcal{J}L_n$ are $\varrho_n(\pm 2\mu)$ for $k\in \mathbb{R}$, which are contained entirely within the imaginary axis. This gives the desired result in Proposition \ref{prop3.4}.
\end{proof}

For the adjoint operator of $\mathcal{J}L_n$, namely, the operator $L_n\mathcal{J}$ which commutes with the recursion operator $\widehat{\mathcal{R}}(\phi)$, we have the following result.

\begin{prop}\label{prop3.5}
	The operators $L_n\mathcal{J}$ for $1\leq n\leq N$ and the recursion operator $\widehat{\mathcal{R}}(\phi)$ share the same eigenfunctions. Moreover, the essential spectra of $L_n\mathcal{J}$ are contained in $i\mathbb{R}$, the kernel is spanned by the function $m_\phi$, and the generalized kernel is spanned by $\partial_x^{-1}\left(\frac{\partial m_{\phi}}{\partial c}\right)$.
\end{prop}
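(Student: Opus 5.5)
The plan is to obtain Proposition \ref{prop3.5} from Proposition \ref{prop3.4} by duality. Since $L_n$ is self-adjoint and $\mathcal{J}$ is skew-symmetric, we have $(\mathcal{J}L_n)^* = -L_n\mathcal{J}$. The first step is to record the commutation relation \eqref{3.5} of Proposition \ref{prop3.1}, namely $L_n\mathcal{J}\widehat{\mathcal{R}}(\phi)=\widehat{\mathcal{R}}(\phi)L_n\mathcal{J}$. It follows that $L_n\mathcal{J}$ preserves each eigenspace and generalized eigenspace of $\widehat{\mathcal{R}}(\phi)$. Proposition \ref{prop3.2} tells us these are spanned by $(1-\partial_x^2)F^\pm(x,k)$ for $k\in\mathbb{R}$ (the continuous part), by $m_\phi\sim(1-\partial_x^2)F_1$, and by the $k$-derivative $(1-\partial_x^2)\frac{d}{dk}F$ at $k_1^-$, using the completeness of the set \eqref{complete}. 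Because the continuous eigenvalue $-\lambda^2$ is a two-to-one function of $k$, I will treat the pair $F^\pm$ together, and use the symbol computation in the next step to separate them.

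Second, I will transport the eigenrelations of $\mathcal{J}L_n$ in \eqref{3.19}--\eqref{3.21} through $\mathcal{J}$. Applying $\mathcal{J}$ on the left gives $(L_n\mathcal{J})\mathcal{J}=\mathcal{J}^{-1}(\mathcal{J}L_n)\mathcal{J}$ on the range of $\mathcal{J}$. This suggests the intertwining $L_n\mathcal{J}\,\mathcal{J}^{-1}g=\mathcal{J}^{-1}(\mathcal{J}L_n)g$, that is, $L_n g = \mathcal{J}^{-1}\mathcal{J}L_n g$. Concretely, take $g=\partial_xF^\pm$. Then $\mathcal{J}^{-1}g=(1-\partial_x^2)F^\pm$, and so
\begin{align*}
L_n\mathcal{J}\,(1-\partial_x^2)F^\pm(x,k)=\varrho_n(\pm2\mu)\,(1-\partial_x^2)F^\pm(x,k).
\end{align*}
The essential spectrum therefore coincides with that of $\mathcal{J}L_n$. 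Since the symbol $\varrho_n$ in \eqref{3.18} is $i\xi$ times a real function of $\xi$, this spectrum lies in $i\mathbb{R}$. Alternatively, one can argue directly that the asymptotic symbol of $L_n\mathcal{J}$ equals $\varrho_n$, because constant-coefficient symbols commute.

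Third comes the kernel. We have $\mathcal{J}m_\phi=\partial_x(1-\partial_x^2)^{-1}(1-\partial_x^2)\phi=\phi_x$, and $L_n\phi_x=0$ by translation invariance: differentiate the Euler--Lagrange identity \eqref{2.19}, written in the form $E_n'(\phi)+\omega E_{n-1}'(\phi)=0$, in $x$. Hence $L_n\mathcal{J}m_\phi=0$. For the generalized kernel, set $g:=\partial_x^{-1}\partial_c m_\phi$. Then $\mathcal{J}g=(1-\partial_x^2)^{-1}\partial_c m_\phi=\partial_c\phi$. Differentiating the Euler--Lagrange identity in $c$ gives $L_n\partial_c\phi=-\partial_c\omega\,E_{n-1}'(\phi)$, which is a multiple of $(-\omega)^{n-1}\cdot 2m_\phi$ by \eqref{2.19}. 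So $L_n\mathcal{J}g$ is a nonzero multiple of $m_\phi$, consistent with the constant in \eqref{3.21}. Well-definedness of $g$ requires $\int\partial_c m_\phi\,dx=0$. This holds because $\int m_\phi=\int\phi$, and I would check that this quantity is $c$-independent, or otherwise adjust $g$ by a constant in the appropriate space.

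The main obstacle is exactness: showing that the kernel is one-dimensional and the generalized kernel is exactly two-dimensional, with no further $L^2$ eigenvalues. I would get this from completeness, using the second complete set of $\partial_x F$ type together with its image under $\mathcal{J}^{-1}$. Any $L^2$ eigenfunction of $L_n\mathcal{J}$ decomposes along $\widehat{\mathcal{R}}$-eigenfunctions. The continuous ones are not in $L^2$, and only the discrete point $k_1^-$ contributes, with the Jordan block described above. The delicate point is justifying that $\mathcal{J}$ and $\mathcal{J}^{-1}=(1-\partial_x^2)\partial_x^{-1}$ preserve the relevant function classes, given that $\partial_x^{-1}$ is nonlocal. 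I would handle this by working with the spaces of functions with zero mean or with the exponential decay of $\phi$.
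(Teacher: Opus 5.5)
Your proposal is correct and follows essentially the same route as the paper: the commutation relation \eqref{3.5}, the squared-eigenfunction identity $L_n\mathcal{J}(1-\partial_x^2)F^\pm=L_n\partial_xF^\pm=\varrho_n(\pm 2\mu)(1-\partial_x^2)F^\pm$, the kernel via $\mathcal{J}m_\phi=\phi_x$ and $L_n\phi_x=0$, and the generalized kernel via $\mathcal{J}\partial_x^{-1}(\partial_c m_\phi)=\partial_c\phi$ together with $L_n\partial_c\phi=(-1)^{n+1}\omega^{n+1}m_\phi$. Your extra concerns (exactness of the kernel and generalized kernel via completeness, and well-definedness of $\partial_x^{-1}\partial_c m_\phi$) go beyond what the paper checks, and the second one is settled: a direct quadrature of \eqref{profile_eq2} gives $\int_{\mathbb{R}} m_\phi\,dx=\int_{\mathbb{R}}\phi_c\,dx=\pi\sqrt{2\gamma}$, which is independent of $c$, so $\partial_c m_\phi$ has zero mean.
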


\begin{proof}
	That the operators $L_n\mathcal{J}$ for $1\leq n\leq N$ and the recursion operator $\widehat{\mathcal{R}}(\phi)$ share the same eigenfunctions is inferred from the operator identity \eqref{3.5}. By Proposition \ref{prop3.2}, one can compute the spectra of the operator $L_n\mathcal{J}$ directly by employing the squared eigenfunctions as follows:
	\begin{align}
		&L_n\mathcal{J}(1-\partial_x^2)F^\pm(x,k) = L_n \partial_xF^\pm(x,k) = \varrho_n(\pm 2\mu)(1-\partial_x^2)F^\pm(x,k),\label{3.22}\\
		&L_n\mathcal{J}(1-\partial_x^2)F_1^\pm(x) \sim L_n\mathcal{J}m_\phi = L_n\partial_x \phi=0.\label{3.23}
	\end{align}
	To determine the generalized kernel, we apply the operator $L_n\mathcal{J}$ to the specific test function $\partial_x^{-1}\left(\frac{\partial m_{\phi}}{\partial c}\right)$. Since $m_\phi = (1-\partial_x^2)\phi$, we observe that $\mathcal{J} \left[ \partial_x^{-1} \left(\frac{\partial m_\phi}{\partial c}\right) \right] = \partial_x(1-\partial_x^2)^{-1} \partial_x^{-1} (1-\partial_x^2)\frac{\partial \phi}{\partial c} = \frac{\partial \phi}{\partial c}$. Consequently,
	\begin{align}
		L_n\mathcal{J} \left[ \partial_x^{-1}\left(\frac{\partial m_{\phi}}{\partial c}\right) \right] = L_n \frac{\partial \phi}{\partial c} = (-1)^{n+1}\omega^{n+1}m_\phi. \label{3.24}
	\end{align}
	In view of \eqref{3.18} and \eqref{3.22}, the essential spectra of $L_n\mathcal{J}$ are $\varrho_n(\pm 2\mu)$ for $k\in \mathbb{R}$. Furthermore, equation \eqref{3.24} rigorously demonstrates that the generalized kernel is indeed spanned by $\partial_x^{-1}\left(\frac{\partial m_{\phi}}{\partial c}\right)$, which establishes the advertised result in Proposition \ref{prop3.5}.
\end{proof}

On account of Propositions \ref{prop3.4} and \ref{prop3.5}, we now establish the following two function sets. The first set
\begin{align}\label{set1}
	\left\lbrace \partial_x F^\pm(x,k),\quad \text{for }k\in\mathbb{R};\quad  \partial_x \phi;\quad \frac{\partial\phi}{\partial c}\right\rbrace
\end{align}
consists of linearly independent eigenfunctions and the generalized kernel of the operator $\mathcal{J}L_n$. Moreover, they are essentially orthogonal under the $L^2$-inner product. The second set
\begin{align}\label{set2}
	\left\lbrace (1-\partial^2_x) F^\pm(x,k),\quad \text{for }k\in\mathbb{R};\quad m_\phi;\quad \partial_x^{-1}\left(\frac{\partial m_\phi}{\partial c}\right) \right\rbrace
\end{align}
consists of linearly independent eigenfunctions and the generalized kernel of the operator $L_n\mathcal{J}$.

Given the even symmetry of $\frac{\partial \phi}{\partial c}$, for $k,l \in \mathbb{R}$, we can employ the asymptotic behaviors of the Jost solutions \eqref{L2} to compute their inner products:
\begin{align}
	&\int_\mathbb{R}\partial_xF^\pm(x,k) (1-\partial_x^2)\overline{F^\pm(x,l)}dx=\pm 2\pi i \mu(1+k^2)\left|a(k)\right|^2\delta(k-l),\label{3.28}\\
	&\int_\mathbb{R}\partial_x\phi \partial_x^{-1}\left(\frac{\partial m_{\phi}}{\partial c}\right)dx=-\int_\mathbb{R}\phi\frac{\partial m_{\phi}}{\partial c}dx=-\frac{1}{2}\frac{\partial E_0(\phi)}{\partial c}=-\sqrt{\frac{c}{c-\gamma}}<0,\\
	&\int_\mathbb{R}\frac{\partial\phi}{\partial c}m_{\phi}dx=\frac{1}{2}\frac{\partial E_0(\phi)}{\partial c}=\sqrt{\frac{c}{c-\gamma}}>0.
\end{align}
The corresponding closure relation is thus given by
\begin{align}\label{close}
	&\pm \int_\mathbb{R} \frac{1}{2\pi i\mu(1+k^2) |a(k)|^2} \partial_xF^\pm(x,k)(1-\partial_y^2)\overline{F^\pm(y,k)}\text{d}k \nonumber\\
	&+ \sqrt{\frac{c-\gamma}{c}} \biggl[ -\partial_x\phi(x) \partial_y^{-1}\left(\frac{\partial m_{\phi}}{\partial c}\right)(y)+\frac{\partial \phi}{\partial c}(x)m_\phi(y) \biggr] = \delta(x-y).
\end{align}
This implies any function $z(x)$ vanishing at infinity can be expanded over the two bases \eqref{set1} and \eqref{set2}. In particular, we have the following decomposition of the function $z$:
\begin{align}\label{decom}
	z(x) = \int_\mathbb{R} \partial_xF^\pm(x,k) \alpha^\pm (k)\text{d}k + \beta \partial_x \phi + \eta \frac{\partial \phi}{\partial c},
\end{align}
where the coefficients $\alpha^\pm(k), \beta$ and $\eta$ can be computed explicitly via the closure relation \eqref{close} and the completeness relation (\ref{complete}). Similarly, the function $z(x)$ decomposes over the second set \eqref{set2} by multiplying \eqref{close} by $z(x)$ and integrating with respect to $x$.

\subsection{The spectra of linearized operator around the mCH $N$-solitons}
In order to prove Theorem \ref{th1.1}, we need to know the spectral information of the operator $\mathcal{L}_n$ with $1\leq n\leq N$.  More precisely, the inertia of $\mathcal{L}_n$ has to be determined. The aim of this subsection is to show the following result.
\begin{lemma}\label{prop3.7}
	The operator $\mathcal{L}_n$ around mCH $n$-solitons ($1\leq n \leq N$) verifies the following spectral property
	\begin{align}\label{3.47}
		\text{in}(\mathcal{L}_n)=(n(\mathcal{L}_n), z(\mathcal{L}_n))=\left([\frac{n+1}{2}],n\right).
	\end{align}
\end{lemma}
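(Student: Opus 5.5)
The plan is to compute the inertia in the asymptotic regime $t\to\infty$ and then transfer it back to every finite time. By the iso-inertial property of Maddocks and Sachs, $\text{in}(\mathcal{L}_n(t))$ does not depend on $t$. As $t\to\infty$, the spectrum of $\mathcal{L}_n$ converges to $\bigcup_{j=1}^n\sigma(L_{n,j})$. It therefore suffices to show two things. First, $\sum_j n(L_{n,j})=[\frac{n+1}{2}]$. Second, each $L_{n,j}$ has a one-dimensional kernel spanned by $\partial_x\phi_{c_j}$, so that $z(\mathcal{L}_n)=n$. The essential spectrum stays away from zero, because the asymptotic symbol of $L_{n,j}$ is, up to the sign $(-1)^n$, $\prod_{k}(\omega_k-\frac{2}{\gamma}(1+\xi^2))\,\frac{\gamma}{2}\cdot(\dots)$. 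Since every $\omega_k=2/c_k<2/\gamma$, each factor $\omega_k-\frac{2}{\gamma}(1+\xi^2)$ is strictly negative and bounded away from zero. Consequently the essential spectrum lies in a half-line of definite sign $(-1)^n\cdot(-1)^{n}\cdot(-1)>0$ once the overall normalisation $(-1)^n$ in $\mathcal{S}_n$ is taken into account. I will check this sign carefully, since it fixes which eigenvalues can be negative.

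For a fixed $j$, I expand $z$ in the basis \eqref{set1} via the decomposition \eqref{decom} and evaluate the quadratic form $\langle L_{n,j}z,z\rangle$. The factorization \eqref{eq:factorization_LNj} is used together with the commutation identities of Proposition~\ref{prop3.1}. Pairing $L_{n,j}$ applied to the continuous-spectrum part $\partial_xF^\pm(x,k)$ against $(1-\partial_x^2)F^\pm$ through \eqref{3.22} and \eqref{3.28} shows that this part diagonalizes. Its contribution is $\int \varrho$-type weight $\times|\alpha^\pm(k)|^2$, and the weight has the same definite sign as the essential spectrum, so it is positive. The discrete part splits into two directions. The direction $\partial_x\phi_{c_j}$ lies in the kernel. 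For the direction $\partial_c\phi_{c_j}$, I use Proposition~\ref{prop2.2} and the eigenvalue relation $\widehat{\mathcal{R}}(\phi)m_\phi=-\omega m_\phi$ to obtain
\[
\langle L_{n,j}\partial_c\phi,\partial_c\phi\rangle=(-1)^n\prod_{k\neq j}(\omega_k-\omega_j)\,\langle L_1\partial_c\phi,\partial_c\phi\rangle .
\]
Here $\langle L_1\partial_c\phi,\partial_c\phi\rangle=-\partial_c\omega\cdot\frac12\partial_cE_0$ up to sign, and it has a definite sign by $\frac{dE_0}{dc}=2\sqrt{c/(c-\gamma)}>0$. The cross terms between the discrete and continuous parts vanish by the orthogonality behind the closure relation \eqref{close}. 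Hence $n(L_{n,j})\in\{0,1\}$, and $n(L_{n,j})=1$ exactly when this scalar is negative. This scalar agrees, up to the positive factor $\omega_j^{-2}$, with the diagonal entry $\Lambda_{jj}$ from the proof of Lemma~\ref{lem2.1}.

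The last step is counting. Since $\omega_1>\dots>\omega_n$, the sign of $\prod_{k\neq j}(\omega_k-\omega_j)$ is $(-1)^{j-1}$. So the signs of the scalars for $j=1,\dots,n$ strictly alternate, and the sign for $j=1$ is fixed by the sign of $\langle L_1\partial_c\phi,\partial_c\phi\rangle$ together with $(-1)^n$. With the convention chosen in $\mathcal{S}_n$, exactly $[\frac{n+1}{2}]$ of the $j$'s give a negative value, which matches $p(D_n)$ in Lemma~\ref{lem2.1}. For the kernel, Proposition~\ref{prop3.4} gives $\ker\mathcal{J}L_{n,j}=\mathrm{span}\{\partial_x\phi_{c_j}\}$, and $\mathcal{J}$ is injective on decaying functions, so the kernel of each $L_{n,j}$ is one-dimensional. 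Summing over $j$ yields $z(\mathcal{L}_n)=n$.

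I expect two steps to be the main obstacle. The first is justifying that the cross terms between the continuous-spectrum component and $\partial_c\phi$ truly vanish in the quadratic form. The second is justifying that the weight on the continuous part has a uniform sign over all $k\in\mathbb{R}$. For both I would lean on the explicit eigen-relations \eqref{3.19}–\eqref{3.24} and the factorization, which convert $L_{n,j}$ on each basis element into scalar multiplication. I would also argue that the limit $t\to\infty$ preserves both the count of negative eigenvalues and the kernel dimension. This follows by the standard separation argument: eigenvalues cannot cross zero because the kernel dimension is constant, equal to $n$, by translation symmetry.
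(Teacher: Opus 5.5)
Your route is the paper's: pass to the decoupled limit, analyze $\langle L_{n,j}z,z\rangle$ in the decomposition \eqref{decom}, and reduce everything to the sign of $\langle L_{n,j}\partial_c\phi_{c_j},\partial_c\phi_{c_j}\rangle$, which is Proposition~\ref{prop3.8}. The gap is in the only step that produces the number $[\frac{n+1}{2}]$. The signs alternate in $j$, so the count is $[\frac{n+1}{2}]$ or $[\frac n2]$ depending on the sign at a single $j$. That sign is exactly what you leave ``up to sign'' and then settle by appeal to ``the convention chosen in $\mathcal{S}_n$''; it has to be computed. Differentiating $E_1'(\phi_c)+\omega E_0'(\phi_c)=0$ in $c$ gives $L_1\partial_c\phi=-(\partial_c\omega)E_0'(\phi)=\omega^2 m_\phi$. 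Hence $\langle L_1\partial_c\phi,\partial_c\phi\rangle=\frac{\omega^2}{2}\frac{dE_0}{dc}>0$, and your formula also loses a factor $2$. Since $\omega_1>\dots>\omega_n$, the negative factors of $\prod_{k\neq j}(\omega_k-\omega_j)$ are those with $k>j$, so its sign is $(-1)^{n-j}$, not $(-1)^{j-1}$. The scalar therefore has sign $(-1)^n(-1)^{n-j}=(-1)^j$. It is negative exactly for odd $j$, which gives $[\frac{n+1}{2}]$. Your pattern $(-1)^{n+j-1}$ yields the same total only by accident, since it is the reflection $j\mapsto n+1-j$. For even $n$ it puts the negative directions on the wrong solitons.

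Two other sign claims are wrong in ways that matter. First, since $dc/d\omega=-2/\omega^2<0$, one has $\langle L_{n,j}\partial_c\phi_{c_j},\partial_c\phi_{c_j}\rangle=-\frac{\omega_j^4}{4}\Lambda_{jj}$, which is not a positive multiple of $\Lambda_{jj}$. This sign reversal is precisely why $n(\mathcal{L}_n)=p(D_n)$. With a positive factor, as you wrote, you would get $n(\mathcal{L}_n)=n-p(D_n)=[\frac n2]$, contradicting your own conclusion. Second, for the essential spectrum the symbol of $L_{n,j}$ is $(-1)^n\,2(1+\xi^2)\prod_{k=1}^n\bigl(\omega_k-\frac{2}{\gamma}(1+\xi^2)\bigr)$ with $n$ negative factors, hence positive; your product $(-1)^n\cdot(-1)^n\cdot(-1)$ equals $-1$.

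There are also two smaller points. Proposition~\ref{prop3.4} is stated for $\widehat{\mathcal{R}}^{n-1}L_1$, not for $L_{n,j}=(-1)^n\prod_{k\neq j}(\widehat{\mathcal{R}}+\omega_k)L_1$. To transfer the kernel you must add that each factor $\widehat{\mathcal{R}}+\omega_k$ is injective, which holds because $-\omega_k\neq-\omega_j$ and $-\omega_k>-\frac{2}{\gamma}$. Translation symmetry gives only $z(\mathcal{L}_n)\ge n$, so it cannot by itself prevent eigenvalues from crossing zero. That time-invariance has to come from the Maddocks--Sachs iso-inertial property, as in the paper.
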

Recall that $ L_{n,j}=S_{n}^{\prime\prime}(\phi_{c_{j}}) $ is defined in (\ref{eq:factorization_LNj}). The spectrum of $ \mathcal{L}_{n} $ tends to the unions of $ \sigma(L_{n,j}) $, that is
\[
\sigma(\mathcal{L}_{n}) \to \bigcup_{j=1}^{n}\sigma(L_{n,j}) \quad \text{as} \quad t\to+\infty.
\]
The result (\ref{3.47}) follows directly from the following statement which concerning the inertia of the operators $ L_{n,j}$.

\begin{prop}\label{prop3.8}
	The second variation operator $L_{n,j}$ for $1\leq j\leq n$ possesses zero as a simple eigenvalue. Furthermore, its spectral inertia satisfies the following properties:
	\begin{itemize}
		\item[(i)] If $j$ is odd, $L_{n,j}$ has exactly one negative eigenvalue, i.e.,
		\[
		in(L_{n,j}) = (1,1).
		\]
		\item [(ii)] If $j$ is even, $L_{n,j}$ has no negative eigenvalues, i.e.,
		\[
		in(L_{n,j}) = (0,1).
		\]
	\end{itemize}
\end{prop}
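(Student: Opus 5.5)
The plan is to read off the inertia of $L_{n,j}$ by evaluating it on the two bases \eqref{set1}–\eqref{set2}, using the factorization \eqref{eq:factorization_LNj}. Write
\[
L_{n,j} = (-1)^n \prod_{k\neq j}\bigl(\widehat{\mathcal{R}}(\phi_{c_j})+\omega_k\bigr)\, L_1^{(j)}, \qquad L_1^{(j)} := E_1''(\phi_{c_j})+\omega_j E_0''(\phi_{c_j}).
\]
Since $L_{n,j}$ is self-adjoint, its quadratic form on $z$ is $\langle L_{n,j} z, z\rangle$. I expand $z$ as in \eqref{decom}, $z = \int \partial_x F^\pm \alpha^\pm\,dk + \beta\,\partial_x\phi + \eta\,\partial_c\phi$. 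Here $\phi=\phi_{c_j}$ and $\omega=\omega_j$.

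The first step uses the commutation relation \eqref{3.5}. Together with the eigenrelations \eqref{3.11}–\eqref{3.12} and \eqref{3.22}–\eqref{3.24}, it lets me compute $L_{n,j}$ on each basis element. On a continuous-spectrum element $\partial_x F^\pm(\cdot,k)$, we have $L_1\partial_xF^\pm = \varrho_1(\pm2\mu)(1-\partial_x^2)F^\pm$. The recursion factor then acts on $(1-\partial_x^2)F^\pm$ by the scalar $\prod_{k'\neq j}(\omega_{k'}-\lambda^2)$. Pairing with $\partial_xF^\pm$ through \eqref{3.28} turns the quadratic form on the continuous part into
\[
\int |\alpha^\pm(k)|^2\, w_j(k)\,dk,
\]
with a weight $w_j(k)$ that is an explicit product of signs. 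The key point is that $-\lambda^2\le -2/\gamma < -\omega_{k'}$ on the essential spectrum, because $\omega_{k'} = 2/c_{k'} < 2/\gamma$. Hence every factor $\omega_{k'}-\lambda^2$ is negative, and the product has the fixed sign $(-1)^{n-1}$. Combined with $(-1)^n$ and the sign of the $L_1$ symbol piece, this should make $w_j(k)>0$. So the continuous part contributes no negative directions.

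The second step handles the two-dimensional discrete block. We have $L_{n,j}\partial_x\phi = 0$, which gives the simple zero eigenvalue, and it is simple because $\ker \mathcal{J}L_n=\mathrm{span}\{\phi_x\}$ by Proposition \ref{prop3.4}. On $\partial_c\phi$, the relation $L_1\partial_c\phi = \omega^2 m_\phi$ (up to the sign in \eqref{3.24}) and $\widehat{\mathcal{R}}m_\phi=-\omega m_\phi$ give
\[
L_{n,j}\,\partial_c\phi = (-1)^n\,\sigma\,\omega^2 \prod_{k\neq j}(\omega_k-\omega_j)\, m_\phi ,
\]
where $\sigma$ is that sign. Pairing with $\partial_c\phi$ and using $\int \partial_c\phi\, m_\phi = \sqrt{c/(c-\gamma)}>0$ shows that the sign of $\langle L_{n,j}\partial_c\phi,\partial_c\phi\rangle$ is that of $(-1)^n\prod_{k\neq j}(\omega_k-\omega_j)$. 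Because $\omega_1>\dots>\omega_n$, exactly $j-1$ of the factors $\omega_k-\omega_j$ are positive and $n-j$ are negative. The sign is therefore $(-1)^{n}(-1)^{n-j}=(-1)^j$, up to the fixed convention $\sigma$. This matches the alternating diagonal $\Lambda_{jj}$ of Lemma \ref{lem2.1}, and it is negative precisely when $j$ is odd.

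The cross terms vanish by the orthogonality in \eqref{3.28} and the closure relation \eqref{close}. The form is thus diagonal in $(\alpha,\beta,\eta)$, and the Courant min–max principle gives $in(L_{n,j})=(1,1)$ for $j$ odd and $(0,1)$ for $j$ even. The main obstacle is the sign bookkeeping. I must check that the overall sign of the continuous weight $w_j$ is positive for all $j$, which needs the symbol $\rho_1(\xi)<0$ on the relevant range since $c\le 2\gamma$. I must also check that it is the same convention $\sigma$ that fixes odd $j$ as negative. I would first verify both at $n=1$, where $L_{1,1}=-L_1$ must have one negative eigenvalue by the single-soliton result, and then propagate the signs through the product.
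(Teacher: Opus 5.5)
Your proposal is correct and follows essentially the paper's own route: expand $z$ as in \eqref{decom}, use the factorization \eqref{eq:factorization_LNj} to show that the continuous part of $\langle L_{n,j}z,z\rangle$ is positive, note that the cross terms vanish and that $\partial_x\phi$ spans the kernel, and then read off the sign $(-1)^j$ in the scaling direction from $L_{n,j}\partial_c\phi=\Gamma_j m_\phi$ with $\Gamma_j=(-1)^n\omega_j^2\prod_{k\neq j}(\omega_k-\omega_j)$. Your remark that every factor $\omega_{k'}-\lambda^2$ is negative because $\lambda^2\ge 2/\gamma>\omega_{k'}$ supplies the justification the paper leaves implicit in its brief appeal to the $(-1)^n$ parity correction, and the remaining loose ends are cosmetic: $\sigma=+1$ follows directly from differentiating $E_1'(\phi_c)+\omega E_0'(\phi_c)=0$ in $c$; $\rho_1<0$ uses $c>\gamma$ rather than $c\le 2\gamma$; and simplicity of the zero eigenvalue should be read off from your diagonalized form (or from $\ker L_1$ together with the injectivity of $\prod_{k\neq j}(\widehat{\mathcal{R}}(\phi_{c_j})+\omega_k)$), not from Proposition~\ref{prop3.4}, which concerns $\mathcal{J}L_n$ and not $L_{n,j}$.
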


\begin{proof}
	We consider the operator $ L_{n,j}= \mathcal{S}_{n}^{\prime\prime}(\phi_{c_{j}}) $ for $1\leq j\leq n$ and compute the quadratic form $\langle L_{n,j}z,z\rangle$ under the special decomposition of $z$ given in (\ref{decom}) utilizing expansion coefficients $\alpha^{\pm}(k)$, $\beta$, and $\eta$. Recall that the globally corrected Lyapunov functional introduces a $(-1)^n$ parity factor. Thus, $L_{n,j}$ admits a factorization related to the operator $(-1)^n(E_{n}^{\prime\prime}(\phi_{c_{j}})+\omega_{j}E_{n-1}^{\prime\prime}(\phi_{c_{j}}))$. Applying this operator to the scaling direction and utilizing the identity $(E_1'' + \omega_j E_0'') \frac{\partial \phi_{c_j}}{\partial c_j} = \omega_j^2 m_{\phi_{c_j}}$ yields
	\begin{align}\label{3.48}
		L_{n,j}\frac{\partial \phi_{c_{j}}}{\partial c_{j}} = (-1)^n \omega_j^2 \prod_{k\neq j}^{n}(\omega_{k}-\omega_{j}) m_{\phi_{c_{j}}} := \Gamma_{j} m_{\phi_{c_{j}}}.
	\end{align}
	The quadratic form $\langle L_{n,j}z,z\rangle$ can be evaluated as
	\begin{align*}
		\langle L_{n,j}z,z\rangle
		&= \Bigg\langle \int_\mathbb{R} \alpha^\pm(k)L_{n,j}\partial_{x}F^\pm(x,k) \,\text{d}k, \int_\mathbb{R} \alpha^\pm(k)\partial_{x}F^\pm(x,k) \,\text{d}k \Bigg\rangle \\
		&\quad + 2\eta \Bigg\langle \int_\mathbb{R} \alpha^\pm(k)L_{n,j}\partial_{x}F^\pm(x,k) \,\text{d}k, \frac{\partial \phi_{c_{j}}}{\partial c_{j}} \Bigg\rangle + \eta^{2} \Bigg\langle L_{n,j}\frac{\partial \phi_{c_{j}}}{\partial c_{j}}, \frac{\partial \phi_{c_{j}}}{\partial c_{j}} \Bigg\rangle.
	\end{align*}
	Because of the $(-1)^n$ global parity correction, the principal symbol of $L_{n,j}$ evaluated at the continuous spectrum $\pm2\mu$ is now strictly positive. Then the first term of the quadratic form $\langle L_{n,j}z,z\rangle$ is nonnegative and equals zero if and only if $\alpha^\pm(k)=0$. The cross term vanishes due to orthogonality.
	
	To determine the sign of the third term, we analyze the coefficient $\Gamma_j$. Since the wave speeds are strictly ordered as $c_1 < c_2 < \dots < c_n$, their corresponding inverse variables satisfy $\omega_1 > \omega_2 > \dots > \omega_n > 0$. In the product $\prod_{k\neq j}^{n}(\omega_{k}-\omega_{j})$, there are exactly $n-j$ negative terms where $k > j$. Consequently, the sign of the product is $(-1)^{n-j}$. Since $\omega_j^2 > 0$, the overall sign of $\Gamma_j$ is exactly $(-1)^n \times (-1)^{n-j} = (-1)^{2n-j} = (-1)^j$.
	
	If $j$ is even, then $\Gamma_{j}>0$. We investigate $z$ in $H_{ev}^{n+1}(\mathbb{R})$ and $H_{od}^{n+1}(\mathbb{R})$ respectively. If $z\in H_{od}^{n+1}(\mathbb{R})$, the even projection coefficient must be zero with $\eta=0$. Then one has $\langle L_{n,j}z,z\rangle \geq 0$, and $\langle L_{n,j}z,z\rangle=0$ if and only if $\alpha^\pm(k)=0$. This yields $z=\beta \partial_x\phi_{c_{j}}$ with $\beta\neq 0$, indicating that zero is a simple eigenvalue. If $z\in H_{ev}^{n+1}(\mathbb{R})$, then $\beta=0$. In the hyperplane $\eta=0$, $\langle L_{n,j}z,z\rangle \geq 0$ and $\langle L_{n,j}z,z\rangle=0$ if and only if $\alpha^\pm(k)=0$. Therefore, $\langle L_{n,j}z,z\rangle > 0$ in the hyperplane $\eta=0$, which implies that $L_{n,j}$ can have at most one negative eigenvalue. Since $\Gamma_j > 0$ and $\langle m_{\phi_{c_j}}, \frac{\partial \phi_{c_j}}{\partial c_j} \rangle = \frac{1}{2} \frac{\mathrm{d}E_0(\phi_{c_j})}{\mathrm{d}c_j} > 0$, it follows that
	\begin{align}\label{n=1_even}
		\Bigg\langle L_{n,j}\frac{\partial \phi_{c_{j}}}{\partial c_{j}}, \frac{\partial \phi_{c_{j}}}{\partial c_{j}} \Bigg\rangle = \Gamma_{j}\Bigg\langle m_{\phi_{c_{j}}}, \frac{\partial \phi_{c_{j}}}{\partial c_{j}} \Bigg\rangle = \Gamma_{j}\frac{1}{2}\frac{\mathrm{d}E_{0}(\phi_{c_{j}})}{\mathrm{d}c_{j}} > 0.
	\end{align}
	Thus, $L_{n,j}$ is non-negative and has no negative eigenvalues.
	
	If $j$ is odd, then $\Gamma_{j}<0$. Following the same decomposition, $L_{n,j}$ is positive definite on the codimension-1 hyperplane $\eta=0$. However, along the orthogonal direction $\frac{\partial \phi_{c_{j}}}{\partial c_{j}}$, we have
	\begin{align}\label{n=1_odd}
		\Bigg\langle L_{n,j}\frac{\partial \phi_{c_{j}}}{\partial c_{j}}, \frac{\partial \phi_{c_{j}}}{\partial c_{j}} \Bigg\rangle = \Gamma_{j}\frac{1}{2}\frac{\mathrm{d}E_{0}(\phi_{c_{j}})}{\mathrm{d}c_{j}} < 0.
	\end{align}
	Therefore, $L_{n,j}$ has exactly one negative eigenvalue.
	
	Summing the negative eigenvalues for all $j \in \{1, \dots, n\}$, we count exactly the number of odd integers in this range, which is exactly $\left[ \frac{n+1}{2} \right]$. This perfectly matches the positive inertia index $p(D_n)$ of the Hessian matrix, establishing the structural stability condition and completing the proof of Proposition \ref{prop3.8}.
\end{proof}

\begin{proof}[Proof of Lemma \ref{prop3.7}]
From the invariance of inertia of $\mathcal{L}_n$, we know that
\[
in(\mathcal{L}_{n}) = (n(\mathcal{L}_{n}), z(\mathcal{L}_{n})) = \sum_{j=1}^{n} in(\mathcal{L}_{n,j}) = \left( \left[ \frac{n+1}{2} \right], n \right).
\]
The proof is concluded.
\end{proof}

\section{Proof of the stability results}\label{sec.4}
This section is devoted to the proof of the stability theorems of this work. Specifically, by verifying the critical Maddocks-Sachs spectral condition $\mathrm{in}(\mathcal{L}_N) = p(D_N)$, we establish the dynamical stability of the mCH $N$-soliton solutions. Furthermore, by employing a generalized G{\aa}rding inequality \cite{EVEN} and a refined modulation analysis, we rigorously prove the orbital stability in the optimal regularity space $H^{N+1}(\mathbb{R})$.

\subsection{Dynamical stability of mCH $N$-soliton solutions}
In this subsection, we complete the proof of Theorem \ref{th1.1}. The proof depends on the following fundamental result originally formulated by Maddocks and Sachs \cite{MS}.

\begin{prop}\label{prop4.1}
	Assuming the critical spectral condition
	\begin{equation}
		in(\mathcal{L}_N) = p(D_N)
	\end{equation}
	holds, then for some sufficiently large constant $C > 0$, the multi-soliton $U^{(N)}$ constitutes a non-degenerate unconstrained minimizer of the augmented Lagrangian
	\begin{equation}\label{eq:augmented_L}
		\Delta_N(u) := \mathcal{S}_N(u) + \frac{C}{2} \sum_{j=0}^{N-1} \left(E_j(u) - E_j(U^{(N)})\right)^2.
	\end{equation}
	As a consequence, the $N$-dimensional manifold of all $N$-soliton profiles $\mathcal{M}_N$ is dynamically stable in the sense of Theorem \ref{th1.1}.
\end{prop}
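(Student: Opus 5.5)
The argument follows the Maddocks--Sachs strategy \cite{MS} in two stages. First we show that $U^{(N)}$ is a strict local minimizer of $\Delta_N$ modulo its symmetry directions. Then we use the conservation of $\Delta_N$ along the mCH flow to control the $H^{N+1}$ distance to the soliton manifold.

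\textbf{Step 1: first variation.} We note that $\Delta_N'(U^{(N)})=\mathcal{S}_N'(U^{(N)})=0$. This follows from the Euler--Lagrange equation \eqref{1.20_N}, because each penalty term $(E_j(u)-E_j(U^{(N)}))^2$ has vanishing first variation at $u=U^{(N)}$. The second variation is
\[
\Delta_N''(U^{(N)})=\mathcal{L}_N+C\sum_{j=0}^{N-1}\langle E_j'(U^{(N)}),\cdot\rangle E_j'(U^{(N)}).
\]

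\textbf{Step 2: positivity of $\Delta_N''$.} We show that this operator is nonnegative with kernel exactly the $N$-dimensional tangent space to $\mathcal{M}_N$, namely the directions $\partial_{x_i}U^{(N)}$. The perturbation is a rank-$N$ positive correction. By a min--max (Courant--Fischer) argument, it removes exactly as many negative directions of $\mathcal{L}_N$ as there are positive eigenvalues of the matrix
\[
\bigl(\langle \mathcal{L}_N^{-1}E_i',E_j'\rangle\bigr)_{i,j}.
\]
This matrix equals $-D_N$ up to sign conventions. To see this, differentiate the Euler--Lagrange equation with respect to $\mu_i$: this gives $\mathcal{L}_N\,\partial_{\mu_i}U^{(N)}=-(-1)^N E_i'(U^{(N)})$, and hence
\[
\langle \mathcal{L}_N^{-1}E_i',E_j'\rangle=-\partial_{\mu_i}\partial_{\mu_j}\mathcal{S}_N .
\]
The hypothesis $in(\mathcal{L}_N)=p(D_N)$, which holds by Lemma \ref{prop3.7} and Lemma \ref{lem2.1}, then implies that all $[\tfrac{N+1}{2}]$ negative directions are eliminated once $C$ exceeds the inverse of the smallest relevant eigenvalue. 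The kernel $\{\partial_{x_i}U^{(N)}\}$ of $\mathcal{L}_N$ is untouched, since $\langle E_j',\partial_{x_i}U^{(N)}\rangle=0$ by translation invariance. To upgrade nonnegativity to coercivity we use Lemma \ref{lem:variational_derivative_order}. The principal symbol of $\mathcal{L}_N$ has order $2(N+1)$ and is positive after the factor $(-1)^N$, and its essential spectrum is bounded below by a positive constant. A Gårding inequality then yields
\[
\langle\Delta_N''(U^{(N)})z,z\rangle\ge \kappa\|z\|_{H^{N+1}}^2
\]
for $z$ that are $L^2$-orthogonal to the kernel directions.

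\textbf{Step 3: nonlinear estimate and dynamics.} We Taylor expand $\Delta_N$ on $H^{N+1}$ around the manifold. The remainder is cubic, and it is controlled because the densities $\mathcal{E}_k$ are smooth in $u,\dots,u^{(N+1)}$ with $M\ge\sqrt{\gamma/2}$, using Theorem \ref{thm:domain_conservation_laws} and the embedding $H^{N+1}\hookrightarrow W^{N,\infty}$. Choosing the nearest point of $\mathcal{M}_N$ by an implicit-function modulation in the phases $\mathbf{x}$ gives, for $u$ near $\mathcal{M}_N$,
\[
\Delta_N(u)-\Delta_N(U^{(N)})\ge \tfrac{\kappa}{4}\operatorname{dist}_{H^{N+1}}(u,\mathcal{M}_N)^2 .
\]
Since every $E_j$ is conserved, $\Delta_N(u(t))=\Delta_N(u_0)$, and the value of $\Delta_N$ is constant along $\mathcal{M}_N$. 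Continuity of $\Delta_N$ on $H^{N+1}$ makes $\Delta_N(u_0)-\Delta_N(U^{(N)})$ small when $\delta$ is small. A continuity argument, using the local well-posedness result of Gui, Liu, Olver and Qu \cite{GLO} with $N+1>5/2$ for $N\ge2$, keeps the distance below $\varepsilon$ for all time. The same bound prevents blow-up of the $H^{N+1}$ norm, which gives global existence.

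\textbf{Main obstacle.} The hardest step is Step 2, and specifically justifying that the finite-dimensional matrix controlling the negative directions is exactly $D_N$ on the nose. This requires $\mathcal{L}_N$ to be invertible on the range of the $E_j'$ and requires the $t$-independence of inertia, since Lemma \ref{lem2.1} was computed only in the asymptotically decoupled regime. My first attempt would be to work in that regime, where $D_N$ and $\mathcal{L}_N$ split blockwise over the $N$ solitons. There I would verify the count soliton by soliton using \eqref{3.48}, and then transfer the count to finite $t$ by the isoinertial invariance of \cite{MS}.
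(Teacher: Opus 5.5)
Your route is the paper's. The paper justifies Proposition~\ref{prop4.1} by appeal to Maddocks--Sachs, and Section~\ref{sec.4} then runs your program: conservation of $\Delta_N$, G{\aa}rding's inequality for the order-$2(N+1)$ operator, modulation onto $\mathcal{M}_N$, and continuity. Your Step 2 is the sounder version of the key step. You differentiate the Euler--Lagrange equation to get $\mathcal{L}_N\partial_{\mu_i}U^{(N)}=-(-1)^NE_i'(U^{(N)})$, and then identify $D':=(\langle\mathcal{L}_N^{-1}E_i',E_j'\rangle)_{i,j}$ with $-D_N$; this is exactly how the hypothesis $in(\mathcal{L}_N)=p(D_N)$ enters. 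The paper instead uses \eqref{ell_estimate}, which would require $X_{neg}\subset\mathrm{span}\{E_j'(U^{(N)})\}$, and that is not available.

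Two points need repair.

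First, your count has the wrong sign. The constraints remove as many negative directions as $D'$ has \emph{negative} eigenvalues, not positive ones. Check with $L=\mathrm{diag}(-1,1)$ and $g=e_1$: then $L|_{g^\perp}>0$ while $\langle L^{-1}g,g\rangle=-1$. Precisely, $n(\mathcal{L}_N|_{\{E_j'\}^\perp})=n(\mathcal{L}_N)-n(D')-z(D')$ and $z(\mathcal{L}_N|_{\{E_j'\}^\perp})=z(\mathcal{L}_N)+z(D')$. With your correct identity $D'=-D_N$, the number removed is $n(D')=p(D_N)$, so the hypothesis closes the argument. It also forces $z(D')=0$, since $n(D')+z(D')\le n(\mathcal{L}_N)$ always holds. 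As you wrote it, ``positive eigenvalues of $D'$'' together with $D'=-D_N$ would instead require $n(\mathcal{L}_N)=n(D_N)$.

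Your stated main obstacle is partly illusory. $D_N=(-1)^N(\partial_{\mu_i}E_j(U^{(N)}))$ involves only conserved quantities, which depend neither on $t$ nor on the phases. So the decoupled value $E_j(U^{(N)})=\sum_kE_j(\phi_{c_k})$ holds exactly, and only the inertia of $\mathcal{L}_N(t)$ needs the isoinertial argument. The same phase-independence, not mere translation invariance, is what gives $\langle E_j',\partial_{x_i}U^{(N)}\rangle=0$.

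Second, the cubic Taylor remainder in Step 3 does not follow from smoothness of $\mathcal{E}_k$ plus $H^{N+1}\hookrightarrow W^{N,\infty}$, because $u^{(N+1)}$ is only in $L^2$. You need $\mathcal{E}_N$ to be exactly quadratic in $u^{(N+1)}$, with coefficients smooth in the lower derivatives. This holds for $E_2$, through the term $m_x^2/M^5$, but Theorem~\ref{thm:domain_conservation_laws} only identifies the order, not this structure. The cubic bound actually fails for $E_1$ on $H^2$, where $u_{xx}$ sits inside $M$; that is the $N=1$ case, which your well-posedness remark already excludes. You must also show that the coercivity constant is uniform over the non-compact family of relative phases in $\mathcal{M}_N$. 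The paper asserts both of these bounds without proof, so they are gaps you share with it rather than flaws in your choice of route.
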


\begin{proof}[Proof of Theorem \ref{th1.1}]
	By combining the matrix inertia computation in Lemma \ref{lem2.1} and the rigorous spectral decomposition of the linearized operator in Proposition \ref{prop3.8}, we have successfully established that
	\begin{equation}
		in(\mathcal{L}_N) = p(D_N) = \left[\frac{N+1}{2}\right].
	\end{equation}
	In view of Proposition \ref{prop4.1}, $U^{(N)}(t, x)$ serves as a (non-isolated) unconstrained minimizer of the augmented Lagrangian $\Delta_N(u)$ in \eqref{eq:augmented_L}, which therefore serves as a valid Lyapunov functional. The exact local coercivity bound and global continuation arguments detailed in the subsequent subsections then strictly complete the proof of the dynamical and orbital stability.
\end{proof}

\subsection{Orbital stability of mCH $N$-soliton solutions}
We now establish the orbital stability of the mCH $N$-solitons (Theorem \ref{th1.2}). Under the spectral condition $n(\mathcal{L}_N)=p(D_N)=\left[\frac{N+1}{2}\right]$, the generalized Maddocks-Sachs framework guarantees the existence of a sufficiently large constant $C>0$ such that the augmented Lagrangian
\begin{equation}\label{Delta}
	\Delta_N(u) = \mathcal{S}_N(u)+\frac{C}{2}\sum_{j=0}^{N-1}\bigl(E_j(u)-E_j(U^{(N)})\bigr)^2
\end{equation}
attains a non-degenerate unconstrained minimum at every translate of $U^{(N)}$. Orbital stability requires establishing three elements: the temporal conservation of $\Delta_N$, local coercivity near the $N$-soliton manifold, and a global continuation argument.

\vspace{1ex}\noindent\textbf{\textit{Conservation of $\Delta_N (u)$.}}
Since the Hamiltonians $\{E_j(u)\}_{j=0}^N$ are conserved along the flow of \eqref{mch}, the linear combination $\mathcal{S}_N(u)$ and consequently $\Delta_N(u(t))$ remain constant in time, serving as a valid Lyapunov functional.

\vspace{1ex}\noindent\textbf{\textit{Coercivity of \(\Delta_N (u)\) around the $N$-soliton manifold.}}
Let \(\mathcal{M}_N = \bigl\{U^{(N)}(\cdot\,;\mathbf{y}) \mid \mathbf{y} \in\mathbb{R}^N\bigr\}\) be the $N$-dimensional soliton manifold. For any \(u\) sufficiently close to \(\mathcal{M}_N\) in \(H^{N+1}(\mathbb{R})\), the implicit function theorem guarantees a unique spatial shift \(\mathbf{y}=(y_1,\dots,y_N)\) such that the perturbation \(w(x) = u(x)-U^{(N)}(x;\mathbf{y})\) satisfies the orthogonality conditions:
\begin{equation}\label{ortho}
	\langle w,\partial_{y_j}U^{(N)}\rangle_{L^2} = 0,\qquad j=1,\dots,N.
\end{equation}
Furthermore, \(\|w\|_{H^{N+1}} \le K\,\mathrm{dist}_{H^{N+1}}(u,\mathcal{M}_N)\) for some constant \(K>0\). Expanding \(\Delta_N\) around \(U^{(N)}\) yields:
\begin{equation}\label{Delta_expand}
	\Delta_N(u)-\Delta_N(U^{(N)}) = \frac{1}{2}\langle\mathcal{L}_N w,w\rangle+R(w) +\frac{C}{2}\sum_{j=0}^{N-1}\Bigl(\langle E_j'(U^{(N)}),w\rangle+Q_j(w)\Bigr)^2,
\end{equation}
where \(\mathcal{L}_N=\mathcal{S}_N''(U^{(N)})\). The higher-order remainders satisfy \(|R(w)| \le C_1\|w\|_{H^{N+1}}^3\) and \(|Q_j(w)| \le C_2\|w\|_{H^{N+1}}^2\).

\vspace{1ex}\noindent\textbf{\textit{G{\aa}rding inequality for \(\mathcal{L}_N\).}}
The $N$ iterations of the recursion operator $\widehat{\mathcal{R}}$ yield a leading differential part for $E_N''(U^{(N)})$ proportional to $-\partial_x^{2N+2}$. Incorporating the global $(-1)^N$ parity correction precisely adjusts the leading term of $\mathcal{L}_N$ to $(-1)^{N+1}\partial_x^{2N+2}$, ensuring strict ellipticity. Explicitly, $\mathcal{L}_N$ forms a strongly elliptic \(2(N+1)\)-th order self-adjoint operator. G{\aa}rding's inequality provides the lower bound:
\begin{equation*}
	\langle\mathcal{L}_N w,w\rangle \ge \delta\|\partial_x^{N+1}w\|_{L^2}^2 - C\|w\|_{H^N}^2.
\end{equation*}
Absorbing intermediate derivatives via interpolation \(\|w\|_{H^N}^2 \le \eta\|\partial_x^{N+1}w\|_{L^2}^2 + C_\eta\|w\|_{L^2}^2\) with sufficiently small \(\eta\) yields the final G{\aa}rding inequality:
\begin{equation}\label{Garding_final}
	\langle\mathcal{L}_N w,w\rangle \ge \gamma_1\|w\|_{H^{N+1}}^2 - \gamma_2\|w\|_{L^2}^2,
\end{equation}
with positive constants \(\gamma_1, \gamma_2\). Combined with the exact spectral structure (i.e., $[\frac{N+1}{2}]$ negative eigenvalues, an essential spectrum bounded away from zero, and an $N$-dimensional kernel $X_0$), standard compactness arguments (e.g., \cite{MS}) upgrade \eqref{Garding_final} to a full coercivity estimate on the positive subspace:
\begin{equation}\label{coercive_HN}
	\langle\mathcal{L}_N w,w\rangle \ge \gamma\|w\|_{H^{N+1}}^2 \qquad\text{for all } w \perp X_0 \text{ and } w \perp X_{neg}.
\end{equation}

\vspace{1ex}\noindent\textbf{\textit{Estimate of the constraint terms.}}
Let \(\ell_j(w)=\langle E_j'(U^{(N)}),w\rangle\). Since the gradients $\{E_j'(U^{(N)})\}$ uniquely span a subspace covering $X_{neg}$, there exists \(c_0>0\) such that
\begin{equation}\label{ell_estimate}
	\sum_{j=0}^{N-1}|\ell_j(w)|^2 \ge c_0\,\|P_{neg} w\|_{L^2}^2.
\end{equation}
For sufficiently small \(\|w\|_{H^{N+1}}\), binomial expansion gives:
\begin{equation}\label{constraint_expand}
	\bigl(\ell_j(w)+Q_j(w)\bigr)^2 \ge \frac{1}{2}|\ell_j(w)|^2-C_3\|w\|_{H^{N+1}}^4.
\end{equation}

\vspace{1ex}\noindent\textbf{\textit{Completion of the coercivity estimate.}}
Inserting \eqref{coercive_HN}, \eqref{ell_estimate}, and \eqref{constraint_expand} into \eqref{Delta_expand}, and utilizing $w \perp X_0$ from \eqref{ortho}, we obtain for small \(\|w\|_{H^{N+1}}\):
\begin{equation*}
	\Delta_N(u)-\Delta_N(U^{(N)}) \ge \frac{\gamma}{2}\|w\|_{H^{N+1}}^2+\frac{Cc_0}{4}\|P_{neg}w\|_{L^2}^2 -C_4\|w\|_{H^{N+1}}^3.
\end{equation*}
Choosing \(\|w\|_{H^{N+1}}\) small enough allows the quadratic terms to absorb the cubic remainder, yielding a uniform constant \(\alpha>0\) such that:
\begin{equation}\label{coercive_HN_final}
	\Delta_N(u)-\Delta_N(U^{(N)}) \ge \alpha\|w\|_{H^{N+1}}^2 \ge \frac{\alpha}{K^2}\,\mathrm{dist}_{H^{N+1}}(u,\mathcal{M}_N)^2.
\end{equation}

\vspace{1ex}\noindent\textbf{\textit{Orbital stability in \(H^{N+1}\).}}
For initial data satisfying \(\|u_0-U^{(N)}(0)\|_{H^{N+1}} < \delta\), we dynamically choose the modulation parameters \(\mathbf{y}(t)\) to continuously satisfy the orthogonality conditions \eqref{ortho}. By virtue of the temporal conservation of \(\Delta_N\) and the coercivity estimate \eqref{coercive_HN_final}, we obtain for all \(t \ge 0\):
\begin{align*}
	\alpha\|w(t)\|_{H^{N+1}}^2 &\le \Delta_N(u(t))-\Delta_N(U^{(N)})\le L\,\|u_0-U^{(N)}(0)\|_{H^{N+1}}^2,
\end{align*}
where \(L\) is the local Lipschitz constant for \(\Delta_N\). This yields the uniform bound \(\|w(t)\|_{H^{N+1}} \le C_0\delta\) with \(C_0=\sqrt{L/\alpha}\).

For any \(\varepsilon > 0\), by choosing \(\delta < \varepsilon / C_0\), we ensure that the perturbation remains strictly bounded by \(\varepsilon\) globally in time. Therefore, the solution \(u(t)\) permanently stays within the \(\varepsilon\)-tube of the \(N\)-soliton manifold \(\mathcal{M}_N\), which directly leads to the orbital stability:
\begin{equation*}
	\inf_{\mathbf{r}\in\mathbb{R}^N}\|u(t)-U^{(N)}(t;\cdot+r_1,\dots,\cdot+r_N)\|_{H^{N+1}} < \varepsilon , \qquad\forall t\ge 0.
\end{equation*}
This completes the proof of Theorem \ref{th1.2}. \hfill\(\square\)

\vspace{1.5ex}
\noindent{\bf Acknowledgments}\\
\indent This work was supported by the National Natural Science Foundation of China under Grant No. 12371255, the Fundamental Research Funds for the Central Universities of CUMT under Grant No. 2024ZDPYJQ1003, and the Postgraduate Research \& Practice Program of Education \& Teaching Reform of CUMT under Grant No. 2025YJSJG031.

\vspace{1.5ex}
\noindent{\bf Data availability}\\
No data was used for the research described in the article.
\vspace{1.5ex}

\noindent{\bf Declaration of competing interest}\\
 The authors declare no conflict of interest.
\bibliographystyle{plain}

\end{document}